\titleformat{\section}{\normalfont\sffamily\Large\bfseries}
  {\thesection}{1em}{}
\titleformat{\subsection}{\normalfont\sffamily\large\bfseries}
  {\thesubsection}{1em}{}
\titleformat{\subsubsection}{\normalfont\sffamily\bfseries}
  {\thesubsubsection}{1em}{}
\renewcommand{\phi}{\varphi}
\numberwithin{equation}{section}
\newcommand{\hypertargetraised}[1]{\Hy@raisedlink{\hypertarget{#1}{}}}
\crefname{equation}{Eq.}{Eqs.}
\Crefname{equation}{Equation}{Equations}
\newcommand\pr[1]{\cref{#1}}
\newcommand\prange[2]{\crefrange{#1}{#2}}
\newif\ifUseTikz
\pgfplotsset{compat=newest}
\pgfplotsset{plot coordinates/math parser=false}
\newlength\figureheight
\newlength\figurewidth
\tikzset{
    every picture/.style={
        execute at begin picture={
            \let\ref\@refstar
        }
    }
}
\definecolor{plt-blue}{rgb}{0.0078,0.2980,0.7961}%blue
\definecolor{plt-orange}{rgb}{1.0000,0.6431,0.2627}%orange
\definecolor{plt-purple}{rgb}{1.0000,0.2863,0.5255}%purple
\definecolor{plt-violet}{rgb}{0.6118,0.1765,1.0000}%violet
\newtheorem{lem}{Lemma}
\newcommand\abbrev[1]{\textsc{\small\uppercase{#1}}}
\newcommand\ordinal[1]{\ifthenelse{\isin{#1}{abcdefghijklmnopqrstuvwxyz}}{\ensuremath{#1^\mathrm{th}}}{\engordnumber{#1}}}
\newcommand\norm[2][{}]{\lVert #2 \rVert_{#1}}
\newcommand\cross[2]{#1\times #2}
\newcommand\dotprod[2]{#1\cdot #2}
\newcommand\bigO[1]{\ensuremath{\mathcal{O}\!\left(#1\right)}}
\newcommand{\D}[1]{\ensuremath{\,\mathrm{d}#1}}
\newcommand\pderiv[2]{{\partial_{#2} #1}}
\newcommand{\Intersection}{{\ensuremath{\cap}}}            % set intersection
\newcommand{\Real}{{\ensuremath{\mathbb{R}}}}              % set of real numbers
\newcommand{\Imag}{{\ensuremath{i}}}
\newcommand{\FFT}[1]{\ensuremath{\mathcal{F}{#1}}}
\newcommand{\IFFT}[1]{\ensuremath{\mathcal{F}^{-1}{#1}}}
\newcommand{\Transpose}[1]{\ensuremath{#1^{T}}}
\let\vector\undefined
\DeclareMathAlphabet{\mathbfsf}{\encodingdefault}{\sfdefault}{bx}{n}
\newcommand\discrete[1]{\ensuremath{\mathsf{#1}}}
\newcommand\fourier[1]{\ensuremath{\widehat{#1}}}
\newcommand\scalar[1]{\ensuremath{#1}}
\newcommand\scalard[1]{\ensuremath{\discrete{#1}}}
\newcommand\vector[1]{\ensuremath{\bm{#1}}}
\newcommand\vct[1]{\vector{#1}}
\newcommand\vectord[1]{\ensuremath{\mathbfsf{#1}}}
\newcommand\linop[1]{\ensuremath{\mathbf{#1}}}
\newcommand\linopd[1]{\ensuremath{\mathbfsf{#1}}}
\newcommand\conv[1]{\ensuremath{\mathcal{#1}}}
\newcommand\restr[2]{{\left.\kern-\nulldelimiterspace #1 \vphantom{\big|} \right|_{#2} }}
\newcommand\latinabbrev[1]{
  \peek_meaning:NTF . {% Same as \@ifnextchar
    #1\@}%
  { \peek_catcode:NTF a {% Check whether next char has same catcode as \'a, i.e., is a letter
      #1.\@ }%
    {#1.\@}}}
\def\ie{\latinabbrev{i.e}}
\newcommand{\Grad}[1]{\ensuremath{\nabla #1}}
\newcommand{\Div}[1]{\ensuremath{\dotprod{\nabla}{#1}}}
\newcommand{\Curl}[1]{\ensuremath{\cross{\nabla}{#1}}}
\newcommand{\Lap}[1]{\ensuremath{\Delta #1}}
\newcommand{\SurfGrad}[1]{\ensuremath{\nabla_{\Gamma} #1}}
\newcommand{\SurfDiv}[1]{\ensuremath{\dotprod{\nabla_{\Gamma}}{#1}}}
\newcommand{\SurfLap}[1]{\ensuremath{\Delta_{\Gamma} #1}}
\newcommand{\InvSurfLap}[1]{\ensuremath{\Delta_{\Gamma}^{-1} #1}}
\newcommand{\dSurfGrad}[1]{\ensuremath{\nabla_{\Gamma_h} #1}}
\newcommand{\dSurfDiv}[1]{\ensuremath{\dotprod{\nabla_{\Gamma_h}}{#1}}}
\newcommand{\dSurfLap}[1]{\ensuremath{\Delta_{\Gamma_h} #1}}
\newcommand{\HelmKer}[1]{\ensuremath{g_{#1}}}
\newcommand{\SL}[1]{\ensuremath{\conv{S}_{#1}}}
\newcommand{\DL}[1]{\ensuremath{\conv{D}_{#1}}}
\newcommand{\Domain}{{\ensuremath{\Omega}}}
\newcommand{\Boundary}{{\ensuremath{\Gamma}}}
\newcommand{\CrossSection}{{\ensuremath{S}}}
\newcommand{\CurrentLoop}{{\ensuremath{C}}}
\newcommand{\Wavenumber}{{\ensuremath{k}}}
\newcommand{\BeltramiParam}{{\ensuremath{\lambda}}}
\newcommand{\AreaElem}{{\ensuremath{\D{a}}}}
\newcommand{\VecAreaElem}{{\ensuremath{\D{\vector{a}}}}}
\newcommand{\VecLengthElem}{{\ensuremath{\D{\vector{l}}}}}
\newcommand{\Flux}{{\ensuremath{\Phi}}}
\newcommand{\TAngle}{{\ensuremath{\theta}}}
\newcommand{\PAngle}{{\ensuremath{\phi}}}
\newcommand{\MeanCurv}{{\ensuremath{H}}}
\newcommand{\Normal}{{\vector{n}}}
\newcommand{\SCoord}{{\vector{x}}}
\newcommand{\MetricTensor}{{\ensuremath{G}}}
\newcommand{\detMetricTensor}{{\ensuremath{|\linop{G}|}}}
\newcommand{\detMetricTensord}{{\ensuremath{|\linopd{G}|}}}
\DeclareMathOperator{\sinc}{sinc}
\newcommand{\tb}{{\abbrev{TB}}}                                  % TB
\newcommand{\ghz}{\ensuremath{\mathrm{GHz}}}                     % GHz
\newcommand{\nexp}{{\ensuremath{\text{\sc{\small e-}}}}}         % Exponent notation
\newcommand{\pexp}{{\ensuremath{\text{\sc{\small e+}}}}}         % Exponent notation
\newcommand{\Time}{{\ensuremath{\text{T}}}}                      % wall-time
\newcommand{\Tsolve}{{\ensuremath{\Time_{solve}}}}
\newcommand{\Tsetup}{\ensuremath{\Time_{setup}}}
\newcommand{\Tsmooth}{\ensuremath{\Time_{smooth}}}
\newcommand{\Tsingular}{\ensuremath{\Time_{singular}}}
\newcommand{\Tquadeval}{\ensuremath{\Time_{quad-eval}}}
\newcommand{\Tlb}{\ensuremath{\Time_{LB}}}
\newcommand{\Nsurf}{{\ensuremath{N_{s}}}}                        % number of toroidal surfaces
\newcommand{\Nunknown}{\ensuremath{N}}                           % number of unknowns
\newcommand{\Nt}{\ensuremath{N_\TAngle}}                         % discretization order in toroidal direction
\newcommand{\Np}{\ensuremath{N_\PAngle}}                         % discretization order in poloidal direction
\newcommand{\htor}{\ensuremath{h_\TAngle}}                       % discretization size in toroidal direction
\newcommand{\hpol}{\ensuremath{h_\PAngle}}                       % discretization size in poloidal direction
\newcommand{\patchdim}{\ensuremath{M}}                           % patch dimension
\newcommand{\quadorder}{\ensuremath{q}}                          % quadrature order
\newcommand{\gmrestol}{\ensuremath{\epsilon_{\textsc{gmres}}}}   % GMRES tolerance
\newcommand{\gmresiter}{\ensuremath{N_{\textsc{gmres}}}}         % number of GMRES iterations
\newcommand{\lbiter}{\ensuremath{N_{LB}}}
\DeclareDocumentCommand{\nm}{O{n} O{m}}{_{#1#2}}
\newcommand{\fftdiff}[2]{\ensuremath{\linop{D}_{#2} #1}}       % pseudo-spectral differentiation operator
\newcommand{\pou}{\ensuremath{\eta}}                             % partition-of-unity function
\newcommand{\fnsupp}[1]{\ensuremath{B_{#1}}}                      % support of a function
\title{\bf\sffamily Taylor States in Stellarators: A Fast High-order
  Boundary Integral Solver}
\author{Dhairya Malhotra\footnote{Courant Institute of Mathematical
    Sciences, New York University, 251 Mercer St, New York, NY
  10012. \emph{Email}: \texttt{malhotra@cims.nyu.edu}. Research
  supported in part by the Office of Naval Research under award number
  \#N00014-17-1-2451, the Simons Foundation/SFARI (560651, AB),
  and the U.S. Department of Energy, Office of Science, Fusion Energy Sciences under Awards No. DE-
FG02-86ER53223 and DE-SC0012398}, \, 
Antoine Cerfon\footnote{Courant Institute of Mathematical
    Sciences, New York University, 251 Mercer St, New York, NY
  10012. \emph{Email}: \texttt{cerfon@cims.nyu.edu}. Research partially supported by
  the Simons Foundation/SFARI (560651, AB),
  and by the U.S. Department of Energy, Office of Science, Fusion Energy Sciences under Awards No. DE-FG02-86ER53223 and DE-SC0012398}, \,
Lise-Marie Imbert-G\'erard\footnote{Department of Mathematics,
  University of Maryland, 4176 Campus Drive, College Park, MD 20742.
  \emph{Email}: \texttt{lmig@math.umd.edu}. Research
  supported in part by the Simons Foundation/SFARI (560651, AB),
  by the Applied Mathematical Sciences Program of the U.S. Department of Energy under
contract DEFGO288ER25053 and by the Office of the Assistant Secretary of Defense for Research and Engineering and
AFOSR under NSSEFF program award FA9550-10-1-0180.}, \\
and Michael O'Neil\footnote{Courant Institute of Mathematical
    Sciences, New York University, 251 Mercer St, New York, NY
  10012. \emph{Email}: \texttt{oneil@cims.nyu.edu}. Research
  supported in part by the Office of Naval Research under award numbers
  \#N00014-17-1-2451 and \#N00014-17-1-2059.}
}
\date{\today}
\begin{document}

\maketitle

\begin{abstract}
  We present a boundary integral equation solver for computing Taylor
relaxed states in non-axisymmetric solid and shell-like toroidal
geometries. The computation of Taylor states in these geometries is a
key element for the calculation of stepped pressure stellarator equilibria. The integral representation of the magnetic field in this work is based on the generalized Debye source formulation, and results in a
well-conditioned second-kind boundary integral equation. The integral
equation solver is based on a spectral discretization of the geometry
and unknowns, and the computation of the associated weakly-singular
integrals is performed with high-order quadrature based on a partition
of unity. The resulting scheme for applying the integral operator is
then coupled with an iterative solver and suitable preconditioners.
Several numerical examples are provided to demonstrate the accuracy and
efficiency of our method, and a direct comparison with the leading
code in the field is reported.

    \textbf{Keywords}: Taylor state, stellarator, generalized Debye
    sources, plasma equilibria, Laplace-Beltrami
\end{abstract}

\section{Introduction}
The computation of magnetohydrodynamic (MHD) equilibria in toroidal
domains without axisymmetry is a notoriously challenging problem,
having both computational roadblocks as well as touching on
subtle mathematical questions~\cite{grad67,Bruno1996,Hudson2010,HelanderReview}. Until
recently, computational efforts to solve this problem could be divided
in two categories~\cite{Harafuji1989}. The first category of numerical
solvers relies on the assumption of the existence of nested magnetic
flux surfaces throughout the computational
domain~\cite{Bauer1978,Hirshman_VMEC1,Hirshman_VMEC2,Hirshman_VMEC3,Taylor1994}.
These solvers have played an important role in the design of new
non-axisymmetric magnetic confinement devices as well as the analysis
of experimental results obtained from existing ones. However, they are
fundamentally limited in terms of both robustness and accuracy for the
computation of equilibria with both a smooth plasma pressure profile
and smooth magnetic field line pitch. In this regime, this class of
solvers (and the model upon which they are based) is unable to
accurately approximate the singular structures in the current density
which must naturally occur in such
situations~\cite{HelanderReview,Loizu2015,ReimanComparison,Loizu2016}.
On the other hand, an alternative, second class of solvers, does not constrain the
space of solutions to equilibria with nested flux surfaces, and
computes equilibria which may have magnetic islands and chaotic
magnetic field
lines~\cite{Reiman1988,Harafuji1989,Suzuki_Hint2}. These solvers also
play an important role in the magnetic fusion program since they can
be used to study, among other significant questions, the disappearance
of magnetic islands, often called \emph{island healing}, corresponding
to an increase of the plasma pressure~\cite{Hayashi1994,Kanno2005} or
to a change of the coil configuration~\cite{Hudson2002}. They are also
often able to compute the details of the magnetic field configuration
in the vicinity of the plasma edge~\cite{Drevlak2005}. Despite these
additional capabilities, equilibrium codes in the first category are
often favored because existing solvers in the second category converge
substantially slower~\cite{Drevlak2005} and are much more
computationally intensive~\cite{Hudson2007}.

\begin{figure}[t!]
  \centering
  \includegraphics[width=0.5\textwidth]{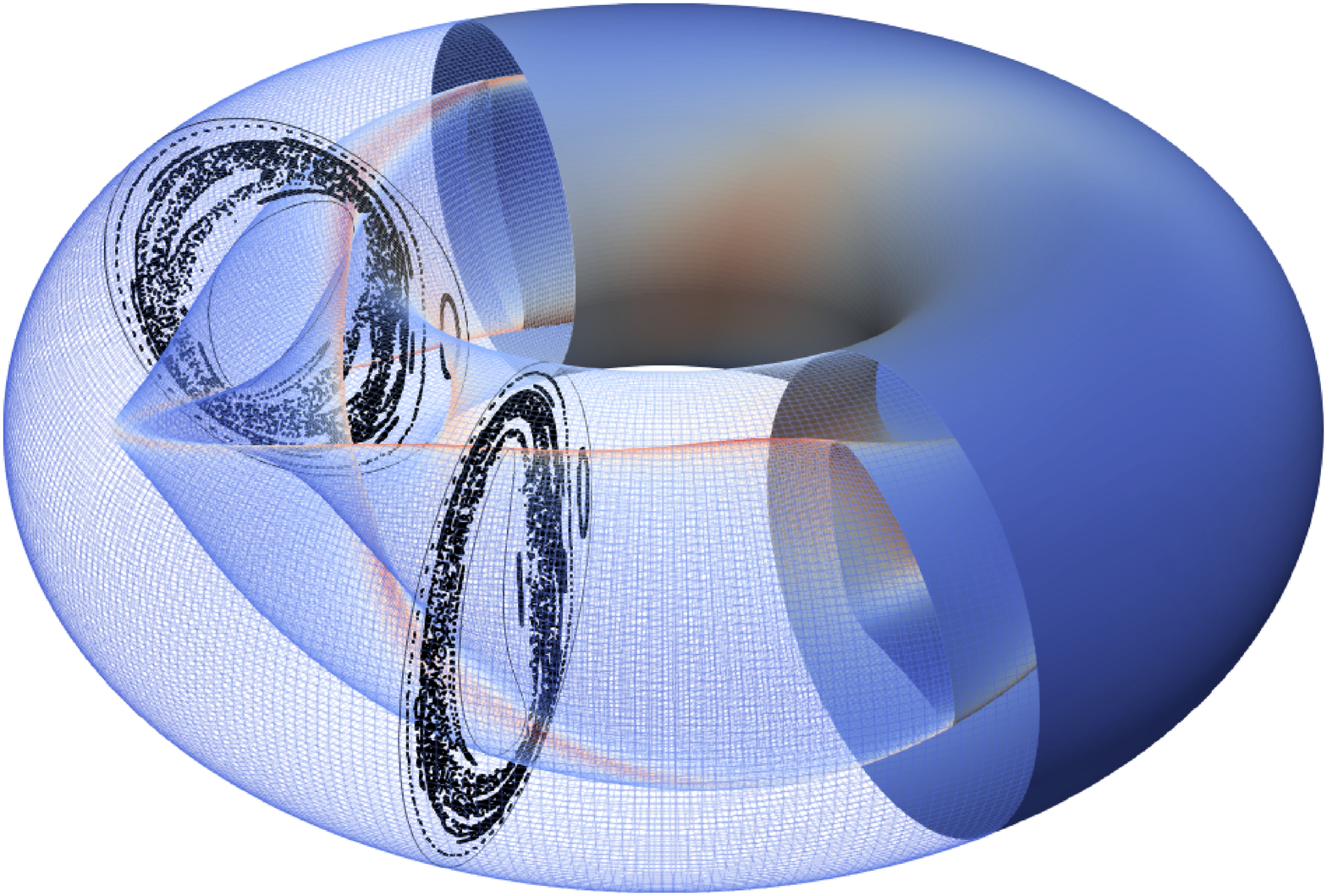}
  \includegraphics[width=0.16\textwidth]{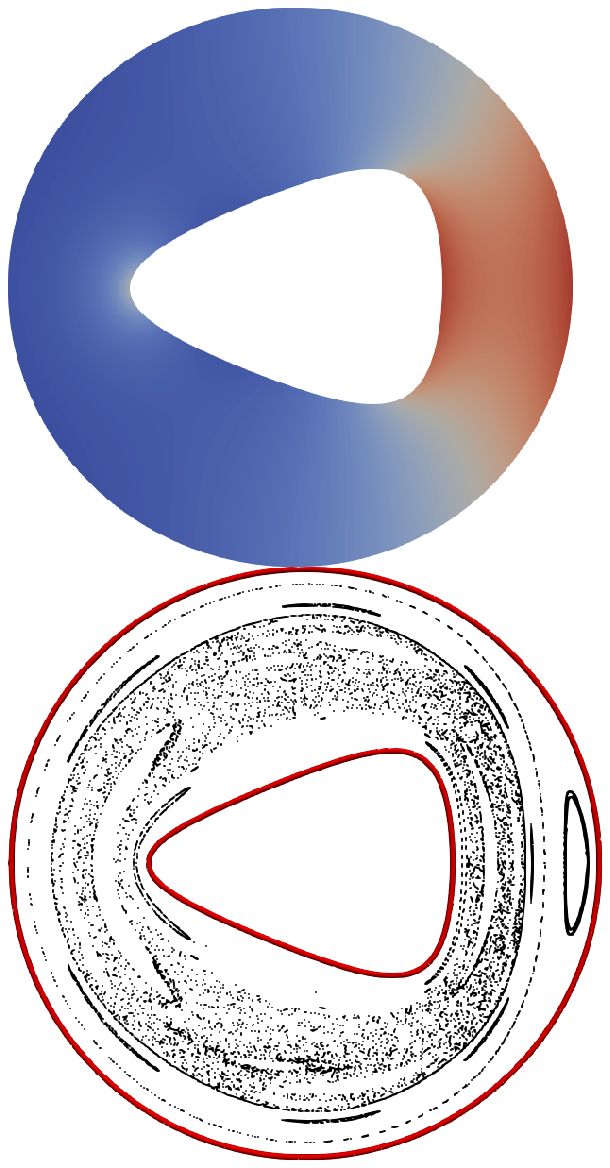}
  \includegraphics[width=0.16\textwidth]{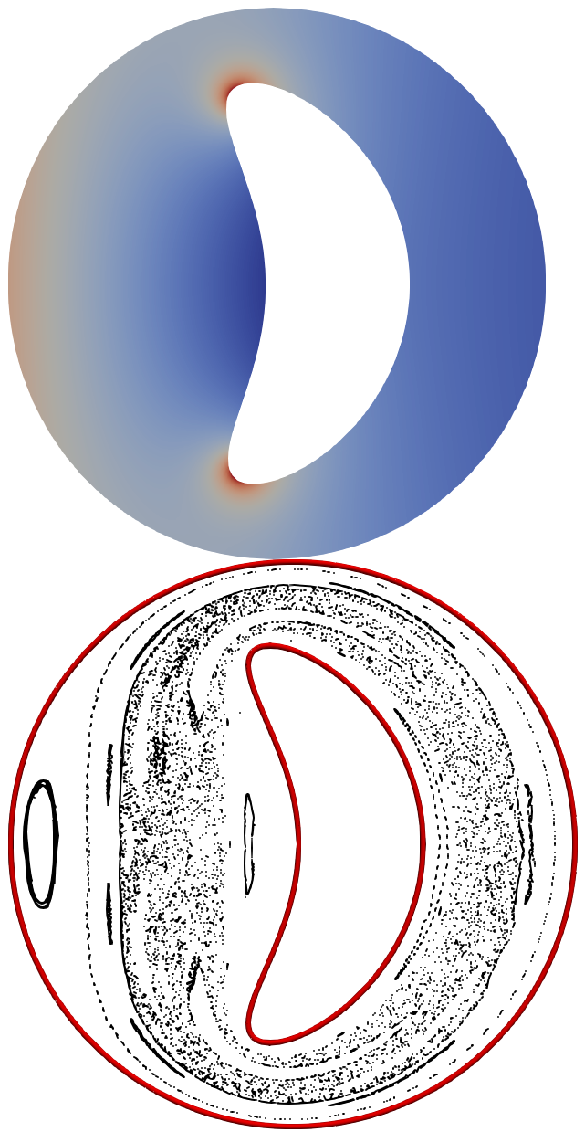}
  \includegraphics[width=0.16\textwidth]{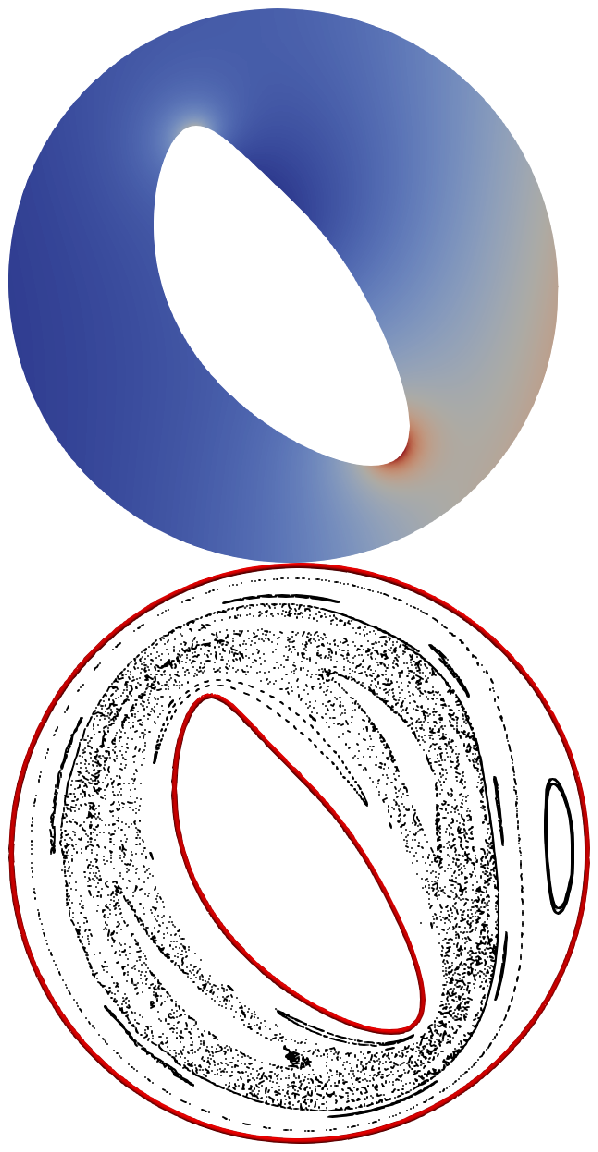}
  \caption[Taylor state in a toroidal-shell]{\label{f:highlight} An
    example of a Taylor state computed in a toroidal-shell domain.
    Using our boundary integral method, we only need to discretize the
    domain boundary. This reduces the dimensionality of the of
    unknowns needed, and leads to significant savings in computational
    work. Once the boundary integral
    equations is solved, the magnetic field~$\vector{B}$ can be
    evaluated at off-surface points very efficiently. On the
    right, we show the magnitude of~$\vector{B}$ in different
    cross-sections of the domain as well as Poincar\'e plots of
    the field in each cross-section, generated by tracing the field
    lines using a \ordinal{10}-order spectral deferred correction
    (SDC) scheme.  }
\end{figure}

Recently, a third approach has been developed which combines aspects
of the two categories of solvers described above. In this approach,
the entire computational domain is subdivided into separate regions,
each with constant pressure, assumed to have undergone Taylor
relaxation~\cite{Taylor1974} to a minimum energy state subject to
conserved fluxes and magnetic helicity. Each of these regions is
assumed to be separated by ideal MHD
barriers~\cite{Hudson2012,HudsonPPCF2012}.  This model has a
significant limitation: for general pressure profiles, solutions of
this model only truly agree with the equations of ideal MHD
equilibrium when one takes the limit of infinitely many
interfaces~\cite{Dennis2013}.  When the number of interfaces is
finite, the model is only a first-order approximation of the ideal MHD
equilibrium equations, independent of any subsequent numerical
discretization.  This third approach is nevertheless very promising
for several reasons. First, unlike the more general ideal MHD
equilibrium case, existence of solutions for the stepped pressure
equilibrium model has been established for tori whose departure from
axisymmetry is sufficiently small~\cite{Bruno1996}.  Second, numerical
solvers for this model can be used to study equilibrium configurations
with magnetic islands and regions with chaotic magnetic field
lines~\cite{Loizu2017} for a computational cost which is substantially
smaller than that of ideal MHD solvers having similar
capabilities~\cite{Reiman1988,Harafuji1989,Suzuki_Hint2}. Furthermore,
they generally have more robust convergence properties. Finally, the
model is well-suited for rigorous error convergence analysis and code
verification~\cite{Hudson2012,Loizu2016Verification}.

At present time, the only equilibrium code in the third category is
known as the Stepped Pressure Equilibrium Code
(SPEC)~\cite{spec-code}. For a given plasma pressure profile,
the computation of equilibria using SPEC is an iterative process. On
each iteration, one first computes the magnetic field~$\vector{B}$ of
the Taylor states inside each region, given by
~$\nabla\times\vector{B}=\lambda\vector{B}$ with~$\lambda$ a
specified constant (possibly different in each region). Then, the
force-balance condition on the ideal MHD interfaces is verified. If
the total pressure $p+|\vct{B}|^2/2$, where $p$ is the plasma
pressure, is continuous across each interface to the desired numerical
precision, the iterative process stops. If it is not continuous to the
desired numerical precision, the shape of the MHD interfaces is
modified in order to satisfy force balance, and a new iteration
starts. This shape optimization is a highly nonlinear procedure.

In this article, we focus on the first step within each iteration,
namely the computation of Taylor states given by
$\nabla\times\vector{B}=\lambda\vector{B}$ in toroidal domains for
which the boundary is given, and~$\lambda$ and the flux conditions are
such that the problem is well-posed~\cite{O_Neil_2018_Taylor}.  In
SPEC, this is done using a Galerkin approach~\cite{Hudson2012} in
which one solves for the magnetic vector potential~$\vector{A}$. The
components of~\vct{A} are represented using a Fourier-Chebyshev
expansion; the Fourier representation captures the double periodicity
in the poloidal and toroidal angles and the Chebyshev representation
is used for the radial variable~\cite{Loizu2016Verification}. In
contrast, in this work we present a boundary integral representation
for the Taylor state~$\vector{B}$ based on a single scalar variable
and solve the associated boundary integral equation. The advantages of
our numerical method as compared to the solver in SPEC are the typical
advantages one expects from integral equation solvers: the number of
unknowns in our approach is much smaller than in SPEC since unknowns
are only needed on the surface of the domain, our solver avoids issues
with the coordinate singularity which occurs when parameterizing the
volume of genus-one domains~\cite{HudsonPPCF2012}, and the
representation immediately leads to a well-conditioned (away from
physical interior resonances) second-kind integral equation which can
be numerically inverted to high-precision. We will present numerical
tests which demonstrate the robustness and efficiency in our approach,
showing that for a given target accuracy, our solver is significantly
faster than the SPEC code and that it avoids conditioning issues
encountered in SPEC.

Our integral equation formulation is based on the same generalized
Debye representation that we presented
in~\cite{O_Neil_2018_Taylor} for the calculation of axisymmetric
Taylor states. One may initially think that generalizing the solver
from axisymmetric geometries to non-axisymmetric ones is
straightforward. This is not so, for several reasons. First, the
numerical discretization of the boundary integral equation formulation
requires quadratures for weakly-singular kernels on the boundary of
the domain. In axisymmetric geometries, the boundary of the domain can
merely be considered a closed curve and fast and accurate quadrature
schemes are readily available (and easy to implement). However, for
the computation of Taylor states in non-axisymmetric geometries, the
boundary is a surface; this makes the accurate and fast computation of
these integrals much more challenging, both from a mathematical and
computational point of view. The same difficulties apply to
the computation of the surface gradient and the inverse
Laplace-Beltrami operator, which are much easier to evaluate along a
closed curve than on general stellarator geometries. Finally, in
axisymmetric domains, there exists simple closed form expressions for
the surface harmonic vector fields required in our formulation, and
this is not the case in non-axisymmetric domains. A significant
portion of this article focuses on the new algorithms we developed to
address these difficulties. We have implemented our method in the form
of a software library called BIEST (Boundary Integral Equation Solver
for Taylor states). The library is made publicly available for use by
the scientific community and to allow independent verification of
our results\footnote{\url{https://github.com/dmalhotra/BIEST}}.

The paper is organized as follows. In Section~\ref{sec:debye}, we
present a brief review of the generalized Debye representation for
magnetic fields satisfying the Taylor state equation, and of the
resulting boundary integral formulation for the computation of the
Taylor state. We also give a new numerically stable representation for
the computation of the flux condition when $\lambda$ approaches zero,
and present an alternative, more direct method for computing vacuum
fields (i.e. the case where $\lambda=0$). In Section~\ref{s:algo} we
provide a detailed description of the numerical solver, 
with a particular emphasis on high-order surface
quadratures for singular kernels and inverting the Laplace-Beltrami
operator. In Section~\ref{s:tests}, we test the accuracy and speed of
the singular quadrature scheme and of the Laplace-Beltrami solver, as
well as the accuracy and speed of the entire solver. In particular, we
compare the performance of our solver with that of SPEC for the
W7-X geometry~\cite{Beidler1990,Pedersen2017}, a stellarator
experiment in Greifswald, Germany. We summarize our work in
Section~\ref{s:summary} and propose directions for future work.

\section{Beltrami fields, Taylor states, and generalized Debye
  sources}\label{sec:debye}

In this section we detail the relationship between time harmonic
electromagnetic fields and Taylor states. These two classes of vector
fields can be directly linked using the generalized Debye integral
representation for Taylor states.  Using this integral representation,
second-kind boundary integral equations are then derived for
force-free fields corresponding to Taylor states in stellarator
geometries. These boundary integral equations can then be used to
solve for stepped-pressure equilibria, as discussed in the
introduction.

As discussed in the introduction, in this article we focus solely on
the task of computing Taylor states in stellarator geometries.
Taylor states are 
described by the equation
\begin{equation}
\nabla \times \vector{B} = \lambda \vector{B}
\label{eq:TaylorState}
\end{equation}
where~$\lambda$ is a given constant throughout the computational
domain, determined based on magnetic energy and helicity.
As is well-known~\cite{Hudson2012,O_Neil_2018_Taylor}, this
partial differential equation needs boundary conditions and flux
constraints on $\vector{B}$ in multiply-connected geometries
in order to be well-posed. We will specify
them shortly, but refrain from doing so at this stage in order to
focus on the generalized Debye representation for~$\vector{B}$ which
\emph{a priori} satisfies~\eqref{eq:TaylorState}, independent of the
boundary conditions or the flux constraints.

Since~$\lambda$ is a real-valued constant, it is easy to
see that by setting~$\vector{E} = i\vct{B}$ the pair~$\vct{E},
\vct{B}$ satisfy the source-free time-harmonic Maxwell equations with
wavenumber~$\lambda$, denoted by THME$(\lambda)$:
\begin{equation}\label{eq:thme}
  \begin{aligned}
    \nabla \times \vct{E} &= i\lambda \vct{B}, \qquad& 
    \nabla \times \vct{B} &= -i\lambda \vct{E},   \\
    \nabla \cdot  \vct{E} &= 0 , &
    \nabla \cdot  \vct{B} &= 0.
  \end{aligned}
\end{equation}
There is a rich literature on integral equation
representations and methods for solving various boundary value
problems for the THME$(\lambda)$. It turns out that the generalized
Debye source representation for solutions to the THME is also
particularly well-suited for solving boundary value problems for Taylor states~\cite{epstein2015}. We now turn to
a very brief overview of this representation, and the derivation of an
integral representation and integral equation for Taylor states in
magnetically confined plasmas.  Various theoretical and numerical
aspects of the generalized Debye source representation can be found
in~\cite{Epstein_2012,epstein2015,O_Neil_2018_Taylor,Epstein_2010,epstein2019}.

\subsection{Generalized Debye sources}

An advantage of the generalized Debye source representation is that
the fields are constructed so as to automatically satisfy Maxwell's
equations, leaving only the boundary condition to be met. This is in
contrast to several classical integral representations for
electromagnetic fields, as well as virtually all direct PDE
discretization schemes (e.g. finite difference and finite element
methods).

Denote by~$\Domain$ a domain (bounded or unbounded) with smooth
boundary~$\Boundary$. In~$\Domain$, any time harmonic electromagnetic
field~$\vct{E},\vct{H}$ with wavenumber $k \in \mathbb{C}$ (having
nonnegative imaginary part)
can be represented using two vector
potentials $\vct{A}, \vct{Q}$ and two scalar potentials $u, v$:
\begin{equation}
  \begin{aligned}
    \vector{E} &= \Imag \Wavenumber \vector{A} - \Grad{u} - \Curl{\vector{Q}}, \\
    \vector{H} &= \Imag \Wavenumber \vector{Q} - \Grad{v} +
    \Curl{\vector{A}}.
  \end{aligned}
  \label{e:debye-maxwell}
\end{equation}
If the vector and scalar potentials satisfy the homogeneous Helmholtz
equation in~$\Domain$, then it is easy to check that the THME$(k)$ are
automatically satisfied so long as:
\begin{equation}\label{eq:aquv}
  \nabla \cdot \vct{A} = iku, \qquad
  \nabla \cdot \vct{Q} = ikv.
\end{equation}
If these potentials are defined by
\begin{equation}
  \begin{aligned}
    \vector{A}(\vector{x}) &= \int\limits_{\Boundary} \HelmKer{k}(\vector{x}-\vector{x'}) ~\vector{j}(\vector{x'}) ~\AreaElem' ,
    &&&&&
    u(\vector{x})          &= \int\limits_{\Boundary} \HelmKer{k}(\vector{x}-\vector{x'}) ~\rho   (\vector{x'}) ~\AreaElem' ,
    \\
    \vector{Q}(\vector{x}) &= \int\limits_{\Boundary} \HelmKer{k}(\vector{x}-\vector{x'}) ~\vector{m}(\vector{x'}) ~\AreaElem' ,
    &&&&&
    v(\vector{x})          &= \int\limits_{\Boundary} \HelmKer{k}(\vector{x}-\vector{x'}) ~\sigma (\vector{x'}) ~\AreaElem' ,
  \end{aligned}
  \label{e:potentials}
\end{equation}
where
$\HelmKer{k}(\vector{r}) = {e^{\Imag \Wavenumber
    |\vector{r}|}}/{4\pi|\vector{r}|}$ is the free-space Green's
function for the Helmholtz equation in three dimensions and
$\AreaElem' = \AreaElem(\vct{x}')$
is the differential area element along $\Boundary$ at the
point $\vector{x'}$, then it can be shown that
condition~\eqref{eq:aquv} is automatically satisfied if the following
two consistency conditions are met:
\begin{align} \label{e:debye-maxwell-consistency}
    \SurfDiv{\vector{j}} = \Imag \Wavenumber \rho , \qquad
    \SurfDiv{\vector{m}} = \Imag \Wavenumber \sigma.
\end{align}
Above, $\SurfDiv{\vector{j}}$ denotes the intrinsic surface divergence
of $\vector{j}$ along $\Boundary$.  With the above formulation, if two
scalar boundary conditions are specified (or one vector condition, as
is the case in scattering from a perfect electric conductor), the
resulting equations for $u, v, \vct{A}, \vct{Q}$ are
under-determined. To avoid this, and ensure automatic satisfaction of
the consistency conditions~\eqref{e:debye-maxwell-consistency}, the
surface vector fields $\vct{j}, \vct{m}$ are explicitly constructed
from $\sigma, \rho$.  The constructions of~$\vct{j}, \vct{m}$,
referred to as the generalized Debye currents, will vary depending on
the boundary conditions being enforced on $\vct{E}, \vct{H}$. For
example, in the case of scattering from a perfect electric conductor,
the boundary conditions to be met are
\begin{equation}
\Normal \times \vct{E} = \vct{F}, \qquad \Normal \cdot \vct{H} = g,
\end{equation}
where $\vct{F}$ and $g$ are data obtained from an incoming
electromagnetic field. For these boundary conditions, it has been
shown that constructing $\vct{j}, \vct{m}$ as
\begin{equation}
  \begin{aligned}
    \vector{j} &= \Imag \Wavenumber \left( \SurfGrad{\InvSurfLap{\rho}}   - \cross{\Normal}{\SurfGrad{\InvSurfLap{\sigma}}} \right) + \vector{j}_{H} , \\
    \vector{m} &= \vct{n} \times \vct{j},
  \end{aligned}
  \label{e:j-m-field}
\end{equation}
leads to a uniquely invertible system of integral equations for the
scalar sources $\sigma, \rho$ and coefficients fixing the projection
of~$\vct{j}$ onto the subspace of harmonic vectorfields
along~$\Gamma$~\cite{Epstein_2010}. Above, we have that $\SurfGrad{}$
is the surface gradient operator, $\InvSurfLap{}$ is the inverse of
the surface Laplacian (Laplace-Beltrami operator) along $\Boundary$
restricted to the class of mean-zero functions, and $\vector{j}_{H}$
is a tangential harmonic vector field along~$\Boundary$ such
that~$\SurfDiv{\vector{j}_{H}} = 0$
and~$\SurfDiv{\cross{\Normal}{\vector{j}_{H}}} = 0$.

The space of harmonic vector fields along~$\Boundary$ has
dimension~$2G$, where $G$ is the genus of the
boundary. In~\cite{O_Neil_2018_Taylor,epstein2019}, a basis for these
harmonic vector fields is known analytically because the
boundary~$\Boundary$ is a surface of revolution. However, in the
present work, they must be obtained computationally. To do so, we
follow a procedure similar to that
in~\cite{O_Neil_2018_LaplaceBeltrami}, which is detailed later on in
Section~\ref{ss:harmonic-vec-algo}.

%>>>

\subsection{Taylor state formulation\label{ss:formulation}} 
% vim: set foldmethod=marker foldmarker=<<<,>>>

%<<< Problem statement

In this section we detail the explicit construction of a vector
field~$\vct{B}$, using generalized Debye sources, that \emph{a priori}
analytically satisfies the Taylor state equation
\begin{equation}
  \nabla \times \vct{B} = \lambda \vct{B}
\end{equation}
and leads to a uniquely invertible integral equation  corresponding to
the boundary condition
\begin{equation}
  \vct{B} \cdot  \vct{n} = 0.
\end{equation}
For more details on this construction, in general,
see~\cite{epstein2015}, and for details in the axisymmetric case
see~\cite{O_Neil_2018_Taylor}. It should be noted that the above
boundary value problem does \emph{not} have a unique solution
if~$\lambda$ is an eigenvalue of the curl operator acting on
vectorfields with vanishing normal component along the
boundary. Furthermore, depending on the particular geometry in which
the above boundary value problem is being solved, it must be augmented
with a suitable number of extra conditions (usually provided as flux
constraints on $\vct{B}$) in order to be well-posed.  For clarity, the
following is a very condensed version of what is contained in the
previously mentioned two references.

\subsubsection{Generalized Debye representation for Taylor states} %<<<

For the application at hand, namely that of computing Taylor states
for magnetically confined plasmas in general toroidal domains, we must
allow for the boundary~$\Boundary$ of the domain~$\Domain$ to be
multiply connected, and possibly have multiple components.
To this end, let~$\Boundary = \partial \Domain$ be a disjoint union
of $\Nsurf$ smooth toroidal surfaces
($\Boundary_1, \Boundary_2, \dots, \Boundary_\Nsurf$).  We want to
compute vector fields $\vector{B}$ in $\Domain$ such that,
\begin{align}
    \begin{aligned}
     \Curl{\vector{B}}             &= \BeltramiParam \vector{B},
     &\qquad &\text{in } \Domain, \\
     \dotprod{\vector{B}}{\Normal} &= 0, &  &\text{on \Boundary}, \\
     \int\limits_{\CrossSection_i} \dotprod{\vector{B}}{\VecAreaElem}
     &= \Flux_i
     & &\text{for } i = 1, \dots, \Nsurf, 
    \end{aligned}
    \label{e:taylor-state}
\end{align}
where $\BeltramiParam$ is a constant real number called the Beltrami
parameter and $\Normal$ is the unit normal vector to~$\Boundary$ pointing
outward from $\Domain$. The surfaces~$\CrossSection_i$ are generally
chosen to capture all possible toroidal and poloidal fluxes of
interest inside~$\Domain$. In these flux
integrals,~$\VecAreaElem = \Normal \AreaElem$ with $\AreaElem$ being
the surface area element and $\Normal$ the oriented normal along the
cross-section $\CrossSection_i$.
%>>>
In \cite{O_Neil_2018_Taylor}, we  applied the generalized Debye
representation of \cite{Epstein_2012} for computing Taylor states in
axisymmetric geometries.  We use the same integral formulation in the
present work for computing Taylor states in non-axisymmetric
geometries.

\begin{figure}[t!] %<<<
  \centering
  \includegraphics[width=0.56\textwidth]{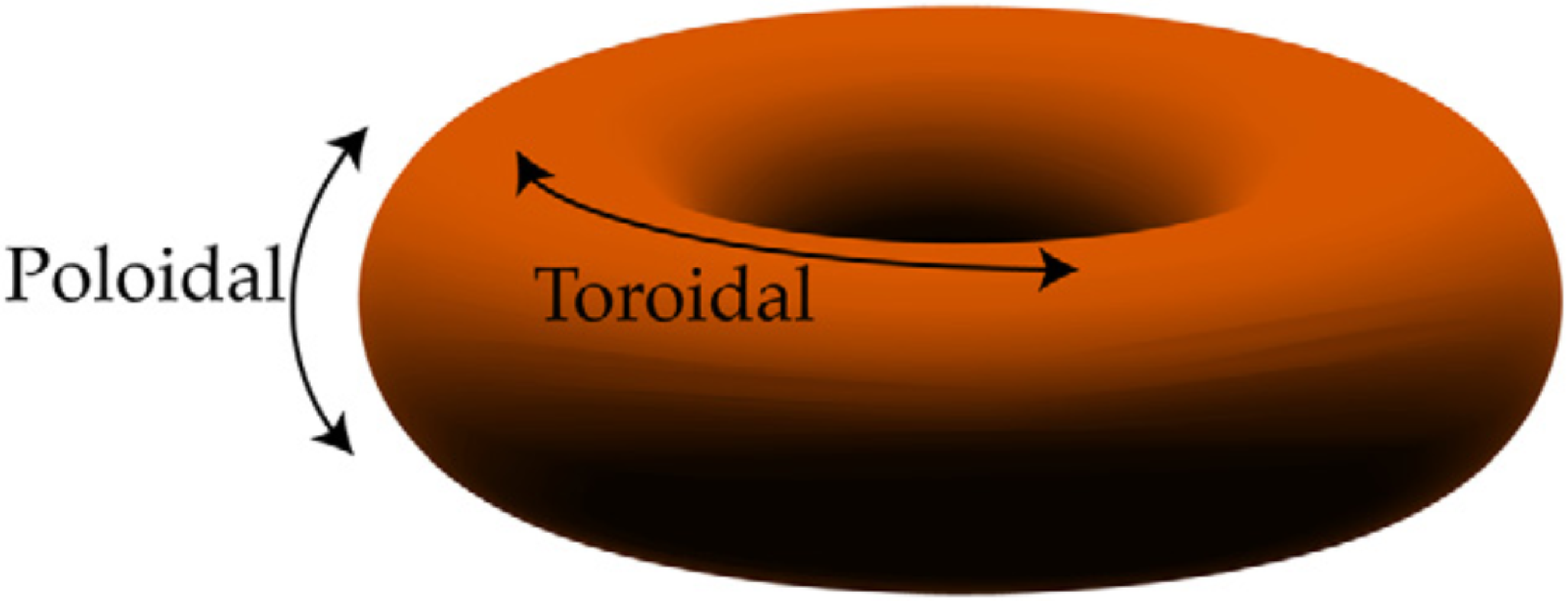}
  \includegraphics[width=0.43\textwidth]{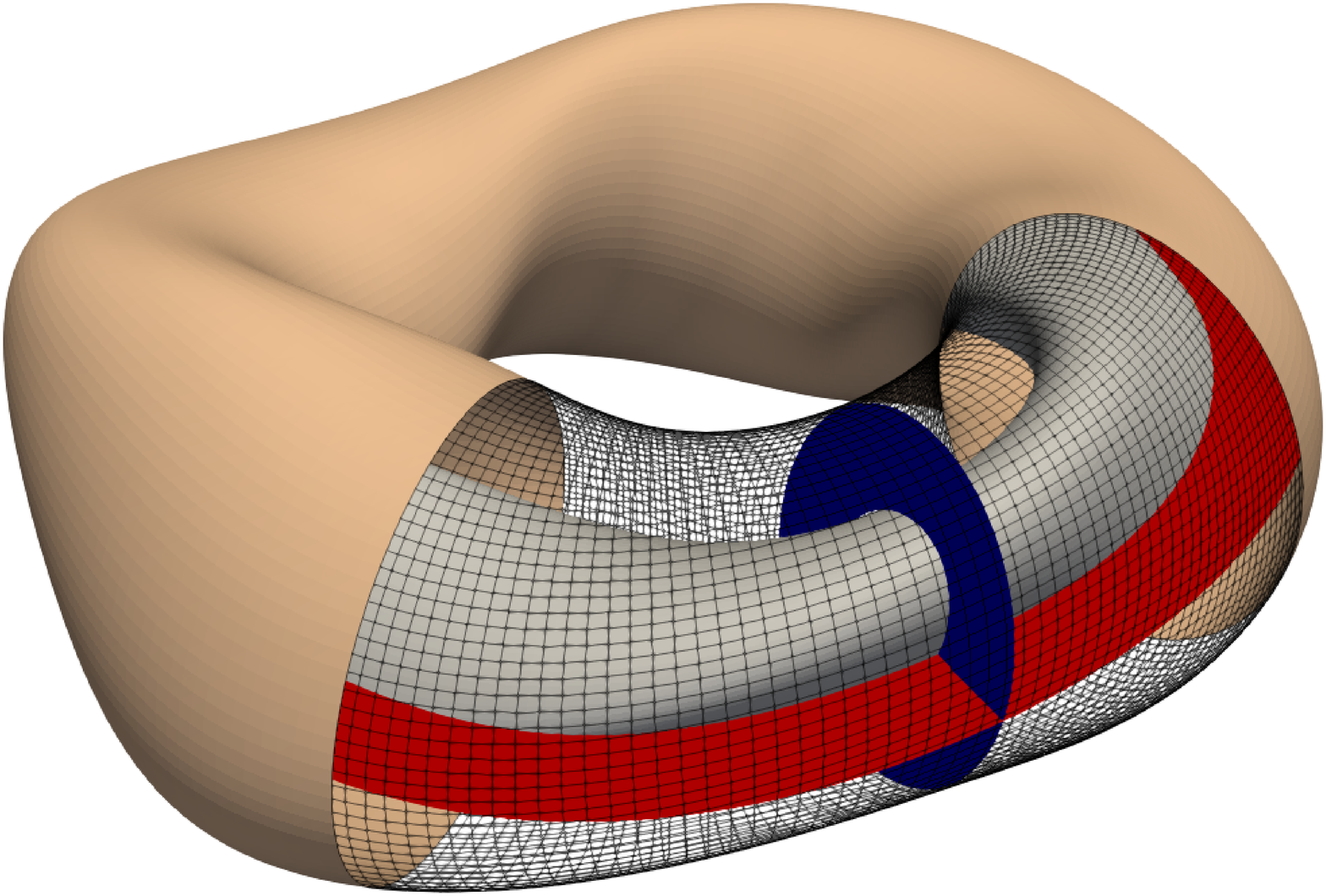}
  \caption{\label{f:geometry} Left: A toroidal surface with the
    toroidal and poloidal directions.  Right: A toroidal shell domain
    is the region between two nested toroidal geometries.  The
    cross-sections of the domain shown in blue and red are called the
    toroidal and poloidal cross-sections respectively.  The flux
    conditions for a Taylor state are given as the prescribed magnetic
    flux through these cross-sections.  }
\end{figure} %>>>

As noted before, a vector field $\vct{B}$ which
satisfies the Beltrami condition~\mbox{$\Curl{\vector{B}} = \BeltramiParam
\vector{B}$} admits the pair
\mbox{$( \vector{E} = \Imag \vector{B}, \vector{H}=\vector{B} )$} which
automatically satisfies
the THME$(\lambda)$.  The vector field \vector{B} can therefore be
represented using the generalized Debye source
representation. Furthermore, due to the relation
$\vector{E} = \Imag \vector{H}$ and the symmetry of the generalized
Debye representation in \pr{e:debye-maxwell}, similar dependencies on
the vector potentials, scalar potentials, Debye currents, and Debye
sources can also be shown~\cite{epstein2015}:
\begin{equation}
\vct{A} = i \vct{Q}, \qquad u = \Imag v, \qquad \rho = i \sigma.
\end{equation}
As a result, the generalized Debye source representation for the Taylor
state~$\vector{B}$ is given as:
\begin{align}
  \vector{B} = \Imag \BeltramiParam \vector{Q} - \Grad{v} + \Imag \Curl{\vector{Q}},
  \label{e:debye-taylor}
\end{align}
where the vector potential~$\vector{Q}$ and the scalar potential $v$
are as defined before in~\pr{e:potentials}. In order to satisfy
$\Curl{\vector{B}} = \BeltramiParam \vector{B}$, the
consistency condition~~$\SurfDiv{\vector{m}} = \Imag \BeltramiParam
\sigma$ must be met.  With
an additional
constraint~~$\cross{\Normal}{\vector{m}} = -\Imag~\vector{m}$, the
surface vector field $\vector{m}$ is determined uniquely up to an
additive harmonic vector field,
\begin{align}\label{e:m-field}
  \begin{aligned}
    \vector{m} &= \vector{m}_0(\vector{\sigma}) + \vector{m}_H ,
  \end{aligned}  
\end{align}
where
\begin{align}
  \vector{m}_0(\vector{\sigma}) &= \Imag \BeltramiParam \left(
                                  \SurfGrad{\InvSurfLap{\sigma}} + \Imag~ \cross{\Normal}{\SurfGrad{\InvSurfLap{\sigma}}} \right)
                                  \label{e:m0-field}
\end{align}
and $\vector{m}_H$ is a harmonic surface vector field such
that:~${\SurfDiv{\vector{m}_H} = 0}$
and~${\cross{\Normal}{\vector{m}_H} = -\Imag~\vector{m}_H}$.  As
described at the beginning of this section, there are $\Nsurf$
linearly independent harmonic vector fields
$\{ \vector{m}^{1}_H, \cdots, \vector{m}^{\Nsurf}_H \}$ on
$\Boundary$ satisfying these conditions.
% such that,
% \begin{align*}
    %     \SurfDiv{\vector{m}^k_H} = 0 && \cross{\Normal}{\vector{m}^k_H} = -\Imag~\vector{m}^k_H && \text{for all $k=1, \cdots, \Nsurf$}.
    %   \end{align*}
Therefore, we may represent $\vector{m}_H$ as:
\begin{align}
  \vector{m}_H(\vector{\alpha}) = \sum_{k=1}^{\Nsurf} \alpha_k~\vector{m}^{k}_{H}.
  \label{e:mh-field}
\end{align}
The unknowns $\sigma$ and
$\vector{\alpha} = \{\alpha_1, \cdots, \alpha_{\Nsurf}\}$ must be
determined using the boundary conditions and flux constraints in
\pr{e:taylor-state}.
%>>>

\subsubsection{Boundary integral formulation} %<<<
The generalized Debye representation for $\vector{B}$, as stated in
\prange{e:debye-taylor}{e:mh-field}, \emph{a priori} satisfies
$\Curl{\vector{B}} = \BeltramiParam \vector{B}$.  However, it does not
automatically satisfy the boundary
condition~$\dotprod{\vector{B}}{\Normal} = 0$ on $\Boundary$, nor the
non-trivial flux constraints.  For any scalar function~$\sigma$ and
coefficients vector~$\vector{\alpha}$, the
field~$\vector{B}$ can be evaluated directly on the
boundary~$\Boundary$ by taking the limit of its value from the
interior of~$\Domain$:
\begin{align}
  \vector{B}(\sigma, \vector{\alpha}) &= -\frac{\sigma}{2}\Normal + \Imag \frac{\cross{\Normal}{\vector{m}}}{2} + \Imag \BeltramiParam \SL{\BeltramiParam}[\vector{m}] - \Grad{\SL{\BeltramiParam}[\sigma]} + \Imag \Curl{\SL{\BeltramiParam}[\vector{m}]} & \text{on \Boundary},  % TODO: verify
    \label{e:B-bdry-integ}
\end{align}
where $\vector{m}(\sigma, \vector{\alpha})$ is as defined in
\prange{e:m-field}{e:mh-field} and $\SL{\BeltramiParam}$ is the
single-layer potential
operator, %which computes the surface convolution with $\HelmKer{\BeltramiParam}$.
  \begin{align*}
    \SL{\BeltramiParam}[f](x) = \int\limits_{\Boundary} \HelmKer{\BeltramiParam}(\vector{x} - \vector{x}') f(\vector{x}') \AreaElem(\vector{x}').
  \end{align*}
  The boundary condition $\dotprod{\vector{B}}{\Normal} = 0$ on $\Boundary$ results in a second-kind integral equation,
  \begin{align}
    -\frac{\sigma}{2} + \conv{K}[\sigma, \vector{\alpha}] &= 0,
    \label{e:taylor-second-kind}
  \end{align}
where $\conv{K}$ denotes the compact operator,
\begin{align}
  \conv{K}[\sigma, \vector{\alpha}] &= \Imag \BeltramiParam \dotprod{\Normal}{\SL{\BeltramiParam}[\vector{m}]} - \partial_{\Normal}{\SL{\BeltramiParam}[\sigma]} + \Imag \dotprod{\Normal}{\Curl{\SL{\BeltramiParam}[\vector{m}]}} .
    \label{e:K-op}
\end{align}
%>>>

\subsubsection{Flux computation} %<<<

The flux constraints as stated in \pr{e:taylor-state} are difficult to
impose in a boundary integral formulation since we only discretize the
boundary $\Boundary$ and not all of~$\Domain$.  However, using
Stokes' theorem and the fact that by construction
$\Curl{\vector{B}} = \BeltramiParam \vector{B}$, we can relate the
flux of $\vector{B}$ through a cross-section $\CrossSection$ with its
circulation on
$\partial \CrossSection = \CrossSection \Intersection \Boundary$.
Applying the Stokes' theorem and using \pr{e:B-bdry-integ},
\begin{align}
  \int\limits_{\CrossSection}
  \dotprod{\vector{B}(\sigma, \vector{\alpha})}{\D\vct{a}}
  &= \frac{1}{\BeltramiParam} \oint\limits_{\partial \CrossSection} \dotprod{\vector{B}(\sigma, \vector{\alpha})}{\VecLengthElem} \nonumber \\
                                                                                         &= \oint\limits_{\partial \CrossSection} \dotprod{\Imag \SL{\BeltramiParam}[\vector{m}_0 + \vector{m}_H]}{\VecLengthElem}
                                                                                                         + \frac{\Imag}{\BeltramiParam} \oint\limits_{\partial \CrossSection} \dotprod{\left( \frac{\cross{\Normal}{\vector{m}_0}}{2} + \Curl{\SL{\BeltramiParam}[\vector{m}_0]} \right)}{\VecLengthElem} \nonumber \\
  & \hspace{15em} + \frac{\Imag}{\BeltramiParam} \oint\limits_{\partial \CrossSection} \dotprod{\left( \frac{\cross{\Normal}{\vector{m}_H}}{2} + \Curl{\SL{\BeltramiParam}[\vector{m}_H]} \right)}{\VecLengthElem},                                                                                   \label{e:flux-eq}
\end{align}
where $\vector{m}_0(\sigma)$ is as defined in \pr{e:m0-field},
$\vector{m}_H(\vector{\alpha})$ is as defined in \pr{e:mh-field} and
$\VecLengthElem$ is the oriented unit arclength differential.  The first and
the second integral terms in \pr{e:flux-eq} remain bounded as
$\BeltramiParam \to  0$ since 
$\vector{m}_0 \sim \bigO{\BeltramiParam}$; however, computing the
last term becomes numerically unstable due to the $1/\BeltramiParam$
factor. In order to numerically stabilize this calculation, we begin
with the following lemma.

\begin{lem} \label{lem:vacuum-field-circulation} %<<<
For a tangential vector field~$\vector{m}$ on~$\Boundary$ such
that~$\SurfDiv{\vector{m}} = 0$,
and an arbitrary cross section~$\CrossSection$ of the domain~$\Domain$,
\[
  \oint\limits_{\partial \CrossSection} \dotprod{\left(
      \frac{\cross{\Normal}{\vector{m}}}{2} +
      \Curl{\SL{0}[\vector{m}]} \right)}{\VecLengthElem} = 0.
\]
\end{lem}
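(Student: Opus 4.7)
The plan is to recognize the bracketed integrand as the interior boundary limit of the off-surface vector field $\vector{F} := \Curl{\SL{0}[\vector{m}]}$, and then to show that $\vector{F}$ is curl-free inside $\Domain$ so that its circulation along any loop bounding a cross-section vanishes by Stokes' theorem.

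First, I would verify that the hypothesis $\SurfDiv{\vector{m}} = 0$ forces $\SL{0}[\vector{m}]$ to be divergence-free in $\Domain \setminus \Boundary$. Moving the derivative onto $\vector{x}'$ via $\nabla_{\vector{x}} g_0(\vector{x}-\vector{x}') = -\nabla_{\vector{x}'} g_0(\vector{x}-\vector{x}')$, and using that $\vector{m}$ is tangential so only the surface gradient survives, surface integration by parts on the closed surface $\Boundary$ yields
\[
  \nabla \cdot \SL{0}[\vector{m}](\vector{x}) \;=\; -\int_{\Boundary} \SurfGrad_{\vector{x}'} g_0(\vector{x}-\vector{x}') \cdot \vector{m}(\vector{x}') \AreaElem' \;=\; \int_{\Boundary} g_0(\vector{x}-\vector{x}')\, \SurfDiv{\vector{m}}(\vector{x}') \AreaElem' \;=\; 0.
\]
Combining this with the fact that $\SL{0}[\vector{m}]$ is componentwise harmonic off $\Boundary$, the identity $\Curl \Curl = \Grad (\Div{}) - \Lap{}$ gives $\Curl \vector{F} = 0$ throughout $\Domain$.

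Second, I would identify the bracketed expression as $\vector{F}^{\text{int}}|_{\Boundary}$. This is the $\BeltramiParam=0$ specialization of the same jump relation used to derive \pr{e:B-bdry-integ}: the classical jump formula for the curl of a surface single layer with tangential density shifts the interior trace of $\Curl{\SL{0}[\vector{m}]}$ by $\frac{1}{2}\cross{\Normal}{\vector{m}}$ relative to its principal-value value on $\Boundary$.

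Finally, I would apply Stokes' theorem. For $\varepsilon > 0$, let $\CrossSection_{\varepsilon} \subset \Domain$ be obtained by pushing $\partial \CrossSection$ a small distance $\varepsilon$ inward along $\Boundary$. Since $\vector{F}$ is smooth and curl-free on $\overline{\CrossSection_{\varepsilon}}$,
\[
  \oint_{\partial \CrossSection_{\varepsilon}} \dotprod{\vector{F}}{\VecLengthElem} \;=\; \int_{\CrossSection_{\varepsilon}} \dotprod{\Curl{\vector{F}}}{\VecAreaElem} \;=\; 0.
\]
Letting $\varepsilon \to 0$ and invoking the continuity of the one-sided boundary trace of $\vector{F}$ along a smooth curve in $\Boundary$ converts this into the claimed identity. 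The only non-routine step is justifying this last passage to the limit — i.e., that the circulation of $\vector{F}$ on the shrinking loops converges to the circulation of its interior trace — which follows from standard trace and regularity results for single-layer potentials with smooth tangential densities on a smooth surface.
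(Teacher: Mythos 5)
Your proof is correct and follows essentially the same route as the paper's: define the auxiliary field $\vector{V}=\Curl{\SL{0}[\vector{m}]}$, show it is curl-free in $\Domain$ via $\Curl\Curl=\Grad\Div-\Lap$ together with $\Div{\SL{0}[\vector{m}]}=\SL{0}[\SurfDiv{\vector{m}}]=0$, recognize the bracketed quantity as the interior boundary trace of $\vector{V}$ from the jump relation, and conclude with Stokes' theorem. The only place you go beyond the paper is the final step: the paper applies Stokes on $\CrossSection$ directly after noting the limiting value of $\vector{V}$ on $\partial\CrossSection$, whereas you explicitly introduce inward-shifted loops $\partial\CrossSection_{\varepsilon}$ and pass to the limit, which is the honest justification of what the paper elides.
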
 %>>>
\begin{proof} %<<<
  Let $\vector{V}  = \Curl{\SL{0}[\vector{m}]}$. Then, at every
  point in~$\Domain$, in particular for points
  on~$\CrossSection \subset \Domain$, we have that
  \begin{align*}
    \Curl{\vector{V}} &= \Curl{\Curl{\SL{0}[\vector{m}]}} \\
                      &= \Grad{\left(\Div{\SL{0}[\vector{m}]}\right)}
                        - {\Lap{\SL{0}}}[\vector{m}]\\
                      &= \Grad{\SL{0}[{\SurfDiv{\vector{m}}}]} \\
    &= 0.
  \end{align*}
  Furthermore, we have that the limiting value of~$\vct{V}$
  on~$\partial\CrossSection$ is
  \[
    \vct{V} = \frac{\cross{\Normal}{\vector{m}}}{2} +  \nabla \times
    \SL{0}[\vector{m}],
  \]
  as in~\pr{e:B-bdry-integ}.
  Therefore, by Stokes' theorem 
  \[
    \begin{aligned}
      \oint\limits_{\partial \CrossSection} \dotprod{\left(
          \frac{\cross{\Normal}{\vector{m}}}{2} +
          \Curl{\SL{0}[\vector{m}]} \right)}{\VecLengthElem}
      &= \oint\limits_{\partial\CrossSection}
      \dotprod{\vct{V}}{\VecLengthElem} \\
      &= \int\limits_{\CrossSection} \nabla \times \vct{V} 
      \cdot \D\vct{a} \\
      &= 0.
    \end{aligned}
  \]
\end{proof} %>>>

Returning to the previous flux calculation,
since~$\SurfDiv{\vector{m}_H}=0$ we can apply the above lemma to obtain:
\begin{equation}
{\frac{\Imag}{\BeltramiParam}\oint\limits_{\partial \CrossSection}
  \dotprod{\left( \frac{\cross{\Normal}{\vector{m}_H}}{2} + 
      \Curl{\SL{0}[\vector{m}_H]} \right)}{\VecLengthElem} = 0}.
\end{equation}
Subtracting this identity from the last term in
  \pr{e:flux-eq} gives us the following relation,
\begin{align}
  \frac{1}{\BeltramiParam} \oint\limits_{\partial \CrossSection} \dotprod{\left( \frac{\vector{m}_H}{2} + \Imag \Curl{\SL{\BeltramiParam}[\vector{m}_H]} \right)}{\VecLengthElem}
  =
  \oint\limits_{\partial \CrossSection} \dotprod{\Imag \Curl{\left( \frac{\SL{\BeltramiParam}-\SL{0}}{\BeltramiParam} \right) [\vector{m}_H]}}{\VecLengthElem},
    \label{e:flux-last-term}
  \end{align}
  where the operator
  $\left( \frac{\SL{\BeltramiParam}-\SL{0}}{\BeltramiParam} \right)$
  is equivalent to computing a convolution along the boundary
  with the following bounded kernel function:
  \begin{align*}
    \frac{\HelmKer{\BeltramiParam}(\vector{r}) - \HelmKer{0}(\vector{r})}{\BeltramiParam} &= - \frac{ \sin\left( {\BeltramiParam |\vector{r}|}/{2} \right) \sinc\left( {\BeltramiParam |\vector{r}|}/{2} \right) }{4 \pi} + \Imag~\frac{\sinc\left( \BeltramiParam |\vector{r}| \right) }{4 \pi}.
  \end{align*}
  The above expression is obtained straightforwardly using
  trigonometric identities, and is numerically stable and bounded in its
  evaluation, even as~$\lambda|\vct{r}|\to 0$.  Therefore,  the
  right hand side in \pr{e:flux-last-term} can be stably computed  for any
  value of~$\BeltramiParam$.  Using \pr{e:flux-eq,e:flux-last-term},
  the flux constraints can be re-written as
  \begin{multline}
    \oint\limits_{\partial \CrossSection_i} \dotprod{\Imag \SL{\BeltramiParam}[\vector{m}_0 + \vector{m}_H]}{\VecLengthElem}
    + \frac{1}{\BeltramiParam} \oint\limits_{\partial \CrossSection_i} \dotprod{\left( \frac{\vector{m}_0}{2} + \Imag \Curl{\SL{\BeltramiParam}[\vector{m}_0]} \right)}{\VecLengthElem}  %\\
    + \oint\limits_{\partial \CrossSection_i} \dotprod{\Imag \Curl{\left( \frac{\SL{\BeltramiParam}-\SL{0}}{\BeltramiParam} \right) [\vector{m}_H]}}{\VecLengthElem}
    = \Flux_i,
    \label{e:circ-constr}
  \end{multline}
  for $i = 1, \dots, \Nsurf$.  The complete formulation for computing
  Taylor states is given by the boundary integral equation
  \pr{e:taylor-second-kind} and the flux conditions
  \pr{e:circ-constr}.  We will discuss how to discretize and solve
  these equations to obtain the required numerical solution in
  \pr{s:algo}.
%>>>

\subsubsection{Vacuum fields \label{sss:vacuum-formulation}} %<<<
We now briefly discuss the special case where $\BeltramiParam =
0$. The magnetic field $\vector{B}$ then satisfies
$\nabla\times\vector{B}=\vector{0}$, i.e. it is a vacuum field. In
this case, the boundary limit of the integral representation in
\pr{e:B-bdry-integ} for $\vector{B}$ simplifies to
\begin{align}
  \vector{B}(\sigma, \vector{\alpha}) &= -\frac{\sigma}{2}\Normal + \frac{\vector{m}_{H}}{2} - \Grad{\SL{0}[\sigma]} + \Imag \Curl{\SL{0}[\vector{m}_{H}]} & \text{on \Boundary},
  \label{e:B-bdry-integ-vacuum}
\end{align}
where $\vector{m}_{H}(\vector{\alpha})$ is as defined in \pr{e:mh-field}.
The boundary condition $\dotprod{\vector{B}}{\Normal} = 0$ on $\Boundary$
results in the following second-kind integral equation,
\begin{equation}
  -\frac{\sigma}{2} + \conv{K}[\sigma, \vector{\alpha}] = 0,
  \label{e:vacuum-second-kind}
\end{equation}
where $\conv{K}$ is a compact boundary integral operator given by,
\begin{align}
  \conv{K}[\sigma, \vector{\alpha}] &= -\partial_{\Normal}{\SL{0}[\sigma]}
     + \Imag \dotprod{\Normal}{\Curl{\SL{0}[\vector{m}_{H}]}} .
                                      \label{e:K-op-vacuum}
\end{align}
The flux constraints in \pr{e:circ-constr} cannot be used directly for
vacuum fields since we must first explicitly compute the limit
$\BeltramiParam \to 0$. To avoid this tedious calculation, we propose
a more straightforward method.
We begin by defining a surface vector
field~$\vector{j} = \cross{\Normal}\vector{B}$ along~$\Boundary$, and
then by using a Green's theorem for magnetostatic
fields~\cite{chew1999} we obtain that
$\vector{B} = \Curl{\SL{0}[\vector{j}}]$ in~$\Domain$.  Notice that
$\vct{j}$ is not the same as $\vct{m}_H$.  This relation allows us to
use Stokes' theorem to compute the flux of $\vector{B}$ through a
cross section $\CrossSection$ as the circulation of
$\SL{0}[\vector{j}]$ on $\partial\CrossSection$.  The flux constraints
can then be written as,
\begin{equation}
  \int\limits_{\CrossSection} \dotprod{\vector{B}}{\D\vct{a}}
  = \oint\limits_{\partial \CrossSection} \dotprod{\SL{0}[\vector{j}(\sigma, \vector{\alpha})]}{\VecLengthElem}
  = \Flux_i.
  \label{e:vacuum-flux-constr}
\end{equation}
We discretize and solve \pr{e:vacuum-second-kind,e:vacuum-flux-constr} for the
unknowns $\sigma$ and $\alpha$.
This is discussed in the next section.
The magnetic field $\vector{B}$ on $\Boundary$ can then be computed using
\pr{e:B-bdry-integ-vacuum}.  Notice that in this formulation we do not need to
solve a Laplace-Beltrami problem to evaluate the boundary integral
operator~$\conv{K}$, and the surface convolution operator $\SL{0}$ computes
convolutions with the single-layer Laplace kernel which is much less expensive
to compute than the Helmholtz kernel.
%>>>

\section{A fully 3D Taylor-state solver \label{s:algo}}
% vim: set foldmethod=marker foldmarker=<<<,>>>

In this section, we  describe a solver for computing Taylor
states in 3D based on the integral equation formulation of the
previous section.  We give a brief overview of the algorithm in
\pr{ss:overview-algo}, and then discuss its building blocks: the
singular quadrature algorithm in \pr{ss:quad-algo}, the spectral
Laplace-Beltrami solver in \pr{ss:lb-algo}, and the
computation of harmonic
vector fields in \pr{ss:harmonic-vec-algo}.  Finally, we summarize the
overall algorithm in \pr{ss:taylor-algo}.

\subsection{An overview of the algorithm\label{ss:overview-algo}} %<<<
We now give a brief overview of our method.  We  discuss the
scheme used for discretizing the boundary data and define the
discretized operators acting on this data, and then discuss how
to solve the resulting discretized linear system.

\subsubsection{Discretization} %<<<
One significant advantage that the boundary integral formulation of
the previous section has over standard PDE formulations is that only
the surface of the domain $\Domain$ needs to be discretized, yielding
an immediate dimension reduction in the number of unknowns.
As discussed in \pr{ss:formulation}, the
boundary is a disjoint union of $\Nsurf$ toroidal
(i.e. doubly-periodic) surfaces.  We
parameterize each surface $\Boundary_i$ by a pair of toroidal and
poloidal angles $(\TAngle,\PAngle) \in [0, 2\pi)^2$, and  each 
surface $\Boundary_i$ is parameterized using a doubly periodic function
$\SCoord_i(\TAngle,\PAngle) \in \Boundary_i$.  Similarly, any function
$f$ along the boundary $\Boundary_i$ can also be parameterized by the
toroidal and poloidal angles so that
$\scalar{f}(\TAngle,\PAngle) =
\scalar{f}(\SCoord_i(\TAngle,\PAngle))$.  We assume that the surface
geometry and the parametrization is oriented and non-degenerate such that
$\det{\linop{\MetricTensor}(\TAngle,\PAngle)} > 0$ for all
$(\TAngle,\PAngle) \in [0, 2\pi)^2$, where $\linop{\MetricTensor}$ is
the metric tensor. % smooth invertible parameterization

We discretize each surface by sampling on a uniform grid of points in
the parameter space.  The discretization of
a function~$f$ along a toroidal surface is denoted by~$\vectord{f}$
and is obtained by sampling the function on
an $\Nt \times \Np$ uniform grid,
\begin{align*}
  \scalard{f}\nm[i][j] = \scalar{f}(\TAngle_i, \PAngle_j)
  \qquad \text{for } i=0,\cdots,\Nt-1 \text{ and } j=0,\cdots,\Np-1,
\end{align*}
where $\TAngle_i = 2\pi i / \Nt$ and $\PAngle_j = 2\pi j / \Np$.
Spectrally accurate Fourier approximations of smooth functions~$f$ can
then be
obtained by computing the expansion
\begin{align*}
  \scalar{f}(\TAngle, \PAngle) &\approx \sum\limits_{n=0}^{\Nt-1} \sum\limits_{m=0}^{\Np-1} \fourier{\scalard{f}}\nm e^{\Imag (n \TAngle + m \PAngle)} ,
\end{align*}
where the coefficients $\fourier{\scalard{f}}\nm$ are computed from
the grid values through a discrete Fourier transform,
\begin{align*}
  \fourier{\scalard{f}}\nm &= \frac{1}{2 \pi \Nt \Np}
                             \sum\limits_{i=0}^{\Nt-1}
                             \sum\limits_{j=0}^{\Np-1}
                             \scalard{f}\nm[i][j] \, e^{\Imag (n \TAngle_i + m \PAngle_j)}
    .
\end{align*}
We will denote this Fourier transform operation by
$\fourier{\vectord{f}} = \FFT{\vectord{f}}$ and the inverse operation
by $\vectord{f} = \IFFT{\fourier{\vectord{f}}}$.  This scheme is used
to approximate the surface geometry $\SCoord(\TAngle,\PAngle)$,
discretize the unknown $\sigma(\TAngle,\PAngle)$ in our boundary
integral formulation, and obtain the final solution
$\vector{B}(\TAngle,\PAngle)$ on the boundary. 
We will use $\Nunknown$ to denote the total number of discretization
points across all surfaces
$\{ \Boundary_1, \cdots, \Boundary_{\Nsurf} \}$.  This discretization
scheme of our boundary integral formulation \pr{e:taylor-second-kind}
results in a pseudo-spectral Nystr\"om-like discretization scheme
where the unknowns are point-values of~$\sigma$ and the boundary
conditions are enforced point-wise at the $\Nunknown$ surface
discretization points, but intermediate operations (e.g. forming the
system matrix) are carried out spectrally.
% >>>

\subsubsection{Pseudo-spectral differentiation} %<<<
Our discretization scheme allows us to easily compute derivatives of
functions on toroidal surfaces through diagonal operators acting on
the Fourier coefficients at a modest loss of numerical accuracy.
Therefore, for a function
$\scalar{f}(\TAngle, \PAngle)$ with discretization $\vectord{f}$ on an
$\Nt \times \Np$ grid and with the Fourier coefficients given by
$\fourier{\vectord{f}} = \FFT{\vectord{f}}$, we can compute
\begin{align*}
  \left( \fourier{\vectord{f}_{\TAngle}} \right)\nm = \Imag~n~\fourier{\scalard{f}}\nm, &&
\left( \fourier{\vectord{f}_{\PAngle}} \right)\nm = \Imag~m~\fourier{\scalard{f}}\nm
\end{align*}
where $\fourier{\vectord{f}_{\TAngle}}$ and
$\fourier{\vectord{f}_{\PAngle}}$ are the Fourier coefficients for
$\partial\scalar{f}/\partial{\TAngle}$ and
$\partial\scalar{f}/\partial{\PAngle}$ respectively.  The 
derivatives on the regular $\Nt \times \Np$ grid can then be evaluated using the inverse Fourier transform,
$\vectord{f}_{\TAngle} = \IFFT{\fourier{\vectord{f}_{\TAngle}}}$ and
$\vectord{f}_{\PAngle} = \IFFT{\fourier{\vectord{f}_{\PAngle}}}$.  We
will denote the spectral differentiation operators by the
notation $\vectord{f}_{\TAngle} = \fftdiff{\vectord{f}}{\TAngle}$ and
$\vectord{f}_{\PAngle} = \fftdiff{\vectord{f}}{\PAngle}$.  When
implemented using the Fast Fourier Transform (\abbrev{FFT}), these
operators require $\bigO{\Nunknown \log \Nunknown}$ work for
discretizations on grids with $\Nunknown$ points.  In our
implementation, we use the multithreaded \abbrev{FFTW} library
\cite{FFTW3_2005} to efficiently compute the Fourier transform and its
inverse.

    For a boundary $\Boundary$ with the discretized surface points given by \vectord{\SCoord}, the pseudo-spectral differentiation operators can be used to compute tangent vectors evaluated at the discretization points,
    \begin{align*}
      \vectord{\SCoord}_\TAngle = \fftdiff{\vectord{\SCoord}}{\TAngle}, &&
      \text{and} &&
      \vectord{\SCoord}_\PAngle = \fftdiff{\vectord{\SCoord}}{\PAngle}
      .
    \end{align*}
    We can then also compute the normal vector $\vectord{\Normal}$ and the metric tensor $\vectord{\MetricTensor}$ at each discretization point on the surface. We start from the continuous expressions,
    \begin{align}
      \Normal = \frac{\cross{\SCoord_\TAngle}{\SCoord_\PAngle}}{\|\cross {\SCoord_\TAngle}{\SCoord_\PAngle}\|} &&
      \text{and} &&
      \MetricTensor =
      \begin{bmatrix}
        \|\SCoord_\TAngle\|^2  & \SCoord_\TAngle\cdot \SCoord_\PAngle \\[1ex]
        \SCoord_\TAngle\cdot\SCoord_\PAngle & \|\SCoord_\PAngle\|^2 \\
      \end{bmatrix}
      ,
      \label{e:normal-metrictensor}
    \end{align}
    where $\SCoord_{\TAngle} = \pderiv{\SCoord}{\TAngle}$ and
    $\SCoord_{\PAngle} = \pderiv{\SCoord}{\PAngle}$ are the continuous surface
    tangent vectors. Evaluating the expressions above pointwise for each discretization grid point and for the corresponding components of $\vectord{\SCoord}_\TAngle$ and $\vectord{\SCoord}_\PAngle$ gives the discrete vector $\vectord{\Normal}$ and discrete metric tensor $\vectord{\MetricTensor}$. If necessary, we flip the direction of the normal vectors so that
    they always point outward from the domain $\Domain$ (this is
     not necessary if the parameterization of~$\Boundary$ was properly
     oriented).
    We will use $\vectord{\Normal}$ and $\vectord{\MetricTensor}$ to construct the discretized operators required in our boundary integral formulation.
  %>>>

\subsubsection{Discretized operators} %<<<

The boundary integral formulation of this work requires that we
discretize the linear compact operator $\conv{K}$ defined in
\pr{e:K-op} for Taylor states and in \pr{e:K-op-vacuum} for vacuum
fields.  To do this, we need to numerically apply the layer-potential
operators ~$\SL{\BeltramiParam}$,
~$\partial_{\Normal}\SL{\BeltramiParam}$ ~and
~$\Curl{\SL{\BeltramiParam}}$.
    %We will denote the discrete layer-potential operators by $\linop{S}$, $\linop{S_n}$ and $\linop{S_{curl}}$ respectively.
    These operators compute singular integrals on the boundary and therefore require special quadratures.
    We will discuss the details of their implementation in \pr{ss:quad-algo}.
    In addition, for Taylor states, we also need to compute the surface vector field $\vector{m}$, as described in \pr{e:m-field,e:m0-field}.
    This requires the discrete surface gradient operator $\SurfGrad{}$, the inverse surface Laplacian $\InvSurfLap{}$ and the harmonic vector fields $\{ \vector{m}^{1}_H, \cdots, \vector{m}^{\Nsurf}_H \}$.
    We will discuss the implementation of operators $\SurfGrad{}$ and $\InvSurfLap{}$ in \pr{ss:lb-algo} and the computation of harmonic vector fields in \pr{ss:harmonic-vec-algo}.
    Constructing the discrete operator $\conv{K}$ from these building
    blocks is then straightforward.

    To discretize the flux constraints in \pr{e:circ-constr} and   
    \pr{e:vacuum-flux-constr}, we again
    need the layer-potential operators discussed above. The
    circulation integral is evaluated using a trapezoidal quadrature
    rule, which is spectrally accurate for this smooth periodic data.
  %>>>

  \subsubsection{Linear solver} %<<<
  After discretization, the linear system can be written in a block
  matrix form as,
    \begin{align}
      \begin{bmatrix}
        \linop{A_{11}} & \linop{A_{12}} \\
        \linop{A_{21}} & \linop{A_{22}} \\
      \end{bmatrix}
      \begin{bmatrix}
        \vector{\sigma} \\
        \vector{\alpha} \\
      \end{bmatrix}
      =
      \begin{bmatrix}
        \vector{0} \\
        \vector{\Flux}\\
      \end{bmatrix}
      ,
      \label{e:block-matrix}
    \end{align}
    where the first block-row corresponds to the boundary condition in
    \pr{e:taylor-second-kind} and the second block-row corresponds to
    the flux constraints in \pr{e:circ-constr} for Taylor states; and likewise
    for the vacuum field formulation \pr{e:vacuum-second-kind,e:vacuum-flux-constr}.  Let $\Nsurf$ be the
    number of toroidal surfaces and $\Nunknown$ be the total number of
    surface discretization points.  Then, the RHS boundary condition
    $\vector{0}$ is a column vector of length $\Nunknown$ and the
    prescribed flux conditions $\vector{\Flux}$ is a column vector of
    length $\Nsurf$.  Similarly, the unknown $\vector{\sigma}$ is a
    column vector of length $\Nunknown$ and $\vector{\alpha}$ is a
    column vector of length $\Nsurf$.
    %The operators $\linop{A_{11}}$, $\linop{A_{12}}$, $\linop{A_{21}}$ and $\linop{A_{22}}$ are defined as,
    %\begin{align*}
    %  \linop{A_{11}} \vector{\sigma} &=
    %    \left(
    %          \frac{\linop{I}}{2}
    %        - \linop{S_n}
    %    \right) \vector{\sigma}
    %    + \Imag
    %    \left(
    %          \BeltramiParam \dotprod{\vectord{\Normal}}{ \linop{S} }
    %        + \dotprod{\vectord{\Normal}}{ \linop{S_{curl}} }
    %    \right) \vectord{m}_0(\vector{\sigma})
    %    \\
    %  \linop{A_{12}} \vector{\alpha} &=
    %    \Imag
    %    \left(
    %          \BeltramiParam \dotprod{\vectord{\Normal}}{ \linop{S} }
    %        + \dotprod{\vectord{\Normal}}{ \linop{S_{curl}} }
    %    \right) \vectord{m}_H(\vector{\alpha})
    %    \\
    %  \linop{A_{21}} \vector{\sigma} &=  \\
    %  \linop{A_{22}} \vector{\alpha} &=  \\
    %\end{align*}
    %where
    %~$\vectord{m}_0(\vector{\sigma}) = $
    %~and
    %~$\vectord{m}_H(\vector{\alpha}) = $.

    For the problems considered in this work, $\Nsurf$ is small
    ($\Nsurf \leq 3$); however, the number of discretization points
    $\Nunknown$ can be as large as $\bigO{1\pexp5}$.  It is
    computationally inefficient to directly construct the boundary
    integral operator $\linop{A_{11}}$, with dimensions
    $\Nunknown \times \Nunknown$, and it is instead implemented in a
    matrix-free form.  Then, to solve \pr{e:block-matrix}, we first
    solve the following linear system for $\linop{D}$,
    \begin{align}
      \linop{A_{11}} \linop{D} = -\linop{A_{12}}
      \label{e:block-solve0}
    \end{align}
    where $\linop{D}$ is a matrix of size $\Nunknown \times \Nsurf$.
    We solve \pr{e:block-solve0} using \abbrev{GMRES} and this requires one solve for each of the $\Nsurf$ columns of $\linop{D}$.
    Then, we compute the unknowns $\vector{\sigma}$ and $\vector{\alpha}$ as follows,
    \begin{align}
      \vector{\alpha} &= \left( \linop{A_{21}} \linop{D} + \linop{A_{22}} \right)^{-1} \vector{\Flux}, \label{e:block-solve1} \\
      \vector{\sigma} &= \linop{D} \vector{\alpha}, \label{e:block-solve2}
    \end{align}
    where the inverse of the $\Nsurf \times \Nsurf$ matrix in
    \pr{e:block-solve1} is computed directly.
  %>>>

\subsubsection{Evaluating \vector{B}} %<<<

Once the unknowns $\vector{\sigma}$ and $\vector{\alpha}$ have been
computed, we can evaluate the field $\vector{B}$ on the boundary
$\Boundary$ by discretizing and evaluating \pr{e:B-bdry-integ}.  This
is analogous to the discretization of the boundary integral operator
discussed above.
To evaluate $\vector{B}$ at off-surface points, we discretize and
evaluate the representation in \pr{e:debye-taylor}.  In our current
implementation, we evaluate the layer-potentials at off-surface points
using trapezoidal quadratures on an upsampled mesh.  If the
off-surface points are close to the surface, then the upsample factor
must be large and this scheme becomes very expensive.  In the future,
for such cases, we plan to use specialized quadratures for
near-singular evaluation such as Quadrature by Expansion
(QBX)~\cite{Kl_ckner_2013,Epstein_2013,Rachh_2017}.
  %>>>

%>>>

\subsection{High-order singular quadrature along surfaces\label{ss:quad-algo}} %<<<
\newcommand{\Kernel}{\mathcal{G}}
\newcommand{\polar}[1]{\ensuremath{#1^{p}}}
\newcommand{\PInterp}{\ensuremath{\linopd{R}}}
\newcommand{\Density}{\ensuremath{f}}
\newcommand{\BoundaryIntegral}{\ensuremath{\mathsf{U}}}
\newcommand{\SmoothIntegral}{\ensuremath{\mathsf{U_G}}}
\newcommand{\SingularIntegral}{\ensuremath{\mathsf{U_L}}}
\newcommand{\SingularQuad}{\ensuremath{v}}
\newcommand{\PQuadWt}{\ensuremath{\scalard{w}}}
We now describe a quadrature rule to evaluate layer-potentials due to
singular kernel functions.  Our algorithm is adapted from the work of
\cite{Bruno_2001a,Bruno_2001b} for the Helmholtz kernel and the work
of \cite{Ying_2006} for the Stokes kernel.
% We will mention the differences between these methods and our scheme.
\pr{f:sing-quad} gives a brief overview of the algorithm.

  %<<< partition of unity splitting
We consider a single toroidal surface $\Boundary$, parameterized by
the toroidal and poloidal angles $(\TAngle,\PAngle) \in [0,2\pi)^2$
and discretized on a uniform $\Nt \times \Np$ grid as discussed in
\pr{ss:overview-algo}.  The surface position is given by the function
$\SCoord(\TAngle,\PAngle)$; its discretization along the $N$-point
grid is denoted by~$\vectord{\SCoord}$.  And as before, given a
function $\scalar{\Density}(\TAngle,\PAngle)$ along the grid, its
discretization is denoted by~$\vectord{\Density}$.  The
layer-potential due to a kernel function $\Kernel$ at a target point
$\SCoord_0 = \SCoord(\TAngle_0,\PAngle_0)$ on $\Boundary$ is
given by
    \begin{align}
      \Kernel[f](\SCoord_0)
      =
      \int\limits_{[0,2\pi)^2} \!\! \Kernel(\SCoord_0 - \SCoord(\TAngle,\PAngle)) ~ \scalar{\Density}(\TAngle,\PAngle) ~ \sqrt{\det{\linop{\MetricTensor}(\TAngle,\PAngle)}} \D{\TAngle} \D{\PAngle}
      \label{e:sin-bdry-int}
    \end{align}
    where $\linop{\MetricTensor}$ is the metric tensor.  The kernel
    function may be weakly-singular such as the single-layer Helmholtz
    kernel $\HelmKer{\BeltramiParam}$ or of principal-value type such as the
    gradient of the single-layer Helmholtz kernel
    $\Grad{\HelmKer{\BeltramiParam}}$.  To evaluate the
    layer-potential due to such kernels, we use a partition of unity
    to split the boundary integral into two integrals
    $\Kernel[f](\SCoord_0) = \SmoothIntegral + \SingularIntegral$ such
    that $\SmoothIntegral$ is a smooth global integral over the entire
    surface and $\SingularIntegral$ is a singular local integral.  We
    define $\SmoothIntegral$ and $\SingularIntegral$ as
    \begin{align}
      \SmoothIntegral &= \int\limits_{[0,2\pi)^2} \!\! \left( 1 - \pou(\TAngle,\PAngle) \right) ~ \Kernel(\SCoord_0 - \SCoord) ~ \scalar{\Density}(\TAngle,\PAngle) ~ \sqrt{\det{\linop{\MetricTensor}}} \D{\TAngle} \D{\PAngle} \label{e:smooth-int} \\
      \SingularIntegral &= \int\limits_{\fnsupp{\pou}}              \pou(\TAngle,\PAngle)         ~ \Kernel(\SCoord_0 - \SCoord) ~ \scalar{\Density}(\TAngle,\PAngle) ~ \sqrt{\det{\linop{\MetricTensor}}} \D{\TAngle} \D{\PAngle} \label{e:singular-int}
    \end{align}
    where $\pou(\TAngle,\PAngle)$ is called a floating partition of
    unity centered at $(\TAngle_0, \PAngle_0)$ and $\fnsupp{\pou}$ is
    the support of $\pou$.  In order for the first integrand to be
    smooth and the second integrand to have compact support, we
    require that the function $\pou$ be smooth, have compact support
    \fnsupp{\pou}, and that $\pou(\TAngle,\PAngle)=1$ in a neighborhood
    around $(\TAngle_0,\PAngle_0)$.  To realize such a function, we
    define $\pou$ as,
    \begin{align}
      \pou(\TAngle,\PAngle) &= \chi\left( \frac{2}{\patchdim} \sqrt{ \left( \frac{\TAngle-\TAngle_0}{\htor} \right)^2 + \left( \frac{\PAngle - \PAngle_0}{\hpol} \right)^2 } \right)
      \label{e:pou-def}
    \end{align}
    where~$\htor = 2\pi/\Nt$~ and ~$\hpol = 2\pi/\Np$~ are the
    discretization grid spacing in $\TAngle$ and $\PAngle$,
    respectively, and ~$\chi:[0,\infty) \rightarrow [0,1]$ is a smooth
    function such that $\chi(\rho)=1$ in a neighborhood of zero and
    $\chi(\rho)=0$ for $\rho \geq 1$.  The parameter $\patchdim$ is used
    to control the width of $\pou$ so that its support $\fnsupp{\pou}$
    lies on an $\patchdim \times \patchdim$ subset of the original
    $\Nt \times \Np$ discretized grid.  We will next discuss
    quadratures to numerically evaluate the smooth integral
    $\SmoothIntegral$ and the singular integral $\SingularIntegral$.
  %>>>

\begin{figure}[t] %<<< singular-quadrature TODO: make new figs
    \centering
    \includegraphics[width=0.64\textwidth]{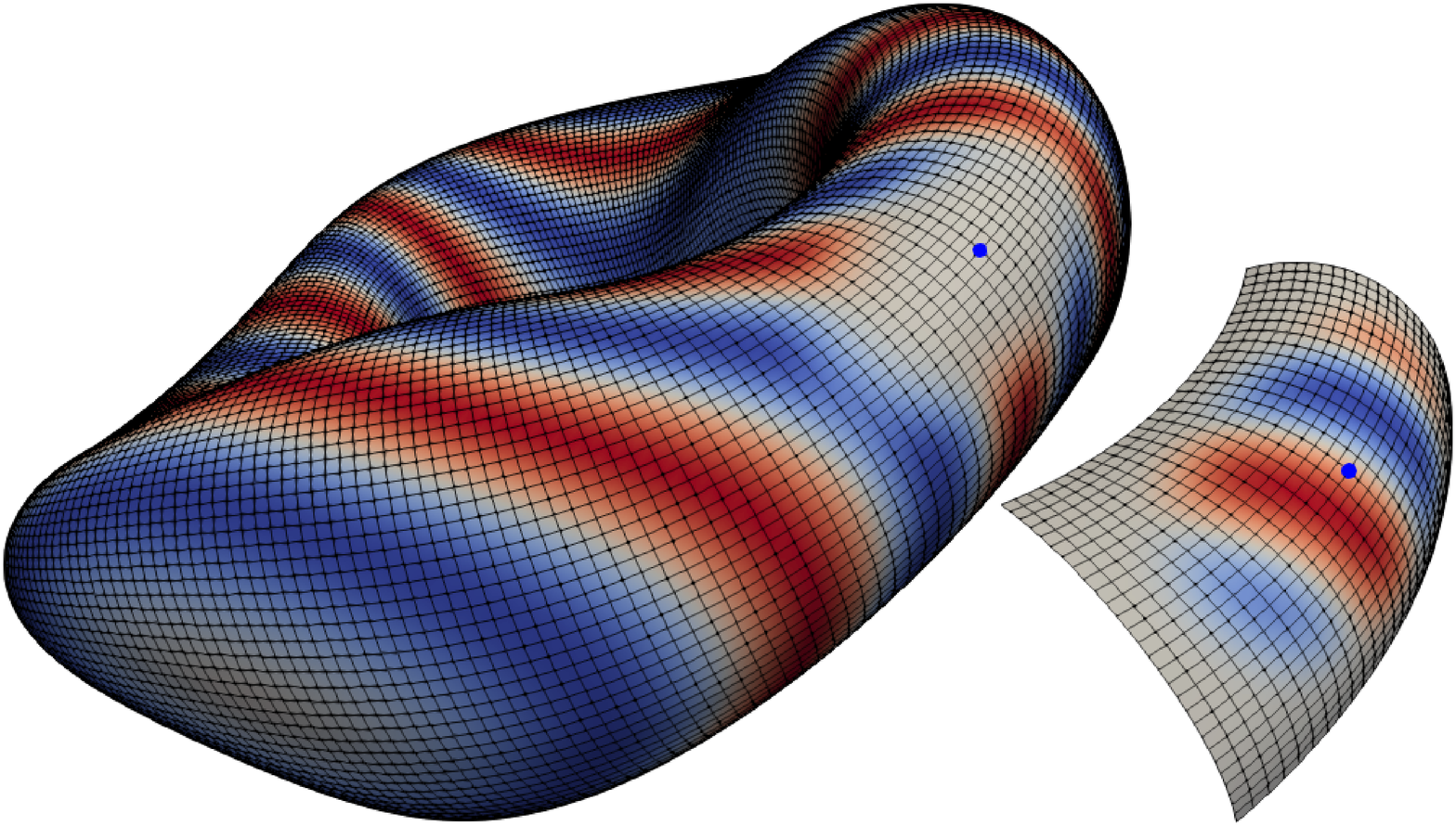}
    \resizebox{0.35\textwidth}{!}{\begin{tikzpicture}[scale=7] %<<<

  \newcommand*\spherex[2]{(sin(#1)*sin(#2))}
  \newcommand*\spherey[2]{(sin(#1)*cos(#2)*cos(15)+sin(15)*cos(#1))}
  \newcommand*\spherez[2]{(cos(#1)*cos(15)+sin(15)*sin(#1)*cos(#2))}

  \newcommand*{\PLat}{60}%{90}
  \newcommand*{\PLon}{35}%{0}
  \newcommand*{\dLat}{12}%
  \newcommand*{\dLon}{8}%
  \newcommand*{\XX}[2]{(sin(#2)*0.5*(0.5+0.5*cos(#1)))}
  \newcommand*{\YY}[2]{(cos(#2)*0.5*(0.5+0.5*cos(#1)))}

  \newcommand*\drawpatch[3]{ %<<<
    \foreach \y in {-0.5,-0.45,...,0.55} {
      \foreach \x in {-0.5,-0.45,...,0.5} {
        \draw[black!100] ({\spherex{#1+\x*#3}{#2+\y*#3}},{\spherez{#1+\x*#3}{#2+\y*#3}}) --  ({\spherex{#1+(\x+0.05)*#3}{#2+\y*#3}},{\spherez{#1+(\x+0.05)*#3}{#2+\y*#3}}) ;
        \draw[black!100] ({\spherex{#1+\y*#3}{#2+\x*#3}},{\spherez{#1+\y*#3}{#2+\x*#3}}) --  ({\spherex{#1+\y*#3}{#2+(\x+0.05)*#3}},{\spherez{#1+\y*#3}{#2+(\x+0.05)*#3}}) ;
      }
    }

    \foreach \p in {10,30,...,370} {
      \foreach \t in {20,25,...,180} {
        \draw[red!100,line width=0.15mm] ({\spherex{#1+\XX{\t}{\p}*#3}{#2+\YY{\t}{\p}*#3}},{\spherez{#1+\XX{\t}{\p}*#3}{#2+\YY{\t}{\p}*#3}}) --  ({\spherex{#1+(\XX{(\t+5.00)}{\p})*#3}{#2+\YY{(\t+5.00)}{\p}*#3}},{\spherez{#1+(\XX{(\t+5.00)}{\p})*#3}{#2+\YY{(\t+5.00)}{\p}*#3}}) ;
      }
    }
    \foreach \p in {10,15,...,370} {
      \foreach \t in {20,40,...,180} {
        \draw[red!100,line width=0.15mm] ({\spherex{#1+\XX{\t}{\p}*#3}{#2+\YY{\t}{\p}*#3}},{\spherez{#1+\XX{\t}{\p}*#3}{#2+\YY{\t}{\p}*#3}}) --  ({\spherex{#1+(\XX{(\t)}{\p+5.00})*#3}{#2+\YY{(\t)}{\p+5.00}*#3}},{\spherez{#1+(\XX{(\t)}{\p+5.00})*#3}{#2+\YY{(\t)}{\p+5.00}*#3}}) ;
      }
    }
    \draw[red,fill=red] ({\spherex{#1+\XX{20}{130}*#3}{#2+\YY{20}{130}*#3}},{\spherez{#1+\XX{20}{130}*#3}{#2+\YY{20}{130}*#3}}) circle [radius=0.2pt];

    \foreach \y in {0.25,0.30,...,0.55} {
      \foreach \x in {-0.45,-0.40,...,-0.20} {
        \draw[black!100,line width=0.2mm] ({\spherex{#1+\y*#3}{#2+\x*#3}},{\spherez{#1+\y*#3}{#2+\x*#3}}) --  ({\spherex{#1+\y*#3}{#2+(\x+0.05)*#3}},{\spherez{#1+\y*#3}{#2+(\x+0.05)*#3}}) ;
      }
    }
    \foreach \y in {0.25,0.30,...,0.5} {
      \foreach \x in {-0.45,-0.40,...,-0.15} {
        \draw[black!100,line width=0.2mm] ({\spherex{#1+\y*#3}{#2+\x*#3}},{\spherez{#1+\y*#3}{#2+\x*#3}}) --  ({\spherex{#1+(\y+0.05)*#3}{#2+\x*#3}},{\spherez{#1+(\y+0.05)*#3}{#2+\x*#3}}) ;
      }
    }
    \foreach \y in {0.25,0.30,...,0.55} {
      \foreach \x in {-0.45,-0.40,...,-0.15} {
        \draw[black,fill=black] ({\spherex{#1+\y*#3}{#2+\x*#3}},{\spherez{#1+\y*#3}{#2+\x*#3}}) circle [radius=0.2pt];
      }
    }

    \draw[blue,fill=blue] ({\spherex{#1}{#2}},{\spherez{#1}{#2}}) circle [radius=0.3pt];

    \node (A) at ({\spherex{#1-0.50*#3}{#2-0.60*#3}},{\spherez{#1-0.50*#3}{#2-0.60*#3}}) {};
    \node (B) at ({\spherex{#1+0.50*#3}{#2-0.55*#3}},{\spherez{#1+0.50*#3}{#2-0.55*#3}}) {};
    \node (C) at ({\spherex{#1+0.55*#3}{#2-0.50*#3}},{\spherez{#1+0.55*#3}{#2-0.50*#3}}) {};
    \node (D) at ({\spherex{#1+0.55*#3}{#2+0.50*#3}},{\spherez{#1+0.55*#3}{#2+0.50*#3}}) {};

    %\path[<->] (A) edge[line width=0.3mm] node[ fill=white, anchor=center, pos=0.5] {\large M} (B);
    %\path[<->] (C) edge[line width=0.3mm] node[ fill=white, anchor=center, pos=0.5] {\large M} (D);
  } %>>>
  \drawpatch{\PLat}{\PLon}{70}

\end{tikzpicture} %>>>}
    \caption{\label{f:sing-quad}
      To evaluate the layer potential $\BoundaryIntegral=\Kernel[f]$ at the blue target point on the surface, we use a partition of unity function $\pou(\TAngle,\PAngle)$ (centered at the target point) to separate the boundary integral into a smooth integral $\SmoothIntegral = \Kernel[(1-\pou)f]$ over the entire surface and a singular integral $\SingularIntegral = \Kernel[\pou f]$ over the support of $\pou$.
      We evaluate the smooth integral $\SmoothIntegral$ using trapezoidal quadrature rule.
      We compute the singular integral $\SingularIntegral$ in polar coordinates using a trapezoidal rule in the angular direction and a Gauss-Legendre quadrature rule in the radial direction.
      The transformation from the regular grid discretization to a polar grid discretization is done using Lagrange interpolation on a $12\times12$ grid.
    }
  \end{figure} %>>>

  \subsubsection{Quadrature for the smooth integrand} %<<<
  The integral in \pr{e:smooth-int} is smooth, and we can therefore
  use standard quadratures for smooth functions to evaluate this
  integral with high accuracy.  Since we already have a uniform
  discretization of periodic functions on an $\Nt \times \Np$ grid,
  the trapezoidal rule is applicable and spectrally convergent:
    \begin{align}
      \SmoothIntegral &~\approx~ \sum\limits_{i,j} \left( 1 - \pou\nm[i][j] \right) \scalard{\Kernel}\nm[i][j] \scalard{\Density}\nm[i][j] \scalard{w}\nm[i][j]
      %
      %&~\approx~
      %\sum\limits_{i,j} \scalard{\Kernel}\nm[i][j] \scalard{w}\nm[i][j] \scalard{\Density}\nm[i][j]
      %-
      %\sum\limits_{i,j} \pou\nm[i][j] \scalard{\Kernel}\nm[i][j] \scalard{w}\nm[i][j] \scalard{\Density}\nm[i][j]
      \label{e:smooth-quad}
    \end{align}
    where ~~$\pou\nm[i][j] = \pou(\TAngle_i,\PAngle_j)$,
    ~~$\scalard{\Kernel}\nm[i][j] = \Kernel(\SCoord_0 -
    \vectord{\SCoord}\nm[i][j])$ ~~and
    ~~$\scalard{w}\nm[i][j] = \frac{4 \pi^2}{\Nt \Np}
    \sqrt{\det{\vectord{\MetricTensor}\nm[i][j]}}$ are the trapezoidal
    quadrature weights times the differential area element.  When the
    discretization is sufficiently fine to resolve the geometry, the
    area element $\sqrt{\det{\linop{\MetricTensor}}}$ and the density
    $\scalar{\Density}$, the quadrature error is then determined by
    the smoothness of the term $\left( 1 - \pou \right) \Kernel$.
    This depends on how well the floating partition of unity $\pou$
    can be resolved on an $\patchdim \times \patchdim$ grid of points
    and the effectiveness of $(1-\pou)$ at screening the kernel
    singularity.  We will discuss the choice of $\pou$ later in this
    section.

    When computing the potential at all grid points, the summation in
    \pr{e:smooth-quad} requires $\bigO{\Nunknown^2}$ total cost for
    $\Nunknown$ grid points.  This cost can be improved to
    $\bigO{\Nunknown}$ by using fast multipole method (FMM)
    acceleration \cite{Greengard_1987,Cheng_2006,Ying_2004}.  However,
    we have not used FMM acceleration in the present work since the
    performance improvement would not be significant for the problem
    sizes considered here.
    %by first computing $\sum\limits_{i,j} \scalard{\Kernel}\nm[i][j] \scalard{\Density}\nm[i][j] \scalard{w}\nm[i][j]$ using FMM
    %and then subtracting $\sum\limits_{i,j} \pou\nm[i][j] \scalard{\Kernel}\nm[i][j] \scalard{\Density}\nm[i][j] \scalard{w}\nm[i][j]$ for each target point since $\pou$ is target specific.
  %>>>

  \subsubsection{Quadrature for the singular integrand} %<<<
  Since the integral kernel $\Kernel$
  %\note{Mike: $\Phi$ was also used
  %  for the flux constraint value, may want to switch notation
  %  somewhere. Dhairya: changed to G.}
  has a singularity, we can not compute
  $\SingularIntegral$ directly using quadrature rules for smooth
  functions.  To evaluate this integral, we apply a change of
  variables and compute the integral in polar coordinates.  The
  coordinate transform removes the leading order kernel singularity
  allowing us to use standard quadratures and achieve high-order
  accuracy.
  In particular, we apply
  the following change of variables,
    \begin{align*}
      \TAngle = \TAngle_0 + (\patchdim / 2) \htor \rho \sin \omega,
      && \text{and} &&
      \PAngle = \PAngle_0 + (\patchdim / 2) \hpol \rho \cos \omega,
    \end{align*}
    where $\rho$ and $\omega$ are the radial and angular coordinate variables in the polar coordinate space and the pole is at the target $\SCoord_0$.
    The boundary integral in \pr{e:singular-int} can then be written as,
    \begin{align}
      \SingularIntegral = \int\limits_{0}^{\pi} \int\limits_{-1}^{1} \chi(\rho) ~ \Kernel(\SCoord_0 - \SCoord) ~ \scalar{\Density}(\rho, \omega) ~ \sqrt{\det{\linop{\MetricTensor}}} ~ \frac{\patchdim^2 \htor \hpol}{4} |\rho| \D{\rho} \D{\omega}
      \label{e:polar-trans}
    \end{align}
    where $\frac{\patchdim^2 \htor \hpol}{4} |\rho|$ is the Jacobian of the transformation.
    For weakly-singular kernels (such as $\HelmKer{\BeltramiParam}$)  with $1/|\SCoord_0 - \SCoord|$ singularity, the Jacobian cancels the singularity making the integrand bounded.
    In fact, in \cite{Bruno_2001a} it was shown that when the kernel $\Kernel$ is the real part of $\HelmKer{\BeltramiParam}$, then the integrand is smooth and periodic in $\rho$.
    This allowed them to use trapezoidal quadratures in both $\rho$ and $\omega$.
    For integral operators such as
    $\Grad_\Gamma \mathcal S_\lambda $, the integrand in
    \pr{e:polar-trans} is still singular and the integral must be
    understood in the Cauchy principal value sense.  In
    \cite{Ying_2006}, the authors showed that for the double-layer
    Stokes pressure kernel (a hypersingular kernel), the integral in
    $\rho$ can be computed using a quadrature rule for smooth
    integrands if the quadrature is symmetric about the origin.  The
    proof relies on showing that the integrand can be written as the
    sum of a smooth function and an antisymmetric singular function.
    The smooth integral can be computed using standard quadratures and
    the singular integral (which has zero principal value) evaluates
    to zero due to the symmetry of the quadrature rule.  The same
    proof applies to many other  kernels, including
    $\Grad{\HelmKer{\BeltramiParam}}$.
    For bounded kernel functions (such as the imaginary part of $\HelmKer{\BeltramiParam}$), the integrand in \pr{e:polar-trans} is only $C^0$ continuous at the origin.
    To avoid having to deal with the real and the imaginary parts of $\HelmKer{\BeltramiParam}$ separately, we compute the integral in $\rho$ separately in each of the two intervals $[-1,0]$ and $[0,1]$ using a Gauss-Legendre quadrature rule.
    For the integral in $\omega$, we use trapezoidal quadrature rule.
    %
    %In our implementation, we use a trapezoidal quadrature rule of order $\quadorder$ to compute the integral in $\omega$ and
    %a Gauss-Legendre quadrature rule of order $\quadorder$ to compute the integral in $\rho$ in the intervals $[-1,0]$ and $[0,1]$.
    %This choice is purely for convenience as it makes our implementation kernel agnostic.
    %We have observed similar accuracy from using Gauss-Legendre quadrature instead of trapezoidal quadrature; however, it simplifies our implementation by allowing us to apply the scheme to both the real (singular) and imaginary (smooth) parts of $\HelmKer{\BeltramiParam}$ and $\Grad{\HelmKer{\BeltramiParam}}$.
    %\note{it simplifies our implementation by allowing us to apply the scheme in a kernel agnostic way.}

    To apply the quadrature in polar coordinates, we need to first
    evaluate the surface position $\SCoord$ and the density $\Density$
    at a new set of quadrature nodes.  This requires interpolation
    from the uniform Cartesian grid discretization in $\TAngle$ and
    $\PAngle$ to a polar grid discretization around the target point.
    For each interpolation point in the polar grid, we select a
    $12 \times 12$ array of values (surrounding the interpolation
    point) in the original discretization and use Lagrange
    interpolation to approximate the value at the interpolation point.
    %We do this using Lagrange interpolation on a $12 \times 12$ grid of values in the original discretization, approximately centered around the evaluation point.
    Since we evaluate the polynomial interpolation close to the center of the interpolation grid, we do not suffer from Runge's phenomenon and can use high-order interpolation.
    Using $\ordinal{12}$ order interpolation provides sufficient accuracy without being too expensive.
    %We denote the interpolation operator from the Cartesian discretization to the polar discretization by $\PInterp$.
    We use $\PInterp$ to denote the interpolation operator that computes the polar grid discretization from the uniform discretization.
    Then, at the polar grid points, the surface position is given by $\polar{\vectord{\SCoord}} = \PInterp \vectord{\SCoord}$ and the density is given by $\polar{\vectord{\Density}} = \PInterp \vectord{\Density}$.
    For the surface normal $\polar{\vectord{\Normal}}$ and the metric tensor $\polar{\vectord{\MetricTensor}}$, interpolating directly from values on the regular grid leads to poor accuracy.
    Instead, we first interpolate the tangent vectors
    $\polar{\vectord{\SCoord}_{\TAngle}} = \PInterp \vectord{\SCoord}_{\TAngle}$ and $\polar{\vectord{\SCoord}_{\PAngle}} = \PInterp \vectord{\SCoord}_{\PAngle}$;
    and then use these to compute $\polar{\vectord{\Normal}}$ and $\polar{\vectord{\MetricTensor}}$ as was done in \pr{e:normal-metrictensor}.

    The polar grid quadrature nodes are given by a tensor-product rule with trapezoidal quadrature of order $\quadorder$ in $\omega$ and Gauss-Legendre quadrature of order $\quadorder$ in $\rho$ in the intervals $[-1,0]$ and $[0,1]$.
    This gives us the quadrature nodes $(\rho_i, \omega_i)$ and the quadrature weights $\PQuadWt_i$ for $i=0, \cdots, 2\quadorder^2-1$.
    Once the values $\polar{\vectord{\SCoord}}$, $\polar{\vectord{\Normal}}$ and $\polar{\vectord{\MetricTensor}}$ have been evaluated at the polar quadrature nodes through the interpolation process discussed above, we can apply the quadrature rule to approximate the integral in \pr{e:polar-trans} as
    ~~$\SingularIntegral \approx \Transpose{\vectord{\SingularQuad}} \polar{\vectord{\Density}}$~
    %\begin{align}
    %  \SingularIntegral \approx \sum\limits_{i=0}^{2 \quadorder^2} \scalard{W}_i \polar{\scalard{\Density}}_i
    %\end{align}
    where for $i=0, \cdots, 2\quadorder^2-1$,
    \begin{align*}
      \scalard{\SingularQuad}_i = \chi(\rho_i) ~ \Kernel(\SCoord_0 -
      \polar{\vectord{\SCoord}}_i) ~ \sqrt{\det
      \vectord{\MetricTensor}_i} ~ \frac{\patchdim^2 \htor \hpol}{4}
      |\rho_i| ~ \PQuadWt_i .
    \end{align*}
    Since the interpolation operator $\PInterp$ is a sparse matrix,
    for a given surface geometry we can easily precompute the
    quantity $\Transpose{\vectord{\SingularQuad}} \PInterp$ and then
    compute
    $\SingularIntegral \approx (\Transpose{\vectord{\SingularQuad}}
    \PInterp) \vectord{\Density}$ several times for different
    densities $\vectord{\Density}$.  In the work of \cite{Bruno_2001a,
      Bruno_2001b} and \cite{Ying_2006}, the interpolation was done by
    first upsampling the Cartesian discretization using \abbrev{FFT}
    and then using low-order polynomial interpolation.  While this may
    make the interpolation less expensive, it results in a dense
    interpolation operator and therefore makes the precomputation
    infeasible.  Our scheme requires $\bigO{\quadorder^2}$ cost per
    target for the precomputation and then the quadrature can be
    applied several times (with $\bigO{\patchdim^2}$ cost per target)
    without the expensive kernel evaluations and interpolation of the
    density $\vectord{\Density}$ in each application.
    For a specified quadrature accuracy and a given surface, the
    optimal choice of the parameters $\patchdim$ and $\quadorder$ must
    be determined empirically.  From~\cite{Ying_2006}, to obtain
    convergence as the mesh is refined, a good rule of thumb for scaling
    the parameters is given by ~$\patchdim = \bigO{\Nunknown^{1/4}}$~
    and ~$\quadorder = \bigO{\Nunknown^{1/4}}$, where
    $\Nunknown = \Nt \Np$ is the number of surface mesh points.  From
    the error analysis in \cite{Ying_2006}, we expect spectral
    convergence with mesh refinement as we also appropriately scale
    the parameters $\patchdim$ and $\quadorder$.
  %>>>

  \begin{figure} %<<< POU-comparison
    \centering
    \resizebox{0.46\textwidth}{!}{\begin{tikzpicture} %<<<
    	\begin{axis}[ xlabel={$\vector{r}$}, ylabel={$\chi(\vector{|r|})$} ]
      \addplot[line width=0.25mm, color=blue] coordinates { %<<<
        (-1.00, 2.3195e-16)
        (-0.98, 4.9964e-14)
        (-0.96, 5.2646e-12)
        (-0.94, 2.9487e-10)
        (-0.92, 9.4630e-09)
        (-0.90, 1.8613e-07)
        (-0.88, 2.3837e-06)
        (-0.86, 2.0982e-05)
        (-0.84, 1.3322e-04)
        (-0.82, 6.3689e-04)
        (-0.80, 2.3820e-03)
        (-0.78, 7.2091e-03)
        (-0.76, 1.8189e-02)
        (-0.74, 3.9278e-02)
        (-0.72, 7.4278e-02)
        (-0.70, 1.2552e-01)
        (-0.68, 1.9286e-01)
        (-0.66, 2.7358e-01)
        (-0.64, 3.6302e-01)
        (-0.62, 4.5565e-01)
        (-0.60, 5.4626e-01)
        (-0.58, 6.3064e-01)
        (-0.56, 7.0597e-01)
        (-0.54, 7.7083e-01)
        (-0.52, 8.2493e-01)
        (-0.50, 8.6882e-01)
        (-0.48, 9.0353e-01)
        (-0.46, 9.3037e-01)
        (-0.44, 9.5068e-01)
        (-0.42, 9.6574e-01)
        (-0.40, 9.7668e-01)
        (-0.38, 9.8447e-01)
        (-0.36, 9.8990e-01)
        (-0.34, 9.9359e-01)
        (-0.32, 9.9605e-01)
        (-0.30, 9.9764e-01)
        (-0.28, 9.9864e-01)
        (-0.26, 9.9925e-01)
        (-0.24, 9.9960e-01)
        (-0.22, 9.9980e-01)
        (-0.20, 9.9991e-01)
        (-0.18, 9.9996e-01)
        (-0.16, 9.9998e-01)
        (-0.14, 9.9999e-01)
        (-0.12, 1.0000e+00)
        (-0.10, 1.0000e+00)
        (-0.08, 1.0000e+00)
        (-0.06, 1.0000e+00)
        (-0.04, 1.0000e+00)
        (-0.02, 1.0000e+00)
        ( 0.00, 1.0000e+00)
        ( 0.02, 1.0000e+00)
        ( 0.04, 1.0000e+00)
        ( 0.06, 1.0000e+00)
        ( 0.08, 1.0000e+00)
        ( 0.10, 1.0000e+00)
        ( 0.12, 1.0000e+00)
        ( 0.14, 9.9999e-01)
        ( 0.16, 9.9998e-01)
        ( 0.18, 9.9996e-01)
        ( 0.20, 9.9991e-01)
        ( 0.22, 9.9980e-01)
        ( 0.24, 9.9960e-01)
        ( 0.26, 9.9925e-01)
        ( 0.28, 9.9864e-01)
        ( 0.30, 9.9764e-01)
        ( 0.32, 9.9605e-01)
        ( 0.34, 9.9359e-01)
        ( 0.36, 9.8990e-01)
        ( 0.38, 9.8447e-01)
        ( 0.40, 9.7668e-01)
        ( 0.42, 9.6574e-01)
        ( 0.44, 9.5068e-01)
        ( 0.46, 9.3037e-01)
        ( 0.48, 9.0353e-01)
        ( 0.50, 8.6882e-01)
        ( 0.52, 8.2493e-01)
        ( 0.54, 7.7083e-01)
        ( 0.56, 7.0597e-01)
        ( 0.58, 6.3064e-01)
        ( 0.60, 5.4626e-01)
        ( 0.62, 4.5565e-01)
        ( 0.64, 3.6302e-01)
        ( 0.66, 2.7358e-01)
        ( 0.68, 1.9286e-01)
        ( 0.70, 1.2552e-01)
        ( 0.72, 7.4278e-02)
        ( 0.74, 3.9278e-02)
        ( 0.76, 1.8189e-02)
        ( 0.78, 7.2091e-03)
        ( 0.80, 2.3820e-03)
        ( 0.82, 6.3689e-04)
        ( 0.84, 1.3322e-04)
        ( 0.86, 2.0982e-05)
        ( 0.88, 2.3837e-06)
        ( 0.90, 1.8613e-07)
        ( 0.92, 9.4630e-09)
        ( 0.94, 2.9487e-10)
        ( 0.96, 5.2646e-12)
        ( 0.98, 4.9964e-14)
        ( 1.00, 2.3195e-16)
      }; %>>>
      \addplot[line width=0.25mm, color=red] coordinates { %<<<
        (-1.00, 0.00000)
        (-0.98, 0.00000)
        (-0.96, 0.00000)
        (-0.94, 0.00001)
        (-0.92, 0.00022)
        (-0.90, 0.00138)
        (-0.88, 0.00475)
        (-0.86, 0.01149)
        (-0.84, 0.02235)
        (-0.82, 0.03756)
        (-0.80, 0.05698)
        (-0.78, 0.08026)
        (-0.76, 0.10694)
        (-0.74, 0.13650)
        (-0.72, 0.16845)
        (-0.70, 0.20237)
        (-0.68, 0.23783)
        (-0.66, 0.27450)
        (-0.64, 0.31208)
        (-0.62, 0.35029)
        (-0.60, 0.38892)
        (-0.58, 0.42777)
        (-0.56, 0.46665)
        (-0.54, 0.50542)
        (-0.52, 0.54390)
        (-0.50, 0.58197)
        (-0.48, 0.61946)
        (-0.46, 0.65624)
        (-0.44, 0.69214)
        (-0.42, 0.72699)
        (-0.40, 0.76062)
        (-0.38, 0.79283)
        (-0.36, 0.82341)
        (-0.34, 0.85213)
        (-0.32, 0.87878)
        (-0.30, 0.90310)
        (-0.28, 0.92487)
        (-0.26, 0.94390)
        (-0.24, 0.96002)
        (-0.22, 0.97315)
        (-0.20, 0.98330)
        (-0.18, 0.99062)
        (-0.16, 0.99541)
        (-0.14, 0.99816)
        (-0.12, 0.99945)
        (-0.10, 0.99990)
        (-0.08, 0.99999)
        (-0.06, 1.00000)
        (-0.04, 1.00000)
        (-0.02, 1.00000)
        ( 0.00, 1.00000)
        ( 0.02, 1.00000)
        ( 0.04, 1.00000)
        ( 0.06, 1.00000)
        ( 0.08, 0.99999)
        ( 0.10, 0.99990)
        ( 0.12, 0.99945)
        ( 0.14, 0.99816)
        ( 0.16, 0.99541)
        ( 0.18, 0.99062)
        ( 0.20, 0.98330)
        ( 0.22, 0.97315)
        ( 0.24, 0.96002)
        ( 0.26, 0.94390)
        ( 0.28, 0.92487)
        ( 0.30, 0.90310)
        ( 0.32, 0.87878)
        ( 0.34, 0.85213)
        ( 0.36, 0.82341)
        ( 0.38, 0.79283)
        ( 0.40, 0.76062)
        ( 0.42, 0.72699)
        ( 0.44, 0.69214)
        ( 0.46, 0.65624)
        ( 0.48, 0.61946)
        ( 0.50, 0.58197)
        ( 0.52, 0.54390)
        ( 0.54, 0.50542)
        ( 0.56, 0.46665)
        ( 0.58, 0.42777)
        ( 0.60, 0.38892)
        ( 0.62, 0.35029)
        ( 0.64, 0.31208)
        ( 0.66, 0.27450)
        ( 0.68, 0.23783)
        ( 0.70, 0.20237)
        ( 0.72, 0.16845)
        ( 0.74, 0.13650)
        ( 0.76, 0.10694)
        ( 0.78, 0.08026)
        ( 0.80, 0.05698)
        ( 0.82, 0.03756)
        ( 0.84, 0.02235)
        ( 0.86, 0.01149)
        ( 0.88, 0.00475)
        ( 0.90, 0.00138)
        ( 0.92, 0.00022)
        ( 0.94, 0.00001)
        ( 0.96, 0.00000)
        ( 0.98, 0.00000)
        ( 1.00, 0.00000)
      }; %>>>
      \end{axis}
    \end{tikzpicture}} %>>>
    \hfill
    \resizebox{0.48\textwidth}{!}{\begin{tikzpicture} %<<<
      \begin{semilogyaxis}[ xlabel={$k$}, ylabel={$|~\fourier{\chi}_k~|$} ]
      \addplot[line width=0.25mm, color=blue] coordinates { %<<<
        ( 0, 1.2034e+03)
        ( 4, 8.5791e+01)
        ( 8, 2.8023e+00)
        (12, 2.3454e-01)
        (16, 3.1457e-02)
        (20, 3.0184e-05)
        (24, 5.7921e-04)
        (28, 9.6271e-05)
        (32, 5.9175e-06)
        (36, 3.0748e-07)
        (40, 5.6965e-08)
        (44, 9.1146e-10)
        (48, 3.3848e-10)
        (52, 1.3372e-11)
        (56, 1.2160e-12)
        (60, 1.3962e-13)
        (64, 3.8051e-15)
      }; %>>>
      \addplot[line width=0.25mm, color=red] coordinates { %<<<
        (  0, 1.0757e+03)
        (  4, 2.2502e+00)
        (  8, 1.1128e+00)
        ( 12, 1.3686e-01)
        ( 16, 7.0827e-02)
        ( 20, 7.3460e-03)
        ( 24, 4.6009e-03)
        ( 28, 3.0312e-03)
        ( 32, 9.2658e-04)
        ( 36, 3.6808e-05)
        ( 40, 1.5348e-04)
        ( 44, 1.1759e-04)
        ( 48, 5.4795e-05)
        ( 52, 1.5493e-05)
        ( 56, 1.0998e-06)
        ( 60, 5.1995e-06)
        ( 64, 4.4299e-06)
        ( 68, 2.6194e-06)
        ( 72, 1.1627e-06)
        ( 76, 3.1283e-07)
        ( 80, 6.8646e-08)
        ( 84, 1.7908e-07)
        ( 88, 1.6612e-07)
        ( 92, 1.1489e-07)
        ( 96, 6.4758e-08)
        (100, 2.8744e-08)
        (104, 7.4657e-09)
        (108, 2.8062e-09)
        (112, 6.2896e-09)
        (116, 6.2682e-09)
        (120, 4.8499e-09)
        (124, 3.1858e-09)
        (128, 1.7889e-09)
        (132, 8.0567e-10)
        (136, 2.0599e-10)
        (140, 1.0450e-10)
        (144, 2.2582e-10)
        (148, 2.3920e-10)
        (152, 2.0139e-10)
        (156, 1.4727e-10)
        (160, 9.5518e-11)
        (164, 5.4113e-11)
        (168, 2.4835e-11)
        (172, 6.3556e-12)
        (176, 3.8260e-12)
        (180, 8.3022e-12)
        (184, 9.2539e-12)
        (188, 8.3153e-12)
        (192, 6.5820e-12)
        (196, 4.7144e-12)
        (200, 3.0547e-12)
        (204, 1.7487e-12)
        (208, 8.1635e-13)
        (212, 2.1128e-13)
        (216, 1.4078e-13)
        (220, 3.1078e-13)
        (224, 3.6170e-13)
        (228, 3.4504e-13)
        (232, 2.8892e-13)
        (236, 2.2237e-13)
        (240, 1.6092e-13)
        (244, 1.0137e-13)
        (248, 5.7956e-14)
        (252, 2.8744e-14)
        (256, 6.9542e-15)
      }; %>>>
      \legend{current work, {Bruno and Kunyansky}}
      %\pgfplotslegendfromname{\cite{Bruno_2001b}}
      \end{semilogyaxis}
    \end{tikzpicture}} %>>>
  \caption{\label{f:pou-comparison} We compare the function $\chi$ as
    defined in this work with the one used in \cite{Bruno_2001b}.  The
    partition of unity function used in this work is close to $1$ in a
    larger region around the origin, and is more effective in
    separating the singular and smooth parts of the kernel function.
    Furthermore, the periodic Fourier coefficients
    $\fourier{\vector{\chi}}$ decay more rapidly, yielding a higher
    resolution splitting for the same number of grid points.}
  \end{figure}
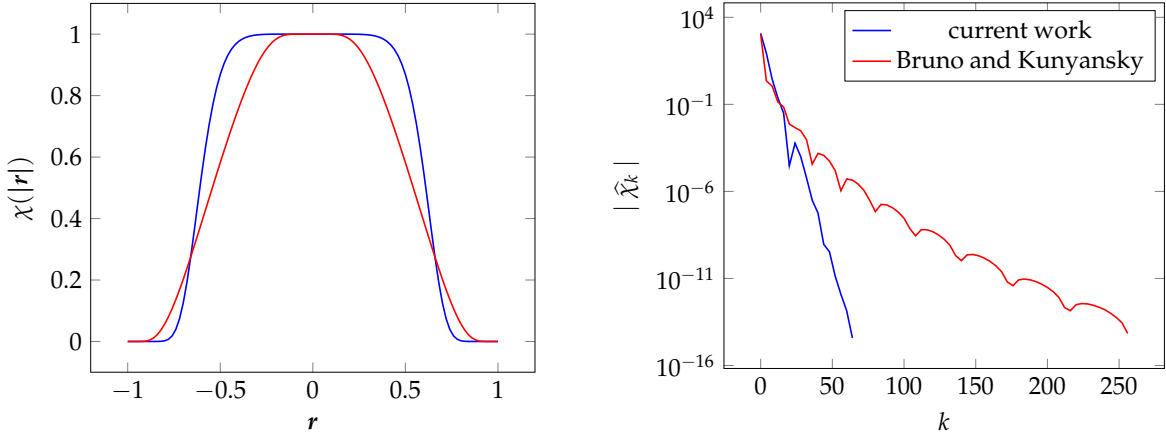 %>>>
  \subsubsection{Partition of unity function} %<<<
    %The floating partition of unity $\pou$ is used to separate the boundary integral in \pr{e:sin-bdry-int} into a smooth global integral and a local singular integral.
  In \pr{e:pou-def}, the floating partition of unity $\pou$ is defined
  in terms of another function $\chi$ that we define as:
    \begin{align}
      \chi(\rho) = \exp\left( - 36 |\rho|^{8} \right) .
      \label{e:pou-function}
    \end{align}
    Notice that $\chi$ as defined above does not strictly satisfy the
    condition $\chi(\rho)=0$ for $\rho \geq 0$; however, for computations
    in double-precision it is sufficiently small for $\rho \geq 1$.
    In \pr{f:pou-comparison}, we compare $\chi$ as defined here with
    the definition in \cite{Bruno_2001b}.
    %\note{Mike: Which function are you comparing with? The above
    %  reference is different than that in the caption of the figure.\\
    %  Dhairya: fixed.}
    We show $\chi$ in the plot
    on the left and its periodic Fourier coefficients in the plot on the right.
    Comparing the two choices, we observe that our definition of
    $\chi$ is close to $1$ in a larger neighborhood around the origin
    and therefore, it is more effective at screening the kernel
    singularity in \pr{e:smooth-int}.  We also note that the Fourier
    coefficients $\fourier{\chi}$ decay much more rapidly for our
    $\chi$ and it therefore requires fewer grid points for the same
    resolution.
  %>>>

  \subsubsection{Performance optimizations} %<<<
  Kernel function evaluations for computing the smooth integrals is
  one of the most expensive parts of our quadrature scheme.  For our
  application, we need to evaluate the potentials from the
  single-layer Helmholtz kernel function ~$\HelmKer{\BeltramiParam}$~
  and its gradient ~$\Grad{\HelmKer{\BeltramiParam}}$.  We have
  optimized kernel evaluations using shared-memory parallelism and
  through the use of \abbrev{AVX} vector instructions for x86
  processors.
  The inverse square root operation in the Laplace and the Helmholtz kernels
  is implemented using the fast approximate inverse square root instruction
  along with Newton iterations for additional accuracy as described in \cite{Malhotra_2015}.
  We use $\ordinal{13}$ order Taylor series approximation
  to evaluate ~$\sin t$~ to 12-digit accuracy in the interval
  ~$[-\pi/4,\pi/4]$ and then evaluate ~$\cos t=\sqrt{1-\sin^2 t}$~
  using a procedure similar to the inverse square root operation.
  To evaluate $\sin$ and
  $\cos$ functions outside the range ~$[-\pi/4, \pi/4]$, we use
  rotations in the complex plane by multiplying with
  ~$\Imag = \sqrt{-1}$.
  Our $\sin$ and $\cos$ evaluation algorithm performed slightly better than Intel's SVML (Short Vector Math Library) in our experiments;
  however, we do not achieve full double-precision accuracy using $\ordinal{13}$ order Taylor series approximation.
  In \pr{t:ker-opt}, we present timing results to show the performance improvement achieved using these optimizations.
  %\note{Mike: We need to show results of how
  %  much faster this is than just calling the built-in functions in
  %  Fortran/C, otherwise it seems like overkill.}
    
  \begin{table}[b!] %<<<
    \centering
    \caption{\label{t:ker-opt} Timing results in seconds on a single
      Intel Ivy Bridge CPU core (running at $2.5\ghz$) for $1\pexp9$
      kernel evaluations of the Laplace and the Helmholtz single-layer
      kernel functions compiled using GCC and Intel compilers, and
      comparing the un-vectorized and the vectorized implementations
      in each case.  With GCC, we get $4.5\times$ speedup for the
      Laplace kernel and $6.5\times$ speedup for the Helmholtz kernel.
      The Intel compiler is able to auto-vectorize our un-vectorized
      code and therefore the speedups are not as large as with GCC;
      however, we still achieve over $2\times$ speedup using
      explicit vectorization and our optimized implementation of $\sin$ and
      $\cos$ functions.  }
    \begin{tabular}{l | r r | r r}
      \hline %------------------------
                        & \multicolumn{2}{c|}{Laplace kernel} & \multicolumn{2}{c}{Helmholtz kernel}\\
      Compiler          &   Un-vectorized    &   Vectorized   &   Un-vectorized    &   Vectorized   \\
      \hline %------------------------
      GCC-8.2           &          $9.21$    &       $2.05$   &         $92.65$    &      $14.12$   \\
      Intel-17.0.2      &          $4.58$    &       $2.12$   &         $28.71$    &      $14.27$   \\
      \hline %------------------------
    \end{tabular}
  \end{table} %>>>
  %>>>

  % TODO: discuss off-surface evaluations, parameters to control accuracy, computation cost and memory usage}
%>>>

\subsection{A spectral Laplace-Beltrami solver\label{ss:lb-algo}} %<<<
  Our spectral Laplace-Beltrami solver is based on previous work in \cite{Imbert_G_rard_2017, O_Neil_2018_LaplaceBeltrami}.
  In \cite{Imbert_G_rard_2017}, the surface Laplacian was implemented using Fourier pseudo-spectral differentiation and preconditioned using the exact inverse for the flat plane.
  The discrete Fourier transforms were computed using \abbrev{FFTPACK} \cite{swarztrauber1985fftpack} and the linear system was solved using BiCGSTAB.
  In our current work, we have improved performance by using the \abbrev{FFTW} library \cite{FFTW3_2005} for computing the discrete Fourier transforms and using \abbrev{GMRES} for the linear solve.
  A layer potential preconditioner for the Laplace-Beltrami equation was presented in \cite{O_Neil_2018_LaplaceBeltrami} for high-order curvilinear triangular meshes.
  We have implemented this preconditioner using the quadratures discussed in \pr{ss:quad-algo}.
  %We will compare the performance of the two preconditioners in \pr{ss:results-lb}.
  For completeness, we now briefly summarize the Laplace-Beltrami solver.

  %<<<
    For a single toroidal surface $\Boundary$, the surface gradient, divergence and Laplace-Beltrami operators are given by,
    \begin{align} %<<<
      %\SurfGrad{f} &= \frac{1}{\detMetricTensor}
      %               \left(
      %                     \MetricTensor_{22} \parderiv{f}{\TAngle}
      %                     -
      %                     \MetricTensor_{12} \parderiv{f}{\PAngle}
      %                   \right) \SCoord_{\TAngle} +
      %               \left(
      %                     \MetricTensor_{11} \parderiv{f}{\PAngle}
      %                     -
      %                     \MetricTensor_{12} \parderiv{f}{\TAngle}
      %               \right) \SCoord_{\PAngle}
      %,\\
      %\SurfDiv{\vector{v}} &= \frac{1}{\sqrt{\detMetricTensor}} \parderiv{}{\TAngle} \left(\sqrt{\detMetricTensor} v_{1}\right)
      %                         +
      %                         \frac{1}{\sqrt{\detMetricTensor}} \parderiv{}{\PAngle} \left(\sqrt{\detMetricTensor} v_{2}\right)
      %,\\
      %\SurfCurl{u} &= - \SurfDiv{\left( \cross{\Normal}{\vector{v}} \right)}
      %, \\
      %\SurfCurl{u} &= - \SurfDiv{\left( \cross{\Normal}{\vector{v}} \right)}
      %
      %
      \SurfGrad{f} &= \begin{bmatrix} \SCoord_{\TAngle} & \SCoord_{\PAngle} \end{bmatrix} \linop{\MetricTensor}^{-1} \begin{bmatrix} \pderiv{}{\TAngle} \\ \pderiv{}{\PAngle} \end{bmatrix} f , \label{e:surf-grad} \\
      \SurfDiv{\vector{v}} &= \frac{1}{\sqrt{\detMetricTensor}} \begin{bmatrix} \pderiv{}{\TAngle} & \pderiv{}{\PAngle} \end{bmatrix} \sqrt{\detMetricTensor}~ \vector{v} , \label{e:surf-div} \\
      \SurfLap{f} &= \SurfDiv{\SurfGrad{f}} = \frac{1}{\sqrt{\detMetricTensor}} \begin{bmatrix} \pderiv{}{\TAngle} & \pderiv{}{\PAngle} \end{bmatrix} \sqrt{\detMetricTensor}~ \linop{\MetricTensor}^{-1} \begin{bmatrix} \pderiv{}{\TAngle} \\ \pderiv{}{\PAngle} \end{bmatrix} f , \label{e:surf-lap}
    \end{align} %>>>
    where $\linop{\MetricTensor}$ is the metric tensor, $f$
    is a scalar function on the surface and $\vector{v}$ is a
    tangential vector field defined with respect to the tangent
    vectors $\SCoord_{\TAngle}$ and $\SCoord_{\PAngle}$ such that
    $\vector{v}(\TAngle, \PAngle)=v_{1}(\TAngle,
    \PAngle)~\SCoord_{\TAngle} + v_{2}(\TAngle,
    \PAngle)~\SCoord_{\PAngle}$.
    The discrete surface gradient, divergence and Laplace-Beltrami
    operators are obtained by replacing $\pderiv{}{\TAngle}$ and
    $\pderiv{}{\PAngle}$ with the discrete spectral differentiation
    operators $\fftdiff{}{\TAngle}$ and $\fftdiff{}{\PAngle}$
    respectively in \pr{e:surf-grad,e:surf-div,e:surf-lap},
    \begin{align} %<<<
      \dSurfGrad{\vectord{f}} &= \begin{bmatrix} \vectord{\SCoord}_{\TAngle} & \vectord{\SCoord}_{\PAngle} \end{bmatrix} \linopd{\MetricTensor}^{-1} \begin{bmatrix} \fftdiff{}{\TAngle} \\ \fftdiff{}{\PAngle} \end{bmatrix} \vectord{f} , \label{e:dsurf-grad} \\
      \dSurfDiv{\vectord{v}} &= \frac{1}{\sqrt{\detMetricTensord}} \begin{bmatrix} \fftdiff{}{\TAngle} & \fftdiff{}{\PAngle} \end{bmatrix} \sqrt{\detMetricTensord}~ \vectord{v} , \label{e:dsurf-div} \\
      \dSurfLap{\vectord{f}} &= \dSurfDiv{\dSurfGrad{f}} = \frac{1}{\sqrt{\detMetricTensord}} \begin{bmatrix} \fftdiff{}{\TAngle} & \fftdiff{}{\PAngle} \end{bmatrix} \sqrt{\detMetricTensord}~ \linopd{\MetricTensor}^{-1} \begin{bmatrix} \fftdiff{}{\TAngle} \\ \fftdiff{}{\PAngle} \end{bmatrix} \vectord{f} , \label{e:dsurf-lap}
    \end{align} %>>>
    where $\vectord{f}$, $\vectord{v}$ and $\linopd{\MetricTensor}$
    are the discretizations of $\scalar{f}$,
    $\vector{v}$ and
    $\linop{\MetricTensor}$ respectively.
    The Laplace-Beltrami operator has a nullspace of dimension one
    containing the space of constant functions.  We use a rank-one
    update to make the operator full-rank and therefore uniquely
    invertible.  The modified problem and the corresponding discrete
    problem are then given by:
    \begin{align}
      \SurfLap{\scalar{u}} + \int\limits_{\Boundary} \scalar{u}
      \AreaElem = \scalar{f}  \qquad 
      \text{and} \qquad 
      \dSurfLap{\vectord{u}} + \dotprod{\vectord{w}}{\vectord{u}} = \vectord{f} ,
      %\dSurfLap{\vectord{u}} + \sum\limits_{i,j} \scalard{u}\nm[i][j] \scalard{w}\nm[i][j] = \vectord{f} ,
      \label{e:disc-lap-beltrami}
    \end{align}
    where $\scalar{f}$ is the given RHS, $\scalar{u}$ is the unknown,
    $\vectord{f}$ and $\vectord{u}$ are the discretizations of
    $\scalar{f}$ and $\scalar{u}$, respectively, and
    $\scalard{w}\nm[i][j] = \frac{4 \pi^2}{\Nt \Np}
    \sqrt{\det{\vectord{\MetricTensor}\nm[i][j]}}$ is the trapezoidal
    quadrature weight times the differential area element.  We will
    next discuss two preconditioners for this problem.  The final
    preconditioned problem is then solved using a \abbrev{GMRES} code
    with modified Gram-Schmit orthogonalization from the PETSc
    library~\cite{petsc-web-page,petsc-user-ref}.
  %>>>

  \subsubsection{Spectral preconditioner} %<<<
    \newcommand{\InvFlatSurfLap}[1]{\ensuremath{\Delta^{-1}_{I_h}{#1}}}
    \newcommand{\FourierInvLap}[1]{\ensuremath{\linopd{L}{#1}}}
    In \cite{Imbert_G_rard_2017}, the inverse of the Laplace-Beltrami operator for a flat surface was used to precondition \pr{e:disc-lap-beltrami}.
    The pseudo-spectral inverse surface Laplacian can be constructed as follows,
    \begin{align}
      \InvFlatSurfLap{ ~\vectord{f}} = \IFFT{ \FourierInvLap{ ~\FFT{ ~\vectord{f}} } }
    \end{align}
    where $\FourierInvLap{}$ is a diagonal operator applied to the Fourier coefficients,
    \begin{align}
      \left( \FourierInvLap{ ~\fourier{\vectord{f}} } \right)\nm &=
      \begin{cases}
        0,                                                                                                  &\text{if } m=n=0 \\
        \left( \frac{n^2}{L_{\TAngle}^2} + \frac{m^2}{L_{\PAngle}^2} \right)^{-1} \fourier{\scalard{f}}\nm, &\text{otherwise}.
      \end{cases}
      %& \text{ for } n=1,\cdots,\Nt-1 \qquad\text{and}\qquad m=1,\cdots,\Np-1
    \end{align}
    The parameters $L_{\TAngle}$ and $L_{\PAngle}$ are the average
    surface lengths in the toroidal and poloidal direction
    respectively.  They account for the anisotropy in the surface
    parameterization.  The final preconditioned system is then
    obtained by left-preconditioning \pr{e:disc-lap-beltrami} with
    $(\InvFlatSurfLap{} + \vectord{1} \Transpose{\vectord{1}})$ as
    follows,
    \begin{align}
      \left( \InvFlatSurfLap{} + \vectord{1} \Transpose{\vectord{1}} \right) ~\left( \dSurfLap{} + \vectord{1} \Transpose{\vectord{w}} \right) \vectord{u} ~=~ \left( \InvFlatSurfLap{} + \vectord{1} \Transpose{\vectord{1}} \right) ~\vectord{f} ,
    \end{align}
    where we have added $\vectord{1} \Transpose{\vectord{1}}$ to $\InvFlatSurfLap{}$ to make it full rank.
  %>>>

  \subsubsection{Layer-potential preconditioner} %<<<
  In \cite{O_Neil_2018_LaplaceBeltrami}, it was shown that preconditioning \pr{e:disc-lap-beltrami} symmetrically with the Laplace single-layer potential operator $\SL{0}$ results in a second-kind Fredholm equation.
    The preconditioned system is then given by:
    \begin{align}
      \SL{0}~ \left( \dSurfLap{} + \vectord{1} \Transpose{\vectord{w}} \right) ~\SL{0}~ \vectord{v} &= \SL{0}~ \vectord{f}, \label{e:layer-potential-precond0} \\
      \vectord{u} &= \SL{0}~ \vectord{v}, \label{e:layer-potential-precond1}
    \end{align}
    where we first solve \pr{e:layer-potential-precond0} for $\vectord{v}$ using \abbrev{GMRES} and then compute the final solution $\vectord{u}$ using \pr{e:layer-potential-precond1}.
  %>>>

  In \pr{ss:results-lb}, we present results comparing the performance of these two preconditioners.
  While the spectral preconditioner is extremely efficient to compute due to the use of \abbrev{FFT}, it can only be applied to uniform surface discretizations.
  On adaptive meshes, the inverse of the 2D Laplacian can be applied using a specialized fast multipole method \cite{Ethridge_2001} or other fast Poisson solvers; however, this is not straightforward.
  In addition, it is sensitive to the surface parameterization.
  On the other hand, the layer-potential preconditioner is not sensitive to the surface parameterization provided the surface quadratures are computed accurately.
  It can also be applied to adaptive meshes, provided appropriate quadratures are available to compute the layer-potentials.
  However, despite the performance optimizations discussed in \pr{ss:quad-algo}, the surface quadratures can still be expensive to compute.
  For our current application, the spectral preconditioner is both fast and sufficiently robust. %; however, the layer potential preconditioner could become useful in the future for more complex geometries which require adaptive discretizations.
%>>>

\subsection{Computing harmonic vector fields\label{ss:harmonic-vec-algo}} %<<<

On general smooth closed surfaces, harmonic vector fields are not
known analytically and must be solved for. Only in specialized
geometries, e.g. surfaces of revolution, do formulae
exist~\cite{Cerfon_2014,Epstein_2012}.  Recall that a harmonic vector
field $\vct{m}_H$ is one such that~$\SurfDiv{\vector{m}_H} = 0$
and~\mbox{$\SurfDiv \cross{\Normal}{\vector{m}_H} = 0$}. In our case,
when computing Taylor states using the generalized Debye
representation, we need to compute a basis for harmonic vector fields
subject to an additional constraint, namely that
$\Normal \times \vct{m}_H = -\Imag \vector{m}_H$.  Such harmonic
vector fields~$\vector{m}_H$ can be computed for each toroidal surface
and are unique up to a complex scaling factor.  We follow the scheme
described in~\cite{O_Neil_2018_LaplaceBeltrami} and start with a
smooth tangential vector field $\vector{v}$.  In our implementation,
we choose ~$\vector{v} = \SCoord_{\TAngle}$.  We first compute the non-harmonic terms of its
Hodge decomposition,
  \begin{align*}
    \vector{v} ~=~ \SurfGrad{\alpha} ~+~
    \cross{\Normal}{\SurfGrad{\beta}} ~+~ \vector{v}_H, 
  \end{align*}
  where $\alpha$ and $\beta$ are unknown scalar functions obtained by solving
  the following two Laplace-Beltrami problems:
  \begin{align*}
    \SurfLap{ ~\alpha} ~=~ \SurfDiv{\vector{v}} ,
    && \text{and} &&
    \SurfLap{ ~\beta}  ~=~ -\SurfDiv{\cross{\Normal}{\vector{v}}}.
  \end{align*}
  Then, we can compute~$\vector{v}_H = \vector{v} - \SurfGrad{\alpha} -
  \cross{\Normal}{\SurfGrad{\beta}}$~ and finally obtain the required
  harmonic vector field as
  ~$\vector{m}_H = \vector{v}_H + \Imag
  \cross{\Normal}{\vector{v}_H}$.
%>>>

\subsection{Algorithm summary}
\label{ss:taylor-algo} %<<<  

In this section, we discussed the discretization of the boundary data
on a uniform spectral mesh.  We have presented a quadrature scheme
which is used to compute boundary convolutions with the
kernels~$\HelmKer{\BeltramiParam}(\vector{r})$
and~$\Grad{\HelmKer{\BeltramiParam}}(\vector{r})$.  These schemes are
used to construct the discretized convolution operators
$\SL{\BeltramiParam}$, $\Grad{\SL{\BeltramiParam}}$.  The operator
$\Curl{\SL{\BeltramiParam}}$ can similarly be constructed from the
result of $\Grad{\SL{\BeltramiParam}}$.  We discussed a spectral
Laplace-Beltrami solver along with two efficient preconditioners, and
showed how to compute harmonic vector fields using the
Laplace-Beltrami solver.  The harmonic vector fields are then used to
discretize our boundary integral operator $\conv{K}$ in
\pr{e:taylor-second-kind}.  The resulting system is then solved as
described in \pr{ss:overview-algo}.  This requires $\Nsurf$ solves of
the boundary integral operator using \abbrev{GMRES}.  We summarize the
total cost of different stages of our algorithm for computing Taylor
states in \pr{t:complexity}.  Since our quadrature scheme and the
Laplace-Beltrami solver are both spectrally accurate with mesh
refinement, we expect to see spectral convergence for the overall
solver as well.  In the next section, we present empirical numerical
results to show convergence and solve times for different stages of
our solver.

  \begin{table}[t!] %<<< Complexity
    \centering
    %\resizebox{\columnwidth}{!}{%
    \begin{tabular}{lllllr}
      \noalign{\vspace{3pt} \hrule height 1.1pt \vspace{3pt}} %---------------------------------------------------
                                          & $\Tsetup ~~~= $ & $                          $ & $      $ & $\Nunknown   \quadorder^2$ &  \vspace{3pt} \\
      Layer potential quadrature          & $\Tsmooth  ~= $ & $\Nsurf \gmresiter         $ & $\times$ & $\Nunknown^2             $ &  \vspace{3pt} \\
                                          & $\Tsingular = $ & $\Nsurf \gmresiter         $ & $\times$ & $\Nunknown   \patchdim^2 $ &  \vspace{3pt} \\
                                          & $\Tquadeval = $ & \multicolumn{3}{l}{$\Tsmooth ~+~ \Tsingular$}                        &  \\
      \noalign{\vspace{3pt} \hrule height 0.5pt \vspace{3pt}} %----------------------------------------------------
      Spectral Laplace-Beltrami           & $\Tlb ~~~~~~= $ & $\Nsurf \gmresiter \lbiter $ & $\times$ & $\Nunknown \log \Nunknown$ &  \\
      \noalign{\vspace{3pt} \hrule height 1.1pt \vspace{3pt}} %---------------------------------------------------
    \end{tabular}
    %}
    \caption[Time complexity of different stages in the
    algorithm.]{\label{t:complexity} Cost complexity for different
      stages of the algorithm for $\Nsurf$ surfaces with $\Nunknown$
      total surface discretization points, $\gmresiter$ \abbrev{GMRES}
      iterations for each Boundary Integral Equation (BIE) solve, and
      $\lbiter$ iterations for each Laplace-Beltrami solve.  $\Tlb$ is
      the total cost for $\Nsurf \gmresiter$ Laplace-Beltrami solves
      using the spectral preconditioner.  The setup cost for the
      quadratures is $\Tsetup$, and the total cost of quadrature
      evaluation $\Tquadeval$ is the sum of cost of the singular
      correction $\Tsingular$ and the cost of the smooth quadrature
      $\Tsmooth$.  }
  \end{table} %>>>

%>>>

\section{Numerical experiments}\label{s:tests}
% vim: set foldmethod=marker foldmarker=<<<,>>>

We now present numerical results to demonstrate convergence and
efficiency of the quadrature scheme for singular integrands detailed
in \pr{ss:results-quad}, the spectral Laplace-Beltrami solver in
\pr{ss:results-lb}, and the full solver for computing Taylor states
in \pr{ss:results-taylor}.  All numerical errors reported in this
section are relative errors.
Our experiments were performed on a Linux workstation with quad
15-core Intel Xeon E7-4880 v2 CPUs running at $2.5\ghz$ with $1.5\tb$
of RAM.  We use Intel compiler version 17.0.2 with O3 optimization
level and AVX vectorization enabled.

  \begin{figure}[t!] %<<<
    \centering
    \includegraphics[width=0.5\textwidth]{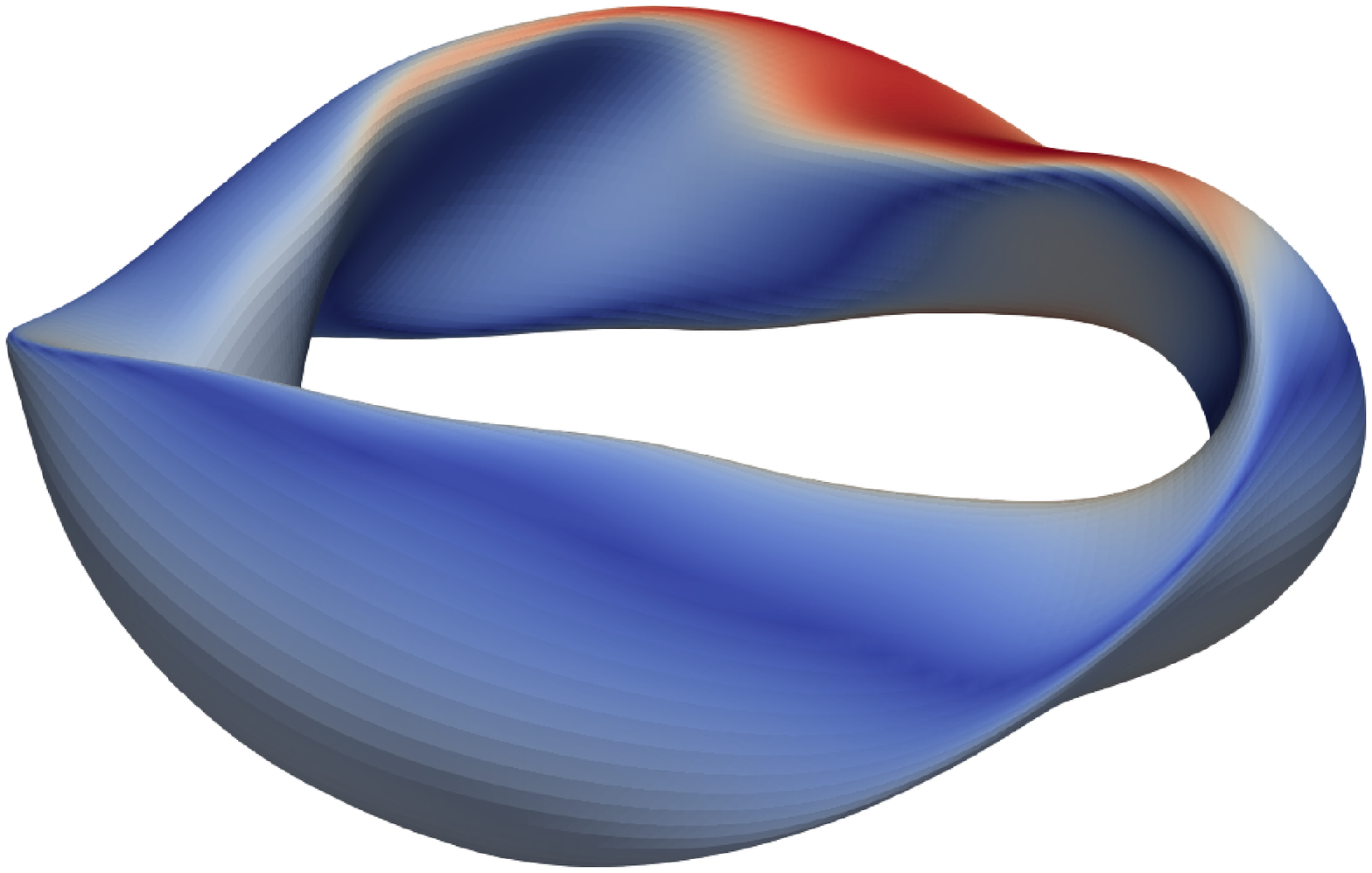}
    \caption{\label{f:conv-sing-quad}
      The surface and the magnitude of the reference potential $u$ for the convergence results presented in \pr{t:quad-conv}. The surface is the outer surface of the QAS3 stellarator~\cite{GarabedianQAS3}.
      The reference potential is generated by a single point source outside the bounded domain $\Domain$.
    }
  \end{figure} %>>>

\subsection{Singular quadrature\label{ss:results-quad}} %<<<
In this section, we present convergence results for our singular
surface quadrature scheme.  We use Green's third identity to
verify and evaluate the accuracy of our method.  For a solution $u$ to
the homogeneous Helmholtz equation in a bounded domain
$\Domain \subset \Real^3$, Green's identity states that,
  \begin{align*}
    u(\vector{x}) &=
                       \int\limits_{\Boundary} \HelmKer{\BeltramiParam}(\vector{x} - \vector{y}) \partial_{\Normal_y} u(\vector{y}) \AreaElem_y
                     - \int\limits_{\Boundary} u(\vector{y}) \partial_{\Normal_y} \HelmKer{\BeltramiParam}(\vector{x} - \vector{y}) \AreaElem_y,
                     & \text{ for } \vector{x} \in \Domain .
  \end{align*}
Taking the limit as $\vct{x} \to \Gamma$, with $\vct{x} \in \Omega$,
along the boundary $\Boundary$ we have
\begin{align}
  u(\vector{x}) &= \frac{u}{2}(\vector{x})
                  + \SL{\BeltramiParam}[\partial_{\Normal} u](\vector{x})
                  - \DL{\BeltramiParam}[u](\vector{x}),
  & \text{ for } \vector{x} \in \Boundary 
    \label{e:greens-rep}
\end{align}
where $\SL{\BeltramiParam}[\cdot]$ and $\DL{\BeltramiParam}[\cdot]$
are the single- and double-layer potential operators on $\Boundary$
which compute convolutions with $\HelmKer{\BeltramiParam}$ and
$\partial_{\Normal} \HelmKer{\BeltramiParam}$ respectively.  We
compute the RHS in \pr{e:greens-rep} using our quadrature scheme and
compare it to the reference potential~$u$ on~$\Boundary$.  The
reference potential $u$ is generated using a single source
$\vector{x_0}$ outside of the domain $\Domain$; \ie
~$u(\vector{x}) = \HelmKer{\BeltramiParam}(\vector{x} -
\vector{x_0})$.

The surface geometry and the magnitude of the reference potential $u$
in our experiments are visualized in \pr{f:conv-sing-quad}.  The
geometry corresponds to the QAS3 stellarator proposed by Paul
Garabedian, whose outer surface can be constructed from the
coefficients given in Table II of reference~\cite{GarabedianQAS3}. We
present convergence results for $\BeltramiParam = 0$ and
$\BeltramiParam = 1$ in \pr{t:quad-conv}.  For both sets of results,
we show convergence to about 8-digits in $L^\infty$-norm as we refine
the mesh from $\Nunknown = 70 \times 14$ unknowns to
$\Nunknown = 700 \times 140$ unknowns, increase the
dimension~$\patchdim$ of the square partition-of-unity patch, and
increase the order $\quadorder$ of the polar quadrature.  In each
case, we have selected the optimal values of the parameters
$\Nunknown$, $\patchdim$ and $\quadorder$ to minimize the computation
time for a given accuracy.  As expected from the error analysis, we
observe spectral convergence in each case.
  %\note{give more detailed error analysis}

  In \pr{t:quad-conv}, we also report a breakdown of the total time into the setup time $\Tsetup$ and the evaluation time $\Tquadeval = \Tsingular + \Tsmooth$ on 1-CPU core and 60-CPU cores.
  The setup stage is required only once for a fixed geometry shape;
  therefore, when solving boundary integral equations using iterative linear solvers, the setup cost is amortized.
  As we increase the problem size, the setup time $\Tsetup$ scales as $\bigO{\Nunknown \quadorder^2}$.
  For the evaluation phase $\Tquadeval$, the $\bigO{\Nunknown^2}$ complexity of the N-body computation is the dominant cost
  and the $\bigO{\Nunknown \patchdim^2}$ cost of singular correction accounts for only a small fraction of the evaluation time.
  Comparing the timings for 1-CPU core and 60-CPU cores, we observe that both the setup and evaluation stages scale well for sufficiently large problems,
  with parallel efficiency up to $90\%$.
  Notice that the evaluation time for $\BeltramiParam=0$ is much smaller than for $\BeltramiParam=1$; this is because
  the former uses the Laplace single-layer and double-layer kernel functions, which do not require expensive $\sin$ and $\cos$ function evaluations.

  % >>>

  \begin{table}[b!] %<<<
    \centering
    \begin{tabular}{r c r r r c r | r r r | r r r} %<<<
      \hline %------------------------
                        &~~&  $ $         &             &              &~~&                     &               \multicolumn{3}{c|}{1-core} &                \multicolumn{3}{c}{60-cores} \\
      $\BeltramiParam$  &~~&  $\Nunknown$ & $\patchdim$ & $\quadorder$ &~~&  $\norm[\infty]{e}$ &    $\Tsetup$ &  $\Tsingular$ & $\Tsmooth$ &    $\Tsetup$ &  $\Tsingular$ &   $\Tsmooth$ \\
      \hline %------------------------
                        &~~&  $9.8\pexp2$ &        $12$ &         $15$ &~~&         $2.0\nexp3$ &      $  0.7$ &       $0.001$ &    $0.006$ &       $0.03$ &       $0.001$ &      $0.003$ \\
                        &~~&  $3.9\pexp3$ &        $24$ &         $24$ &~~&         $3.9\nexp5$ &      $  6.9$ &        $0.02$ &     $0.09$ &       $0.16$ &       $0.002$ &       $0.01$ \\
     %                  &~~&  $8.8\pexp3$ &        $30$ &         $27$ &~~&         $5.1\nexp6$ &      $ 19.9$ &        $0.06$ &     $ 0.4$ &       $ 0.4$ &       $0.008$ &       $0.03$ \\
      $0.0$             &~~&  $1.6\pexp4$ &        $30$ &         $30$ &~~&         $1.2\nexp6$ &      $ 42.9$ &        $0.09$ &     $ 1.3$ &       $ 0.9$ &       $0.008$ &       $0.09$ \\
     %                  &~~&  $2.5\pexp4$ &        $40$ &         $33$ &~~&         $3.2\nexp7$ &      $ 83.9$ &         $0.2$ &     $ 3.2$ &       $ 1.6$ &        $0.01$ &       $0.13$ \\
     %                  &~~&  $3.5\pexp4$ &        $40$ &         $36$ &~~&         $1.2\nexp7$ &      $143.0$ &         $0.3$ &     $ 6.7$ &       $ 2.7$ &        $0.02$ &       $0.22$ \\
                        &~~&  $4.8\pexp4$ &        $40$ &         $36$ &~~&         $4.3\nexp8$ &      $190.8$ &         $0.5$ &     $12.2$ &       $ 3.6$ &        $0.02$ &       $0.34$ \\
     %                  &~~&  $6.3\pexp4$ &        $50$ &         $39$ &~~&         $1.6\nexp8$ &      $309.8$ &         $0.9$ &     $21.0$ &       $ 5.8$ &        $0.03$ &       $0.56$ \\
     %                  &~~&  $7.9\pexp4$ &        $50$ &         $39$ &~~&         $6.8\nexp9$ &      $382.7$ &         $1.1$ &     $33.1$ &       $ 7.2$ &        $0.04$ &       $0.80$ \\
                        &~~&  $9.8\pexp4$ &        $50$ &         $42$ &~~&         $3.3\nexp9$ &      $543.5$ &         $1.4$ &     $50.5$ &       $10.3$ &        $0.07$ &       $1.14$ \\
     \hline %------------------------
                        &~~&  $9.8\pexp2$ &        $12$ &         $15$ &~~&         $2.6\nexp3$ &      $  1.1$ &       $0.004$ &     $0.03$ &       $0.04$ &       $0.002$ &      $0.003$ \\
                        &~~&  $3.9\pexp3$ &        $24$ &         $24$ &~~&         $1.5\nexp4$ &      $ 11.9$ &        $0.04$ &     $ 0.6$ &       $0.26$ &       $0.005$ &       $0.02$ \\
     %                  &~~&  $8.8\pexp3$ &        $30$ &         $27$ &~~&         $1.4\nexp5$ &      $ 34.8$ &         $0.1$ &     $ 2.8$ &       $ 0.7$ &        $0.01$ &       $0.08$ \\
      $1.0$             &~~&  $1.6\pexp4$ &        $30$ &         $30$ &~~&         $4.0\nexp6$ &      $ 74.9$ &         $0.3$ &     $ 8.7$ &       $ 1.4$ &        $0.02$ &       $0.19$ \\
     %                  &~~&  $2.5\pexp4$ &        $40$ &         $33$ &~~&         $7.7\nexp7$ &      $147.9$ &         $0.6$ &     $21.3$ &       $ 2.8$ &        $0.03$ &       $0.42$ \\
     %                  &~~&  $3.5\pexp4$ &        $40$ &         $36$ &~~&         $4.0\nexp7$ &      $248.1$ &         $0.9$ &     $43.7$ &       $ 4.6$ &        $0.04$ &       $0.85$ \\
                        &~~&  $4.8\pexp4$ &        $40$ &         $36$ &~~&         $1.1\nexp7$ &      $337.8$ &         $1.3$ &     $80.7$ &       $ 6.1$ &        $0.05$ &       $1.53$ \\
     %                  &~~&  $6.3\pexp4$ &        $50$ &         $39$ &~~&         $4.9\nexp8$ &      $537.6$ &         $2.5$ &    $137.6$ &       $ 9.8$ &        $0.10$ &       $2.57$ \\
     %                  &~~&  $7.9\pexp4$ &        $50$ &         $39$ &~~&         $2.1\nexp8$ &      $678.2$ &         $3.2$ &    $220.2$ &       $12.3$ &        $0.12$ &       $4.07$ \\
                        &~~&  $9.8\pexp4$ &        $50$ &         $42$ &~~&         $7.2\nexp9$ &      $946.7$ &         $4.0$ &    $334.9$ &       $17.5$ &        $0.17$ &       $6.14$ \\
      \hline %------------------------
    \end{tabular} %>>>
    \caption{\label{t:quad-conv}
      Convergence results for the singular quadrature scheme for the Laplace kernel $\BeltramiParam=0$
      and the Helmholtz kernel $\BeltramiParam=1$.
      \pr{f:conv-sing-quad} shows the surface and the reference solution used in these experiments.
      We show convergence in the relative $L^\infty$-norm of the error as we refine the mesh $\Nunknown$,
      increase the dimension of the partition-of-unity patch $\patchdim$
      and increase the order of the polar-quadrature $\quadorder$.
      We also report the setup time $\Tsetup$ and a breakdown of the evaluation time $\Tquadeval = \Tsingular + \Tsmooth$.
      To show scalability of our code, we present timing results on 1-CPU core and on 60-CPU cores.
    }
  \end{table} %>>>

\subsection{Laplace-Beltrami solver\label{ss:results-lb}} %<<<

We now show convergence results for our spectral Laplace-Beltrami
solver.  In the following experiments, we solve
$\SurfLap{\scalar{\phi}} = \scalar{f}$ on $\Boundary$.  We define the
function $\scalar{f}$ in terms of a 3D harmonic potential $\scalar{u}$
as follows,
\begin{align*}
    \scalar{f} = \restr{\SurfLap{\scalar{u}}}{\Boundary}
    = -2\MeanCurv \frac{\partial \scalar{u}}{\partial \Normal} - \frac{\partial^2 \scalar{u}}{\partial \Normal^2}
\end{align*}
where $\MeanCurv$ is the mean curvature function over $\Boundary$.
Then, the exact solution $\scalar{\phi}$ is given by projecting
$\scalar{u}$ to the space of mean-zero functions on $\Boundary$,
\begin{align*}
    \scalar{\phi} = \restr{\scalar{u}}{\Boundary} - \frac{1}{| \Boundary |} \int\limits_{\Boundary} \scalar{u} \AreaElem 
\end{align*}
where $| \Boundary |$ is the surface area of $\Boundary$. In our
experiments, we choose $\scalar{u}$ to be the Coulombic potential from
a single off-surface point charge.  The boundary geometry $\Boundary$
along with reference solution $\scalar{\phi}$ and the RHS $\scalar{f}$
are visualized in \pr{f:laplace-beltrami}.  For this test, we also
took $\Boundary$ to be the outer surface of the QAS3
stellarator~\cite{GarabedianQAS3}.

  \begin{table}[b] %<<<
    \centering
    \begin{tabular}{r | r r r r r | r r r r r} %<<<
      \hline %------------------------
       $ $         &                                                    \multicolumn{5}{c|}{spectral preconditioner} &                                    \multicolumn{5}{c}{layer-potential preconditioner} \\
       $\Nunknown$ &  $\gmrestol$ &    $\lbiter$ & $\norm[\infty]{e}$ &               $\Time_{p=1}$ & $\Time_{p=60}$ &  $\gmrestol$ &    $\lbiter$ & $\norm[\infty]{e}$ &     $\Time_{p=1}$ & $\Time_{p=60}$ \\
      \hline %---------------------                                                                                   ---------------
      %$9.8\pexp2$ &  $2.4\nexp1$ &         $ 1$ &        $3.7\nexp1$ &                     $0.001$ &        $0.005$ & $1.0\nexp0$ &         $ 0$ &        $1.0\nexp0$ &           $0.006$ &        $0.002$ \\
       $3.9\pexp3$ &  $3.5\nexp3$ &         $ 7$ &        $1.9\nexp2$ &                     $0.010$ &        $0.022$ & $8.8\nexp2$ &         $ 3$ &        $1.4\nexp1$ &            $0.30$ &        $0.018$ \\
       $8.8\pexp3$ &  $5.2\nexp4$ &         $11$ &        $2.8\nexp3$ &                     $0.029$ &        $0.050$ & $1.3\nexp2$ &         $ 6$ &        $7.2\nexp3$ &            $2.59$ &        $0.086$ \\
       $1.6\pexp4$ &  $6.0\nexp5$ &         $16$ &        $4.6\nexp4$ &                      $0.07$ &         $0.10$ & $1.5\nexp3$ &         $ 7$ &        $1.3\nexp3$ &            $8.89$ &         $0.28$ \\
      %$2.5\pexp4$ &  $7.6\nexp6$ &         $21$ &        $8.3\nexp5$ &                      $0.16$ &         $0.19$ & $1.9\nexp4$ &         $ 9$ &        $7.2\nexp5$ &             $ 26$ &         $0.69$ \\
       $3.5\pexp4$ &  $2.1\nexp6$ &         $24$ &        $2.0\nexp5$ &                      $0.26$ &         $0.37$ & $5.1\nexp5$ &         $10$ &        $1.6\nexp5$ &             $ 60$ &         $1.39$ \\
      %$4.8\pexp4$ &  $4.3\nexp7$ &         $28$ &        $4.8\nexp6$ &                      $0.42$ &         $0.52$ & $1.1\nexp5$ &         $11$ &        $4.0\nexp6$ &             $118$ &         $2.64$ \\
       $6.3\pexp4$ &  $8.5\nexp8$ &         $32$ &        $1.1\nexp6$ &                      $0.63$ &         $0.78$ & $2.1\nexp6$ &         $12$ &        $8.0\nexp7$ &             $222$ &         $4.76$ \\
      %$7.9\pexp4$ &  $3.0\nexp8$ &         $35$ &        $2.7\nexp7$ &                      $0.98$ &         $1.18$ & $7.5\nexp7$ &         $12$ &        $3.7\nexp7$ &             $349$ &         $7.34$ \\
       $9.8\pexp4$ &  $9.8\nexp9$ &         $37$ &        $9.1\nexp8$ &                      $1.35$ &         $1.42$ & $2.4\nexp7$ &         $13$ &        $6.3\nexp8$ &             $576$ &         $11.9$ \\
      \hline %------------------------
    \end{tabular} %>>>
    \caption{\label{t:conv-laplace-beltrami}
      Convergence results for the Laplace-Beltrami solver for the problem shown in \pr{f:laplace-beltrami}, using the spectral preconditioner and the layer-potential preconditioner.
      We show convergence in the relative $L^\infty$ norm of the error $\norm[\infty]{e}$ as we increase the number of discretization grid points $\Nunknown$, and reduce the \abbrev{GMRES} tolerance $\gmrestol$.
      We also report the number of \abbrev{GMRES} iterations $\lbiter$ and the solve time $\Time_{p=1}$ on 1-CPU core and $\Time_{p=60}$ on 60-CPU cores (not including the setup time for quadratures).
    }
  \end{table} %>>>

  In \pr{t:conv-laplace-beltrami}, we present convergence results using
the two preconditioners discussed in \pr{ss:lb-algo}.  In the spectral
preconditioner case, we use left preconditioning with the exact
$\InvSurfLap$ operator for the flat plane.  In the layer-potential
preconditioner case, we use symmetric preconditioning with the
single-layer potential potential operator associated with the
Laplace's equation.  Without preconditioning, the solver did not
converge to the desired accuracy within 300 \abbrev{gmres} iterations
and therefore we have omitted those results.  The results show
convergence in the relative $L^\infty$-norm of the error as we refine
the mesh from $\Nunknown = 70 \times 14$ unknowns to
$\Nunknown = 700 \times 140$ unknowns and reduce the \abbrev{GMRES}
tolerance $\gmrestol$.  The relative error $\norm[\infty]{e}$
(compared to the reference solution) converges spectrally to about
8-digits for both preconditioners.  The number of \abbrev{GMRES}
iterations $\lbiter$ grows only modestly with the desired precision.
We also report the solve time $\Time_{p=1}$ on 1-CPU core and
$\Time_{p=60}$ on 60-CPU cores.  From the solve times $\Time_{p=1}$,
we observe that the solves using the spectral preconditioner scale as
expected with a cost of $\bigO{\lbiter \Nunknown \log{\Nunknown}}$.
Similarly, the solves with the layer-potential preconditioner scale as
$\bigO{\lbiter \Nunknown^2}$.  This cost can be reduced to
$\bigO{\lbiter \Nunknown}$ by accelerating using the fast multipole
method (FMM); however, depending on the specific FMM implementation
and desired precision, this would only be advantageous for large
problems ($\Nunknown > 20K$).  Comparing $\Time_{p=1}$ and
$\Time_{p=60}$, we observe that for the spectral preconditioner, the
solve time does not improve from using multiple CPU-cores since the
\abbrev{FFT} is a memory bound operation.  For the layer-potential
preconditioner, we observe up to $48\times$ speedup for the largest
problem on 60-CPU cores since the quadrature evaluation is compute
bound.  Comparing the two preconditioners, the layer-potential
potential is more effective in reducing the number of \abbrev{GMRES}
iterations $\lbiter$; however, it is still much more expensive due to
the high cost of the quadratures.  Since the layer-potential
preconditioner has better parallel scalability, in parallel the
timings for the two schemes become comparable.

  \begin{figure}[t] %<<<
    \centering
    \includegraphics[width=0.49\textwidth]{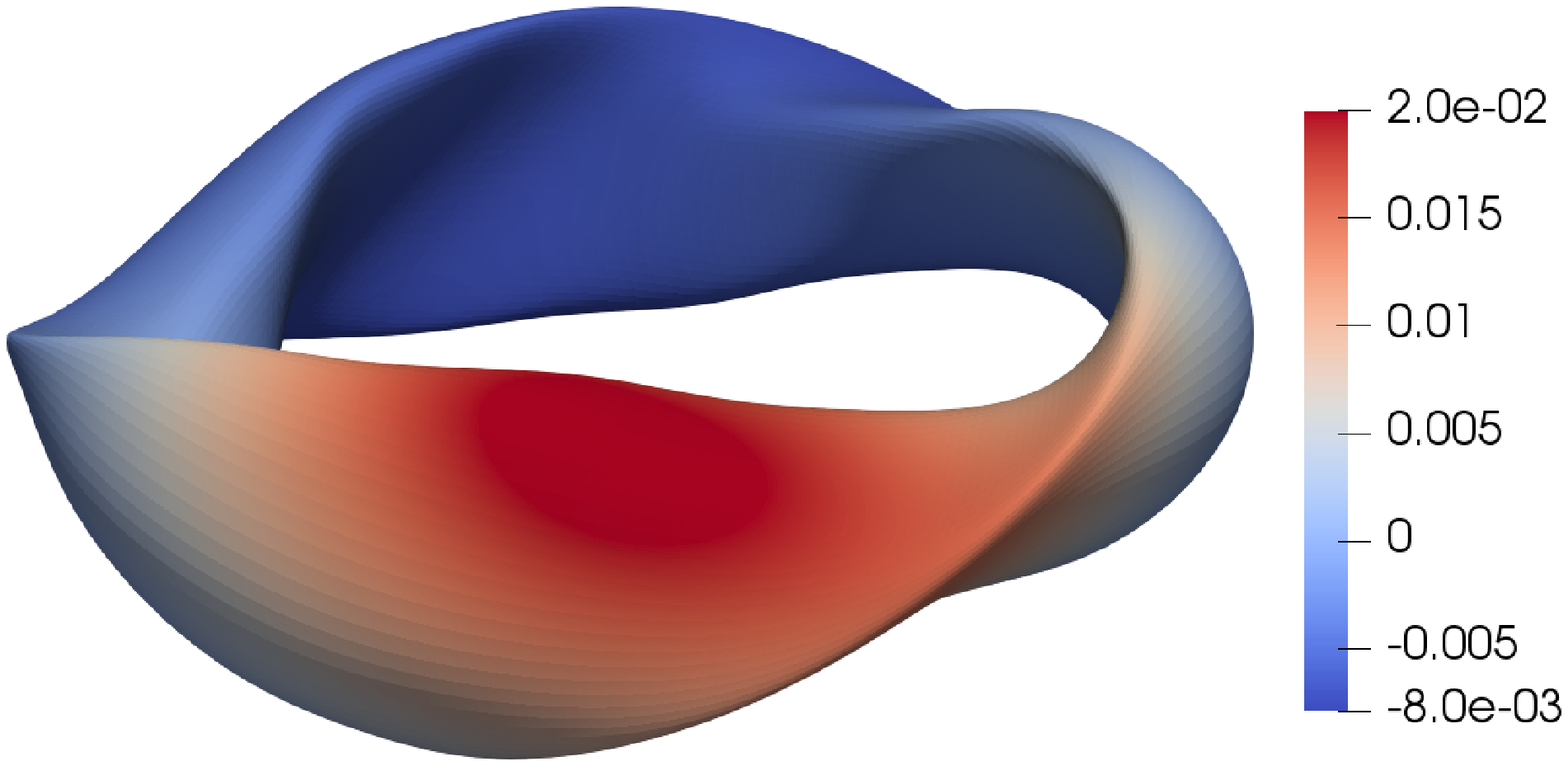}
    \includegraphics[width=0.49\textwidth]{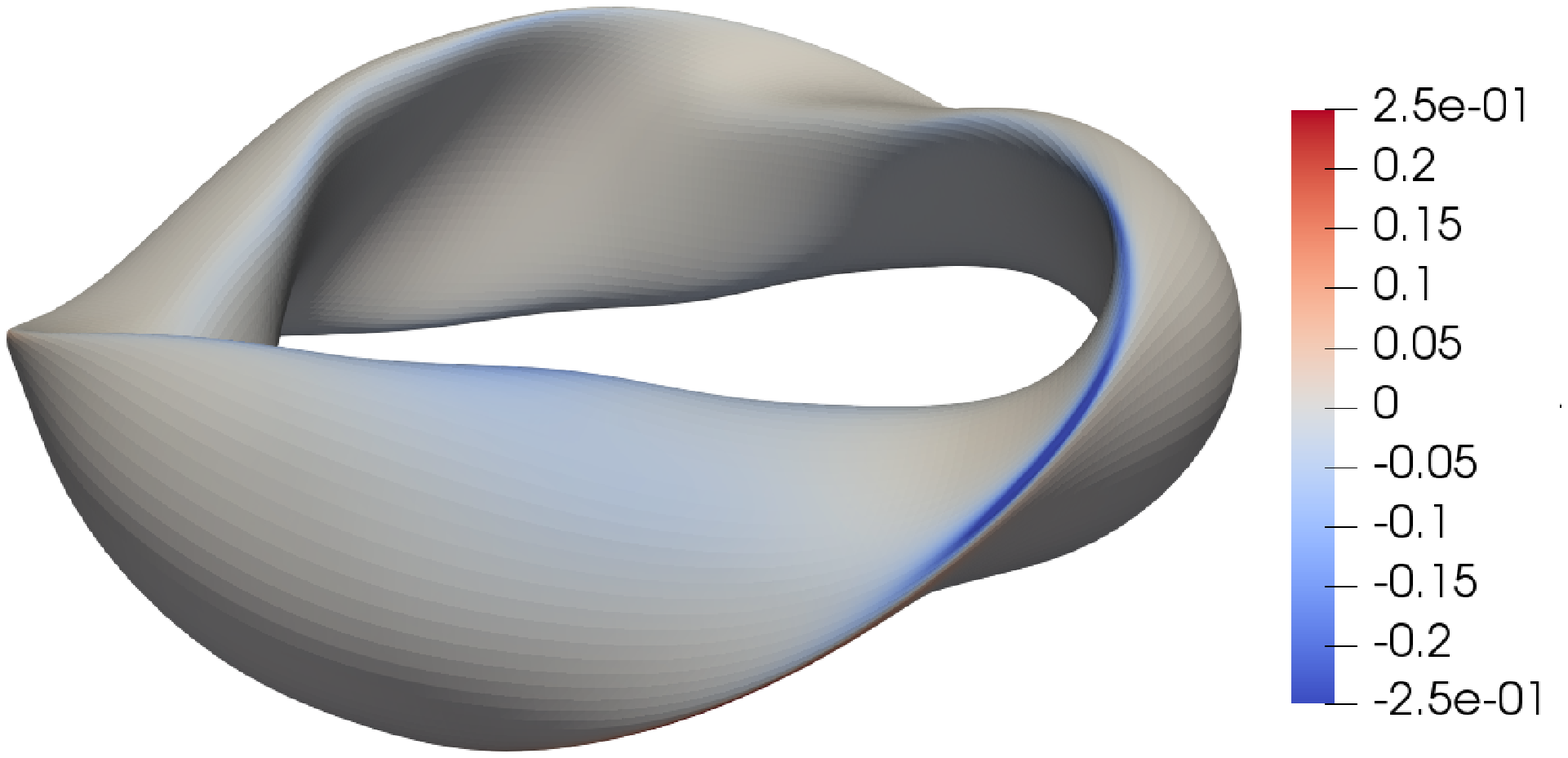}
    \caption{\label{f:laplace-beltrami}
      The surface geometry with the reference solution $\scalar{\phi}$ (left) and the input $\scalar{f}$ (right) to the Laplace-Beltrami solver for the convergence results in \pr{t:conv-laplace-beltrami}. The surface is the outer surface of the QAS3 stellarator~\cite{GarabedianQAS3}.
    }
  \end{figure} %>>>

%>>>

\subsection{Computing Taylor states\label{ss:results-taylor}} %<<<

We now present numerical results to show convergence of our boundary
integral solver for Taylor states.  In \cite{Cerfon_2014}, a method
for constructing analytic solutions for Taylor states in axisymmetric
geometries was discussed and we used this to construct reference
solutions for testing our numerical scheme in
\cite{O_Neil_2018_Taylor}.  We are not aware of a general method for
generating analytic solutions for non-axisymmetric geometries.
Therefore, to evaluate the accuracy of our solver, we instead solve a
related boundary value problem discussed below.

We construct a reference Taylor state $\vector{B_0}$ given by,
  \begin{align}
    \begin{aligned}
      \vector{B_0}             &= \BeltramiParam \vector{Q_0} + \Curl{\vector{Q_0}}, \\
      \vector{Q_0}(\vector{x}) &= \oint\limits_{\CurrentLoop} \HelmKer{\BeltramiParam}(\vector{x},\vector{y})~ m_0 \VecLengthElem(\vector{y}),
    \end{aligned}
    \label{e:taylor-reference}
  \end{align}
  where $m_0$ is a constant generalized Debye current in a loop $\CurrentLoop$
  around the domain $\Domain$.  The generalized Debye current loop is shown in red in
  \pr{f:taylor-result}.  By construction, $\vector{B_0}$ satisfies
  $\Curl{\vector{B_0}} = \BeltramiParam \vector{B_0}$; however, in
  general, it does not satisfy the boundary condition
  $\dotprod{\vector{B_0}}{\Normal} = 0$ on $\Boundary$.  Therefore,
  instead of solving the original problem in \pr{e:taylor-state}, we
  solve the following more general problem where
  $\dotprod{\vector{B}}{\Normal}$ on the boundary is non-zero,
  \begin{equation}
    \begin{aligned}
      \Curl{\vector{B}} &= \BeltramiParam \vector{B},
      &\qquad &\text{in \Domain}, \\
      \dotprod{\vector{B}}{\Normal} &= \dotprod{\vector{B_0}}{\Normal},
      & &\text{on \Boundary},
      \label{e:taylor-state-modified}
    \end{aligned}
  \end{equation}
  while also matching the appropriate flux conditions (or equivalently
  the circulation as discussed in \pr{ss:formulation}) for
  $\vector{B}$ and $\vector{B_0}$.  Due to uniqueness, the numerically
  computed solution $\vector{B}$ should match the reference solution
  $\vector{B_0}$ at all points in the interior of the domain
  $\Domain$.  Below, we test our solver for the two domains shown in
  \pr{f:taylor-result}.

  \begin{figure}[t!] %<<<
  \centering
  \includegraphics[width=6.8cm]{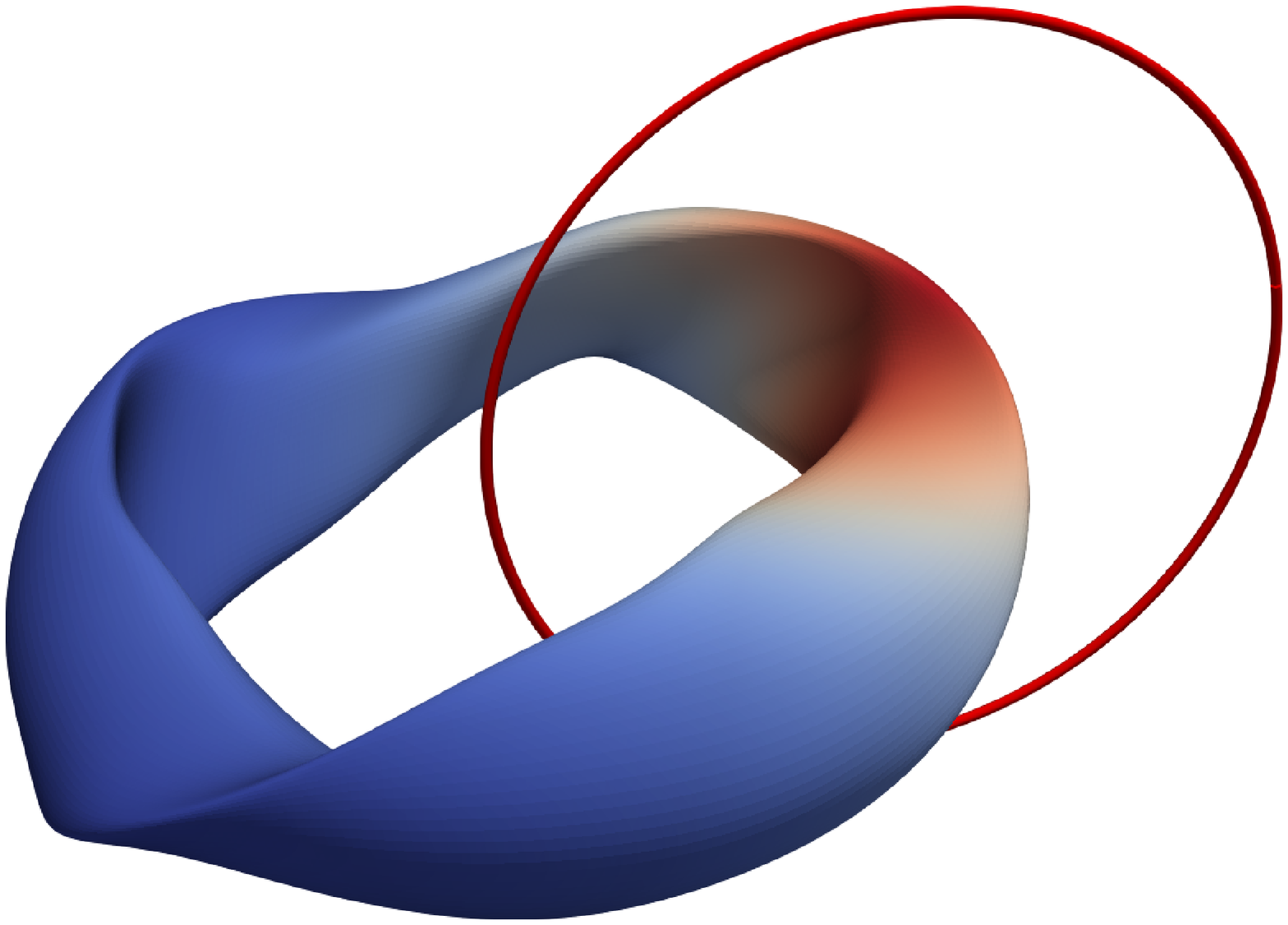}
  \includegraphics[width=6.8cm]{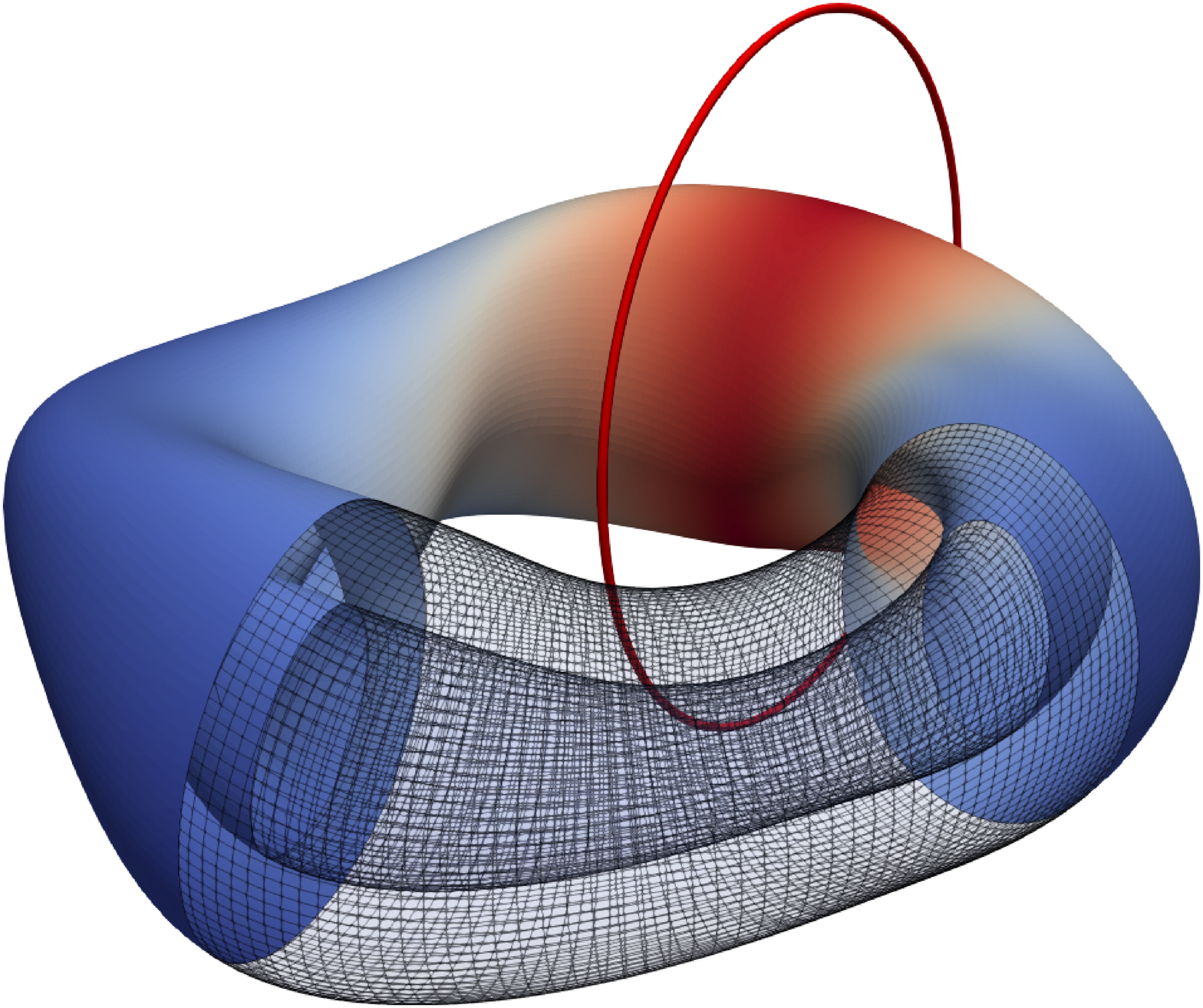}
  \caption{\label{f:taylor-result} Taylor states in toroidal (left)
    and toroidal-shell (right) domains.
    The first shape is the outer surface of the QAS3 stellarator~\cite{GarabedianQAS3}.
    In the second geometry, each surface is the surface of revolution a rotating ellipse as described in \pr{sss:shell-conv}.
    The reference solution
    $\vector{B_0}$ visualized here is generated by a generalized Debye current loop
    (shown in red) around the domain.  Convergence results for these
    geometries are presented in \pr{t:conv-taylor0,t:conv-taylor1}.}
\end{figure} %>>>

  \subsubsection{Toroidal domain} %<<<
  The domain along with the magnitude of the reference solution
  $\vector{B_0}$ are visualized on the left in \pr{f:taylor-result}. Once more, the surface is the outer surface of the QAS3 stellarator~\cite{GarabedianQAS3}.
  We discretize the current loop (shown in red) using $1\pexp3$
  equispaced points and compute the line integral in
  \pr{e:taylor-reference} using trapezoidal quadrature rule.  Since
  the domain $\Domain$ and the current loop $\CurrentLoop$ are
  sufficiently separated, the reference solution $\vector{B_0}$
  computed in this way is accurate to 16-digits at points on
  $\Domain$.  We evaluate $\dotprod{\vector{B_0}}{\Normal}$ on
  $\Boundary$ and the toroidal flux $\Flux^{tor}_{\vector{B_0}}$ in a
  cross section of $\Domain$.  Then, we setup the following
  discretized linear system,
    \begin{align*}
      \begin{bmatrix}
        \linop{A}      & \vector{u_1} \\
        \vector{v_1}^T &       c_{11} \\
      \end{bmatrix}
      \begin{bmatrix}
        \vector{\sigma} \\
        \alpha \\
      \end{bmatrix}
      =
      \begin{bmatrix}
        \dotprod{\vector{B_0}}{\Normal} \\
        \Flux^{tor}_{\vector{B_0}} \\
      \end{bmatrix}
      .
    \end{align*}
    We solve this block linear system with a procedure similar to the one described before in \prange{e:block-solve0}{e:block-solve2}, except that the $\dotprod{\vector{B_0}}{\Normal}$ is non-zero.
    The unknowns $\vector{\sigma}$ and $\alpha$ are given by
    \begin{align*}
      \alpha &= (\vector{v_1}^T \vector{d} + c)^{-1} (\Flux^{tor}_{\vector{B_0}} - \vector{v_1}^T \vector{w}) \\
      \vector{\sigma} &= \vector{w} - \vector{d} \alpha
    \end{align*}
    where $\vector{w} = \linop{A}^{-1} \dotprod{\vector{B_0}}{\Normal}$ and $\vector{d} = \linop{A}^{-1} \vector{u_1}$.
    The operator $\linop{A}^{-1}$ must be applied twice and this is accomplished using two \abbrev{GMRES} solves.
    Next, using $\vector{\sigma}$ and $\alpha$, we evaluate $\vector{B}$ at points in the interior of $\Domain$ and compare with the reference solution $\vector{B_0}$.

    In \pr{t:conv-taylor0}, we report the maximum relative error
    $\norm[\infty]{e} = {\norm[\infty]{\vector{B}-\vector{B_0}}} /
    {\norm[\infty]{\vector{B_0}}}$ in $\Domain$ for two sets of
    experiments with $\BeltramiParam = 0.5$ and $\BeltramiParam = 1$
    respectively.  In each case, we show convergence as we reduce the
    \abbrev{GMRES} tolerance $\gmrestol$, increase the mesh refinement
    $\Nunknown$, and correspondingly increase the quadrature accuracy.
    In both sets of results, we observe spectral convergence to about
    5-digits of accuracy
    %\note{Mike: usually spectral convergence is
    %  reported to high accuracies; can we say something else here
    %  instead? \\ Dhairya: I'll add another couple of rows to the table at higher 
    %  resolution results to show convergence to higher accuracies}.
      We also report the total number of \abbrev{GMRES}
    iterations $(\Nsurf+1)~\gmresiter$ and the total solve time
    $\Tsolve$ on 60-CPU cores.  We observe that problems with larger
    $\BeltramiParam$ require a larger number of \abbrev{GMRES}
    iterations and proportionally longer solve times.  This is well-known
    in wave propagation problems, and can be attributed to the
    spectrum of the boundary integral operator $\linop{A}$ slightly
    de-clustering as $\BeltramiParam$ increases.

%\afterpage{%
    
    \begin{table}[t] %<<<
      \centering
      \begin{tabular}{r r r | r r | r r r r} %<<<
        \hline %------------------------
        $\BeltramiParam$ & $\Nunknown$ & $\gmrestol$ &  $(\Nsurf+1)~\gmresiter$ &  $\norm[\infty]{e}$ &  $\Tsetup$ &     $\Tlb$ & $\Tquadeval$ &  $\Tsolve$ \\
        \hline %------------------------
        %                & $9.8\pexp2$ & $1.0\nexp0$ &                     $ 2$ &         $6.9\nexp1$ &     $ 0.1$ &    $  2.0$ &      $  0.0$ &   $   2.1$ \\
        %                & $3.9\pexp3$ & $2.1\nexp2$ &                     $20$ &         $9.4\nexp2$ &     $ 0.6$ &    $  2.9$ &      $  1.0$ &   $   4.5$ \\
                         & $8.8\pexp3$ & $9.6\nexp3$ &                     $24$ &         $2.6\nexp2$ &     $ 1.6$ &    $  8.2$ &      $  4.4$ &   $  14.3$ \\
        %                & $1.6\pexp4$ & $2.9\nexp3$ &                     $29$ &         $7.3\nexp3$ &     $ 3.5$ &    $ 18.0$ &      $ 13.8$ &   $  35.4$ \\
                         & $2.5\pexp4$ & $6.5\nexp4$ &                     $33$ &         $2.2\nexp3$ &     $ 7.0$ &    $ 39.4$ &      $ 36.9$ &   $  83.2$ \\
        %                & $3.5\pexp4$ & $2.8\nexp4$ &                     $37$ &         $8.1\nexp4$ &     $11.5$ &    $ 63.4$ &      $ 79.0$ &   $ 153.9$ \\
                         & $4.8\pexp4$ & $1.5\nexp4$ &                     $38$ &         $3.8\nexp4$ &     $15.8$ &    $101.4$ &      $143.9$ &   $ 261.0$ \\
        %                & $6.3\pexp4$ & $4.8\nexp5$ &                     $41$ &         $1.6\nexp4$ &     $24.8$ &    $148.1$ &      $266.3$ &   $ 439.2$ \\
        $0.5$            & $7.9\pexp4$ & $1.5\nexp5$ &                     $43$ &         $4.9\nexp5$ &     $31.7$ &    $256.9$ &      $434.2$ &   $ 722.7$ \\
                         & $9.8\pexp4$ & $3.6\nexp6$ &                     $46$ &         $2.2\nexp5$ &     $44.7$ &    $342.9$ &      $693.8$ &   $1081.4$ \\
                         & $2.2\pexp5$ & $6.8\nexp8$ &                     $56$ &         $5.3\nexp7$ &    $148.1$ &   $1314.8$ &     $4074.9$ &   $5537.8$ \\
                         & $3.9\pexp5$ & $4.0\nexp9$ &                     $61$ &         $3.1\nexp8$ &    $366.2$ &   $2803.7$ &    $13718.0$ &  $16889.0$ \\
                         & $6.1\pexp5$ & $1.0\nexp9$ &                     $66$ &         $4.4\nexp9$ &    $704.9$ &   $6242.9$ &    $35729.0$ &  $42676.5$ \\
        \hline %------------------------
        %                & $9.8\pexp2$ & $1.0\nexp0$ &                     $ 2$ &         $8.0\nexp1$ &     $ 0.1$ &    $  2.2$ &      $  0.3$ &   $   2.7$ \\
        %                & $3.9\pexp3$ & $2.1\nexp2$ &                     $29$ &         $1.2\nexp1$ &     $ 0.6$ &    $  4.0$ &      $  1.3$ &   $   5.9$ \\
                         & $8.8\pexp3$ & $9.6\nexp3$ &                     $34$ &         $3.3\nexp2$ &     $ 1.6$ &    $ 11.0$ &      $  5.9$ &   $  18.5$ \\
        %                & $1.6\pexp4$ & $2.9\nexp3$ &                     $40$ &         $1.1\nexp2$ &     $ 3.5$ &    $ 23.0$ &      $ 18.0$ &   $  44.5$ \\
                         & $2.5\pexp4$ & $6.5\nexp4$ &                     $45$ &         $2.5\nexp3$ &     $ 6.8$ &    $ 53.2$ &      $ 48.9$ &   $ 109.0$ \\
        %                & $3.5\pexp4$ & $2.8\nexp4$ &                     $48$ &         $1.0\nexp3$ &     $11.6$ &    $ 82.9$ &      $100.1$ &   $ 194.6$ \\
                         & $4.8\pexp4$ & $1.5\nexp4$ &                     $50$ &         $4.3\nexp4$ &     $15.7$ &    $135.5$ &      $184.5$ &   $ 335.7$ \\
        %                & $6.3\pexp4$ & $4.8\nexp5$ &                     $53$ &         $1.5\nexp4$ &     $25.2$ &    $204.6$ &      $335.2$ &   $ 565.0$ \\
        $1.0$            & $7.9\pexp4$ & $1.5\nexp5$ &                     $58$ &         $6.2\nexp5$ &     $31.6$ &    $355.4$ &      $568.3$ &   $ 955.3$ \\
                         & $9.8\pexp4$ & $3.6\nexp6$ &                     $63$ &         $2.3\nexp5$ &     $44.1$ &    $478.7$ &      $921.3$ &   $1444.0$ \\
                         & $2.2\pexp5$ & $6.8\nexp8$ &                     $78$ &         $5.5\nexp7$ &    $148.1$ &   $1861.1$ &     $5545.8$ &   $7555.0$ \\
                         & $3.9\pexp5$ & $4.0\nexp9$ &                     $85$ &         $2.9\nexp8$ &    $366.2$ &   $3894.2$ &    $18597.0$ &  $22857.4$ \\
                         & $6.1\pexp5$ & $1.0\nexp9$ &                     $88$ &         $6.5\nexp9$ &    $704.9$ &   $8381.8$ &    $46458.0$ &  $55544.7$ \\
        \hline %------------------------

      \end{tabular} %>>>
      \caption{\label{t:conv-taylor0}
        Convergence results for the geometry and reference solution in \pr{f:taylor-result} (left).
        We present results for two different values of the Beltrami parameter $\BeltramiParam=0.5$ and $\BeltramiParam=1$.
        We observe spectral convergence in the error $\norm[\infty]{e}$ with increasing mesh-refinement $\Nunknown$ as we also reduce the \abbrev{GMRES} tolerance $\gmrestol$ and
        correspondingly increase the quadrature accuracy (using
        parameters $\patchdim$ and $\quadorder$).
        We also report the total number of \abbrev{GMRES} iterations $(\Nsurf+1)~\gmresiter$ and the breakdown of the total solve time $\Tsolve \approx \Tsetup + \Tlb + \Tquadeval$ on 60-CPU cores.
      }
    \end{table} %>>>
  %>>>

    \begin{table}[b] %<<< $\BeltramiParam = 1.0$
      \centering
      \caption{\label{t:conv-taylor1} Convergence results for our
        Taylor state solver on a toroidal-shell geometry visualized in
        \pr{f:taylor-result} (right).  We show convergence in
        $\norm[\infty]{e}$ as we decrease the \abbrev{GMRES} tolerance
        $\gmrestol$, increase the mesh refinement $\Nunknown$ and
        correspondingly increase the quadrature accuracy (using
        parameters $\patchdim$ and $\quadorder$).  We also report the
        number of \abbrev{GMRES} iterations $(\Nsurf+1)~\gmresiter$
        and the breakdown of the total solve time $\Tsolve \approx \Tsetup + \Tlb + \Tquadeval$ on 60-CPU cores.  }
      \begin{tabular}{r r c r r | r r r r} %<<<
        \hline %-------------------------
         $\Nunknown$ &  $\gmrestol$ &~~& $(\Nsurf+1)~\gmresiter$ &  $\norm[\infty]{e}$ &    $\Tsetup$ &     $\Tlb$ & $\Tquadeval$ &  $\Tsolve$ \\
        \hline %-------------------------
         $2.0\pexp3$ &  $2.0\nexp3$ &~~&                   $ 78$ &         $7.3\nexp3$ &       $ 0.1$ &    $  6.8$ &      $  0.9$ &    $  7.9$ \\
         $8.1\pexp3$ &  $1.3\nexp5$ &~~&                   $104$ &         $5.1\nexp5$ &       $ 1.2$ &    $ 33.7$ &      $ 10.6$ &    $ 45.6$ \\
         $1.8\pexp4$ &  $3.0\nexp7$ &~~&                   $119$ &         $1.4\nexp6$ &       $ 5.9$ &    $111.8$ &      $ 53.7$ &    $171.4$ \\
         $3.2\pexp4$ &  $3.2\nexp8$ &~~&                   $129$ &         $1.1\nexp7$ &       $13.7$ &    $239.7$ &      $173.2$ &    $426.6$ \\
         $5.1\pexp4$ &  $2.6\nexp9$ &~~&                   $139$ &         $1.4\nexp8$ &       $26.4$ &    $552.6$ &      $419.7$ &    $998.8$ \\
        \hline %-------------------------

      \end{tabular} %>>>
    \end{table} %>>>

%\clearpage
%}

    \subsubsection{Toroidal-shell domain\label{sss:shell-conv}} %<<<
    We now present convergence results for the geometry on the right in \pr{f:taylor-result}.
    The domain $\Domain$ is the region between the two toroidal surfaces, each of which is the surface of revolution of a rotating ellipse described by the following equation in cylindrical coordinates,
    \begin{equation}
      \begin{bmatrix}
        R(\theta,\phi) \\
        Z(\theta, \phi) \\
      \end{bmatrix}
      =
      \begin{bmatrix}
        R_0 \\
        0
      \end{bmatrix}
      +
      \begin{bmatrix}
        \cos{\frac{3}{2}\theta} & -\sin{\frac{3}{2}\theta} \\
        \sin{\frac{3}{2}\theta} & \cos{\frac{3}{2}\theta} \\
      \end{bmatrix}
      \begin{bmatrix}
        a & 0 \\
        0 & b
      \end{bmatrix}
      \begin{bmatrix}
        \cos{\frac{3}{2}\theta} & \sin{\frac{3}{2}\theta} \\
        -\sin{\frac{3}{2}\theta} & \cos{\frac{3}{2}\theta} \\
      \end{bmatrix}
      \begin{bmatrix}
        \cos{\phi} \\
        \sin{\phi}
      \end{bmatrix}.
      \label{e:rotating-ellipse}
    \end{equation}
    The outer surface is described by \pr{e:rotating-ellipse} with $R_0=2.0$, $a=0.7$ and $b=1.0$.
    Similarly, the inner surface is described by \pr{e:rotating-ellipse} with $R_0=2.0$, $a=0.3$ and $b=0.55$.
    %\color{blue}Dhairya, HOW DID YOU CONSTRUCT THE TWO SURFACES IN THIS CASE? \color{black}
    As in the previous case, the reference solution $\vector{B_0}$ is generated by a generalized Debye current loop around the $\Domain$ and is evaluated using \pr{e:taylor-reference} and trapezoidal quadrature rule.
    We evaluate $\dotprod{\vector{B_0}}{\Normal}$ on the domain boundary $\Boundary$.
    In this setup, we require two flux constraints;
    the toroidal flux $\Flux^{tor}_{\vector{B_0}}$ and
    the poloidal flux $\Flux^{pol}_{\vector{B_0}}$.
    We setup the discretized boundary integral problem for the solution $\vector{B}$ in the following block form,
    \begin{align*}
      \begin{bmatrix}
        \linop{A}      & \vector{u_1} & \vector{u_2} \\
        \vector{v_1}^T &       c_{11} &       c_{12} \\
        \vector{v_2}^T &       c_{21} &       c_{22} \\
      \end{bmatrix}
      \begin{bmatrix}
        \vector{\sigma} \\
        \alpha \\
        \beta \\
      \end{bmatrix}
      =
      \begin{bmatrix}
        \dotprod{\vector{B_0}}{\Normal} \\
        \Flux^{tor}_{\vector{B_0}} \\
        \Flux^{pol}_{\vector{B_0}} \\
      \end{bmatrix}
      .
    \end{align*}
    We then solve this linear system for the unknowns $\vector{\sigma}$ and $\phi$ as follows,
    \begin{align*}
      \begin{bmatrix}
        \alpha \\
        \beta \\
      \end{bmatrix}
           &= \left(
                    \begin{bmatrix}
                      \vector{v_1}^T \\
                      \vector{v_2}^T \\
                    \end{bmatrix}
                    \begin{bmatrix}
                      \vector{d_1} & \vector{d_2}
                    \end{bmatrix}
                    +
                    \begin{bmatrix}
                      c_{11} & c_{12} \\
                      c_{21} & c_{22} \\
                    \end{bmatrix}
              \right)^{-1}
              \left(
                    \begin{bmatrix}
                      \Flux^{tor}_{\vector{B_0}} \\
                      \Flux^{pol}_{\vector{B_0}} \\
                    \end{bmatrix}
                    -
                    \begin{bmatrix}
                      \vector{v_1}^T \\
                      \vector{v_2}^T \\
                    \end{bmatrix}
                    \vector{w}
              \right) \\
      \vector{\sigma} &= \vector{w}
                -
                \begin{bmatrix}
                  \vector{d_1} & \vector{d_2}
                \end{bmatrix}
                \begin{bmatrix}
                  \alpha \\
                  \beta \\
                \end{bmatrix}
    \end{align*}
    where
    $\vector{w} = \linop{A}^{-1} \dotprod{\vector{B_0}}{\Normal}$ and
    $\begin{bmatrix} \vector{d_1} & \vector{d_2} \end{bmatrix} =
    \linop{A}^{-1} \begin{bmatrix} \vector{u_1} &
      \vector{u_2} \end{bmatrix}$.  The numerical results in
    \pr{t:conv-taylor1} show convergence in $\norm[\infty]{e}$ to
    about 8-digits as we decrease the \abbrev{GMRES} tolerance
    $\gmrestol$ and increase the mesh refinement $\Nunknown$.  We also
    report the total number of \abbrev{GMRES} iterations
    $(\Nsurf+1)~\gmresiter$ and the total solve time $\Tsolve$ on
    60-CPU cores.

  %>>>

  %TODO: discuss solve times in more detail and possibly give breakdown of time spent in different stages and OpenMP scalability.
%>>>

\subsection{Comparison with SPEC} %<<<
In \pr{t:conv-spec}, we compare our solver with the \abbrev{SPEC} code
of \cite{Loizu2016Verification}.  We present results for the geometry
corresponding to the plasma boundary in the Wendelstein 7-X (W7-X)
stellarator~\cite{Beidler1990,Pedersen2017}.  We constructed reference
solutions for $\BeltramiParam = 0$ and $\BeltramiParam=1$ with our BIE
code using very fine discretization ($\Nunknown=8.2\pexp5$,
$\gmrestol=1\nexp12$).  The two reference solutions are visualized in
\pr{f:conv-spec}.  We show convergence for both codes as we increase
mesh refinement and report the relative $L^{\infty}$-norm of the error
compared to the reference solutions on the domain boundary.  We also
compare the total solve time for both codes on 60-CPU cores.  For the
same solution accuracy our method is significantly faster than the
\abbrev{SPEC} code and at least an order of magnitude faster for the
vacuum field case ($\BeltramiParam=0$). This performance is all the
more noteworthy as our solver presently does not take advantage of the
5-fold rotational symmetry about the vertical axis of W7-X, unlike the
SPEC code. Note that we have not been able to get results for higher
refinement in the SPEC code because the system became
ill-conditioned. This is an advantage of our integral formulation,
which leads to linear systems which are as well-conditioned as the
underlying physical problem.

%\afterpage{%

  \begin{table}[h] %<<< $\BeltramiParam = 1.0$
    \centering
    \caption{\label{t:conv-spec}
      Convergence results for our BIE code and the SPEC code on the W7-X geometry~\cite{Beidler1990,Pedersen2017}.
      We show convergence in the relative $L^{\infty}$-norm of the error by comparing the solutions with a reference solution.
      We also report the total solve time $\Tsolve$ for both codes on 60-CPU cores.
      The parameters $M_{pol}$, $M_{tor}$ and $L_{rad}$ determine the refinement in the poloidal, toroidal and the radial directions in SPEC.
      For $\BeltramiParam = 0$, we use the scheme for vacuum fields described in \pr{sss:vacuum-formulation}.
      This has the advantage that it does not require Laplace-Beltrami solves and only requires computing surface convolutions with the Laplace gradient kernel, which is significantly less expensive than computing convolutions with the Helmholtz kernel. We have not been able to obtain results for higher refinement with the SPEC code, because the system became ill-conditioned.
    }
    \begin{tabular}{r | r r r r | r r r} %<<<
      \hline %-------------------------
      $ $               &                                   \multicolumn{4}{c|}{BIE method} &                                   \multicolumn{3}{c}{\abbrev{SPEC} code} \\
      $\BeltramiParam$ & $\Nunknown$ &  $\gmrestol$ &  $\norm[\infty]{e}$ &    $\Tsolve$  & $M_{pol} \times M_{tor} \times L_{rad}$ & $\norm[\infty]{e}$ & $\Tsolve$ \\
      \hline %-------------------------
                %$   $ & $9.8\pexp2$ &   $1\nexp00$ &         $1.6\nexp0$ &     $   0.3$  &                                         &                   &            \\
                %$   $ & $3.9\pexp3$ &   $1\nexp01$ &         $7.5\nexp1$ &     $   0.1$  &                                         &                   &            \\
                 $   $ & $8.8\pexp3$ &   $3\nexp02$ &         $2.3\nexp1$ &     $   0.4$  &                 $11 \times 11 \times 5$ &       $3.3\nexp1$ &       $14$ \\
                %$   $ & $1.6\pexp4$ &   $1\nexp02$ &         $1.2\nexp1$ &     $   1.2$  &                                         &                   &            \\
                 $   $ & $2.5\pexp4$ &   $3\nexp03$ &         $3.1\nexp2$ &     $   2.1$  &                 $13 \times 13 \times 5$ &       $6.0\nexp2$ &       $38$ \\
                 $   $ & $3.5\pexp4$ &   $1\nexp03$ &         $6.6\nexp3$ &     $   4.3$  &                 $15 \times 15 \times 5$ &       $7.9\nexp3$ &      $115$ \\
                %$   $ & $4.8\pexp4$ &   $3\nexp04$ &         $3.5\nexp3$ &     $   6.4$  &                                         &                   &            \\
                 $   $ & $6.3\pexp4$ &   $3\nexp04$ &         $1.6\nexp3$ &     $  11.3$  &                 $21 \times 21 \times 5$ &       $1.6\nexp3$ &      $527$ \\
                 $0.0$ & $7.9\pexp4$ &   $1\nexp04$ &         $5.9\nexp4$ &     $  19.5$  &                                       - &                 - &          - \\
                %$   $ & $9.8\pexp4$ &   $3\nexp05$ &         $1.6\nexp4$ &     $  27.3$  &                                         &                   &            \\
                %$   $ & $1.2\pexp5$ &   $1\nexp05$ &         $1.1\nexp4$ &     $  36.1$  &                                         &                   &            \\
                %$   $ & $1.4\pexp5$ &   $3\nexp06$ &         $6.2\nexp5$ &     $  53.9$  &                                         &                   &            \\
                %$   $ & $1.7\pexp5$ &   $3\nexp06$ &         $4.7\nexp5$ &     $  67.9$  &                                         &                   &            \\
                 $   $ & $1.9\pexp5$ &   $1\nexp06$ &         $9.4\nexp6$ &     $  98.1$  &                                       - &                 - &          - \\
                %$   $ & $2.2\pexp5$ &   $1\nexp06$ &         $6.1\nexp6$ &     $ 119.0$  &                                         &                   &            \\
                %$   $ & $2.5\pexp5$ &   $1\nexp07$ &         $2.9\nexp6$ &     $ 147.2$  &                                         &                   &            \\
                %$   $ & $2.8\pexp5$ &   $1\nexp07$ &         $2.0\nexp6$ &     $ 197.6$  &                                         &                   &            \\
                %$   $ & $3.2\pexp5$ &   $1\nexp07$ &         $1.4\nexp6$ &     $ 229.0$  &                                         &                   &            \\
                 $   $ & $3.5\pexp5$ &   $3\nexp08$ &         $6.7\nexp7$ &     $ 302.3$  &                                       - &                 - &          - \\
                %$   $ & $3.9\pexp5$ &   $3\nexp08$ &         $3.5\nexp7$ &     $ 355.4$  &                                         &                   &            \\
                %$   $ & $4.3\pexp5$ &   $1\nexp08$ &         $1.4\nexp7$ &     $ 428.1$  &                                         &                   &            \\
                 $   $ & $4.7\pexp5$ &   $1\nexp08$ &         $9.0\nexp8$ &     $ 491.8$  &                                       - &                 - &          - \\
                %$   $ & $5.2\pexp5$ &   $1\nexp08$ &         $7.6\nexp8$ &     $ 607.7$  &                                         &                   &            \\
                %$   $ & $5.6\pexp5$ &   $3\nexp09$ &         $4.9\nexp8$ &     $ 723.6$  &                                         &                   &            \\
                %$   $ & $6.1\pexp5$ &   $3\nexp09$ &         $3.6\nexp8$ &     $ 826.4$  &                                         &                   &            \\
                %$   $ & $6.6\pexp5$ &   $1\nexp09$ &         $1.7\nexp8$ &     $1008.1$  &                                         &                   &            \\
                %$   $ & $7.1\pexp5$ &   $1\nexp09$ &         $6.5\nexp9$ &     $1184.2$  &                                         &                   &            \\
                %$   $ & $7.7\pexp5$ &   $1\nexp09$ &         $5.2\nexp9$ &     $1309.1$  &                                         &                   &            \\
                 $   $ & $8.2\pexp5$ &   $1\nexp10$ &         $2.0\nexp9$ &     $1651.3$  &                                       - &                 - &          - \\
      \hline %-------------------------
                %$   $ & $9.8\pexp2$ &   $1\nexp00$ &         $5.9\nexp1$ &    $    0.1$  &                                         &                   &            \\
                 $   $ & $3.9\pexp3$ &   $1\nexp01$ &         $2.2\nexp1$ &    $    1.0$  &                 $11 \times 11 \times 5$ &       $3.3\nexp1$ &       $13$ \\
                %$   $ & $8.8\pexp3$ &   $3\nexp02$ &         $6.3\nexp2$ &    $    4.4$  &                                         &                   &            \\
                 $   $ & $1.6\pexp4$ &   $1\nexp02$ &         $2.6\nexp2$ &    $   12.2$  &                 $13 \times 13 \times 5$ &       $6.1\nexp2$ &       $38$ \\
                 $   $ & $2.5\pexp4$ &   $3\nexp03$ &         $7.4\nexp3$ &    $   29.8$  &                 $15 \times 15 \times 5$ &       $9.1\nexp3$ &      $118$ \\
                 $   $ & $3.5\pexp4$ &   $1\nexp03$ &         $2.7\nexp3$ &    $   61.6$  &                 $19 \times 19 \times 5$ &       $3.3\nexp3$ &      $389$ \\
                 $   $ & $4.8\pexp4$ &   $3\nexp04$ &         $1.2\nexp3$ &    $  117.1$  &                 $21 \times 21 \times 5$ &       $2.0\nexp3$ &      $541$ \\
                %$   $ & $6.3\pexp4$ &   $3\nexp04$ &         $1.0\nexp3$ &    $  183.6$  &                                         &                   &            \\
                 $1.0$ & $7.9\pexp4$ &   $1\nexp04$ &         $2.3\nexp4$ &    $  317.9$  &                 $23 \times 23 \times 5$ &       $8.5\nexp4$ &      $879$ \\
                %$   $ & $9.8\pexp4$ &   $3\nexp05$ &         $1.8\nexp4$ &    $  492.1$  &                                         &                   &            \\
                %$   $ & $1.2\pexp5$ &   $1\nexp05$ &         $7.9\nexp5$ &    $  728.5$  &                                         &                   &            \\
                %$   $ & $1.4\pexp5$ &   $3\nexp06$ &         $2.7\nexp5$ &    $ 1064.9$  &                                         &                   &            \\
                %$   $ & $1.7\pexp5$ &   $3\nexp06$ &         $2.2\nexp5$ &    $ 1453.2$  &                                         &                   &            \\
                %$   $ & $1.7\pexp5$ &   $3\nexp07$ &         $1.1\nexp5$ &    $ 1616.2$  &                                         &                   &            \\
                 $   $ & $1.9\pexp5$ &   $1\nexp06$ &         $8.6\nexp6$ &    $ 1991.0$  &                                       - &                 - &          - \\
                %$   $ & $2.2\pexp5$ &   $1\nexp06$ &         $6.6\nexp6$ &    $ 2600.9$  &                                         &                   &            \\
                %$   $ & $2.2\pexp5$ &   $1\nexp07$ &         $1.0\nexp6$ &    $ 2860.9$  &                                         &                   &            \\
                 $   $ & $2.5\pexp5$ &   $1\nexp07$ &         $9.0\nexp7$ &    $ 3479.6$  &                                       - &                 - &          - \\
                %$   $ & $3.2\pexp5$ &   $1\nexp08$ &         $3.7\nexp7$ &    $ 6054.1$  &                                         &                   &            \\
                %$   $ & $4.3\pexp5$ &   $1\nexp08$ &         $1.1\nexp7$ &    $10865.5$  &                                         &                   &            \\
                 $   $ & $5.2\pexp5$ &   $1\nexp09$ &         $3.0\nexp8$ &    $16763.3$  &                                       - &                 - &          - \\
                %$   $ & $6.1\pexp5$ &   $3\nexp10$ &         $9.6\nexp9$ &    $23771.7$  &                                         &                   &            \\
                 $   $ & $7.7\pexp5$ &   $1\nexp10$ &         $1.7\nexp9$ &    $40646.2$  &                                       - &                 - &          - \\
      \hline %-------------------------
    \end{tabular} %>>>
  \end{table} %>>>
%>>>
  
%\clearpage
%}

  % SPEC solves only on 1/5th of the domain due to the 5x periodicity of this geometry while we don't make such assumptions.
  % For a geometry which has no symmetry, the SPEC code would be another 5x slower.
  \begin{figure}[t] %<<<
    \centering
    \includegraphics[width=0.49\textwidth]{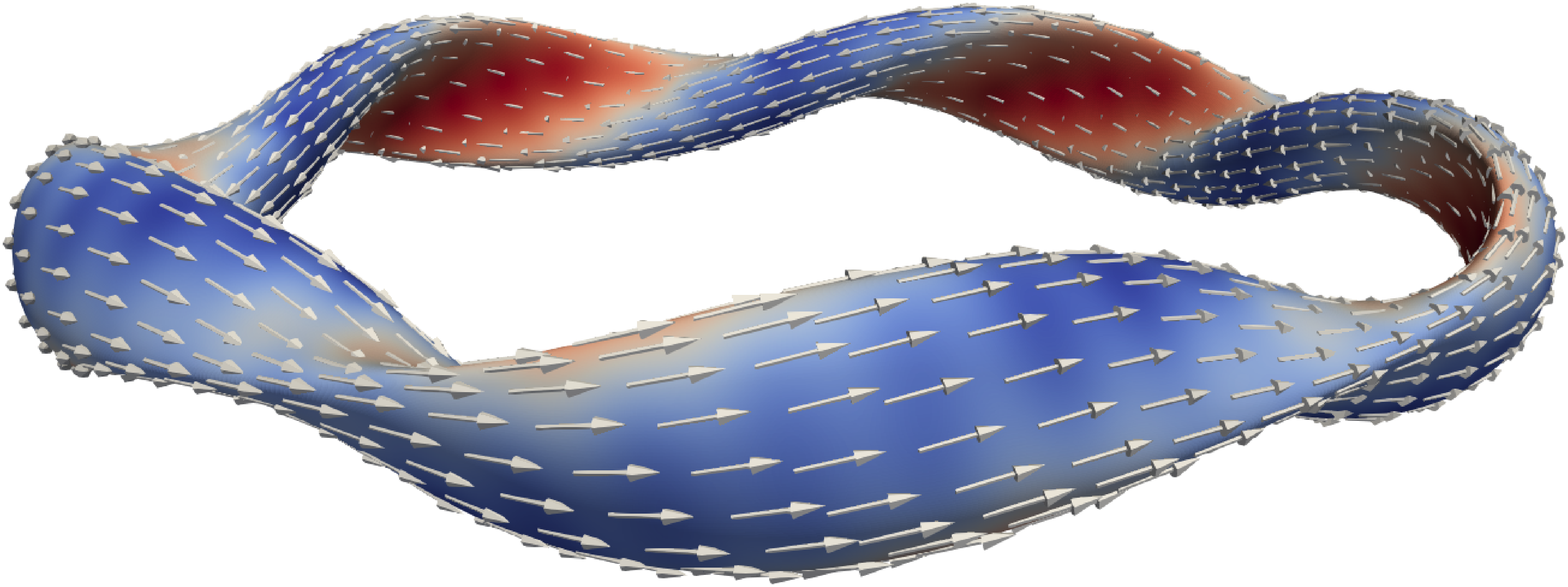}
    \includegraphics[width=0.49\textwidth]{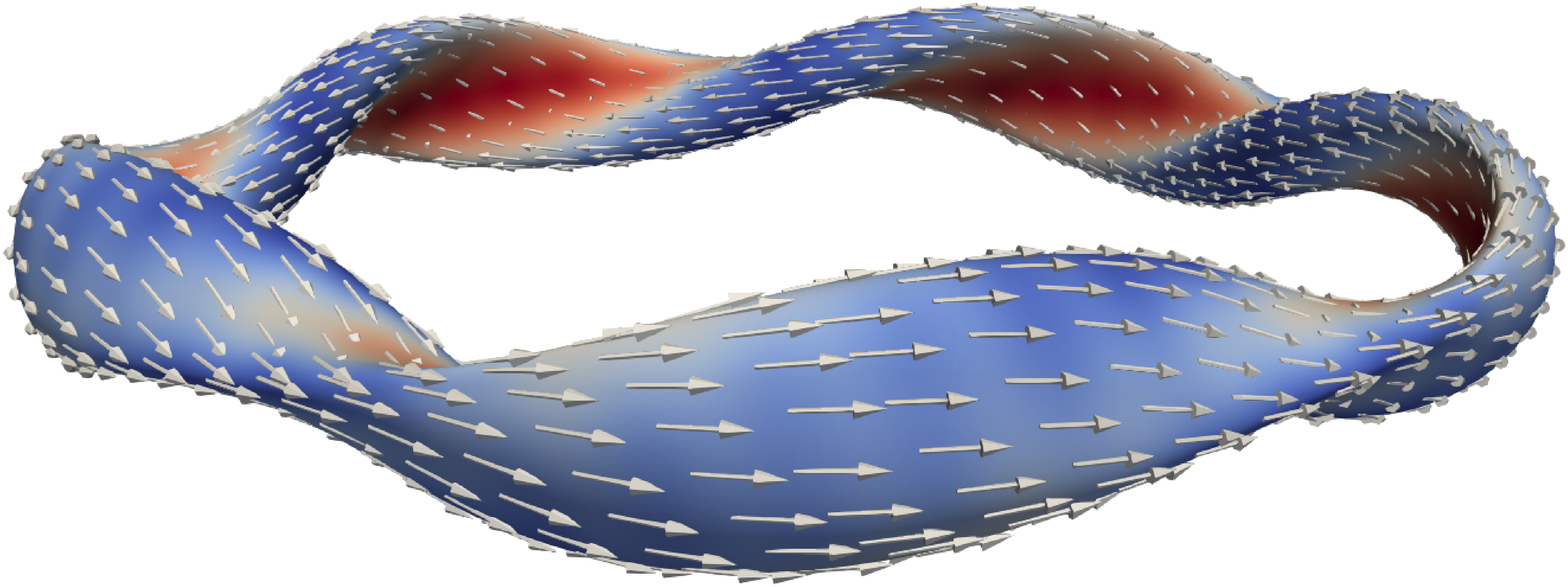}
    \caption{\label{f:conv-spec}
      Reference solutions on the W7X geometry~\cite{Beidler1990,Pedersen2017} for the convergence results in \pr{t:conv-spec}.
      The fields correspond to $\BeltramiParam=0$ and $\BeltramiParam=1$ respectively.
    }
  \end{figure} %>>>

\section{Summary}\label{s:summary}
In this work, we have developed a solver for computing Taylor relaxed
states in toroidal geometries.  This extends our previous work in
\cite{O_Neil_2018_Taylor} on computing such fields in axisymmetric
geometries to non-axisymmetric geometries, and can be applied to the
computation of stepped-pressure equilibria in
stellarators~\cite{Hudson2012}.  The representation of the field using
generalized Debye sources~\cite{Epstein_2012} remains the same as in
the axisymmetric case, but the numerical solver is new.
The solver is based on a Fourier pseudo-spectral discretization of the
surface.  We have developed efficient high-order surface quadratures
for evaluating layer-potentials based on previous work of
\cite{Bruno_2001a, Ying_2006}, but improve on the partition of unity
function used to separate the integrals into smooth and singular
parts.  Our method allows for precomputing the local quadrature
resulting in extremely efficient evaluations when used with iterative
solvers.  We have incorporated other performance optimizations such as
shared memory parallelism and vectorization of the kernel functions.
We have developed optimized implementations of the Laplace-Beltrami
solvers from~\cite{Imbert_G_rard_2017, O_Neil_2018_LaplaceBeltrami},
and have also presented a numerically stable scheme for computing
magnetic fluxes for arbitrary values of the Beltrami parameter
$\lambda$ which are critical for our formulation.  Finally, we have
presented numerical results to show that our method is efficient and
high-order accurate.

Since our boundary integral scheme requires only that the boundary of
the domain be discretized, and not the entire volume, this
significantly reduces the number of unknowns and makes the scheme very
efficient. A direct comparison with the existing code SPEC shows that
our solver compares favorably, and could be further accelerated, for
example, by making use of the usual $n$-fold rotational symmetry of
stellarator surfaces about the vertical axis as SPEC does (3-fold
symmetry for QAS3, 5-fold symmetry for W7-X).  We furthermore observe
that a boundary integral approach is in fact particularly well-suited
for the computation of stepped pressure equilibria because the force
balance conditions between adjacent Taylor states need only be
evaluated at the ideal MHD interfaces on each iteration, i.e on the
boundaries of the computational domain.  Other advantages of our
solver include a well-conditioned second-kind integral formulation and
no difficulties associated with coordinate singularities (as is common
in many volume parameterizations of toroidal domains required in FEM
codes).  A current limitation of our scheme is that the uniform grid
discretization makes it unsuitable for geometries with very sharp
features.  Such features are frequently required in stellarators to
model what are known as \emph{divertors}, used for removing fusion
products and impurities in the plasma.  Computing Taylor states
efficiently in such geometries will require high-order adaptive
surface discretizations, alternative singular and near-singular
quadratures, more efficient Laplace-Beltrami solvers compatible with
such discretizations, and coupling with asymptotically fast solvers,
such as fast multipole methods. This is the basis of ongoing research.

\section*{Ackowledgements}
The authors would like to thank Stuart Hudson (Princeton Plasma Physics Laboratory) and Joaquim Loizu (Ecole Polytechnique F\'ed\'erale de Lausanne) for valuable conversations, for their explanations on how to run the SPEC code, and for sharing their post-processing routines. 

%Future work:
%near-singular integration for when surfaces come close;
%adaptive meshes; and
%FMM acceleration.

\appendix
\section{Notation}

In \pr{t:notation}, we list some frequently used symbols for easy
reference.

\begin{table}[ht!]
  \centering
  \caption{\label{t:notation} Index of frequently used symbols.}
  \resizebox{\textwidth}{!}{
  \begin{tabular}{l l}
    \toprule %--------------------------------------------------------------------------------------------
    Symbol                                                    & Description \\
    \midrule %--------------------------------------------------------------------------------------------
    \Domain                                                   & domain \\
    \Boundary                                                 & boundary \\
    \Nsurf                                                    & number of surfaces \\
    \TAngle, \PAngle                                          & toroidal and poloidal angles \\
    \SCoord(\TAngle, \PAngle)                                 & surface position \\
    \Normal(\TAngle, \PAngle)                                 & surface normal (outward from $\Domain$) \\
    \MetricTensor(\TAngle, \PAngle)                           & metric tensor \\
    \midrule %--------------------------------------------------------------------------------------------
    \vector{B}                                                & magnetic field \\
    \BeltramiParam                                            & Beltrami parameter \\
    $\CrossSection_i$, $\Flux_i$                              & domain cross-section and the \\
                                                              & corresponding flux condition \\
    \midrule %--------------------------------------------------------------------------------------------
    \Nt, \Np                                                  & number of discretization points \\
                                                              & in toroidal and poloidal directions \\
    $f(\TAngle,\PAngle)$, \vectord{f}, \fourier{\vectord{f}}  & function on a surface, its \\
                                                              & discretization and its Fourier \\
                                                              & coefficients \\
    \bottomrule %---------------------------------------------------------------
  \end{tabular}
  \begin{tabular}{l l}
    \toprule %--------------------------------------------------------------------------------------------
    Symbol                                                    & Description \\
    \midrule %--------------------------------------------------------------------------------------------
    \FFT, \IFFT                                               &periodic Fourier transform and its inverse \\
    \fftdiff{}{\TAngle}, \fftdiff{}{\PAngle}                  & pseudo-spectral differentiation \\
                                                              & operators in $\TAngle$ and $\PAngle$ directions \\
    %\SurfGrad{f}                                              & surface gradient operator \\
    %\SurfDiv{\vector{v}}                                      & surface divergence operator \\
    %\SurfCurl{\vector{v}}                                     & surface curl operator \\
    %\SurfLap{\vector{f}}                                      & surface Laplacian operator \\
    %\InvSurfLap{\vector{f}}                                   & inverse surface Laplacian \\
    \midrule %--------------------------------------------------------------------------------------------
    \HelmKer{\BeltramiParam}(\vector{r})                      & Helmholtz kernel function \\
    \SL{\BeltramiParam}, \DL{\BeltramiParam}                  & Helmholtz single- and double-layer \\
                                                              & convolution operators \\
    \patchdim, \quadorder                                     & singular-quadrature parameters \\
    %\pou(\TAngle,\PAngle)                                    & partition-of-unity function \\
    \midrule %--------------------------------------------------------------------------------------------
    \Nunknown                                                 & total number of discretization points \\
    \lbiter                                                   & average number of iterations for each \\
                                                              & solve of the Laplace Beltrami problem \\
    \gmresiter                                                & average number of iterations for each \\
                                                              & solve of the boundary integral operator \\
    %\Time                                                    & wall-time \\
    \Tsolve                                                   & total solve time \\
    \Tsetup                                                   & setup time for singluar-quadratures \\
    \Tquadeval                                                & evaluation time for the quadratures \\
    %\Tsmooth, \Tsingular                                      & evaluation time for the smooth and singular \\
    %                                                          & parts of the integrals \\
    \Tlb                                                      & Laplace Beltrami solve time \\
    %
    %\midrule %------------------------------------------------------------------
    %\AreaElem                            & area element \\
    %\LengthElem                          & length element \\
    \bottomrule %---------------------------------------------------------------
  \end{tabular}
  }
\end{table}

\bibliographystyle{abbrvnat}
\bibliography{ref}

\end{document}